\numberwithin{equation}{section}
\theoremstyle{plain}
\newtheorem{theorem}{Theorem}
\newtheorem{lemma}[theorem]{Lemma}
\newtheorem{remark}[theorem]{Remark}
\newtheorem{assumption}[theorem]{Assumption}
\begin{document}

\begin{center}
  \Large \bf Some asymptotic results for nonlinear Hawkes processes
\end{center}

\author{}
\begin{center}
{Fuqing Gao}\,\footnote{School of Mathematics and Statistics, Wuhan University, Wuhan 430072, People's Republic of China; fqgao@whu.edu.cn},
  Lingjiong Zhu\,\footnote{Department of Mathematics, Florida State University, 1017 Academic Way, Tallahassee, FL-32306, United States of America; zhu@math.fsu.edu.
  }
\end{center}

\begin{center}
 \today
\end{center}

\begin{abstract}
Hawkes process is a class of simple point processes with self-exciting
and clustering properties. Hawkes process has been widely applied 
in finance, neuroscience, social networks, criminology, seismology, and many
other fields. In this paper, we study fluctuations, large deviations and moderate deviations
nonlinear Hawkes processes in a new asymptotic regime, the large intensity function and the small exciting function regime. 
It corresponds to the large baseline intensity asymptotics for the linear case, 
and can also be interpreted as the asymptotics for the mean process of Hawkes processes on a large network. 
\end{abstract}

\section{Introduction}

Let $N$ be a simple point process on $\mathbb{R}$ and let $\mathcal{F}^{-\infty}_{t}:=\sigma(N(C),C\in\mathcal{B}(\mathbb{R}), C\subset(-\infty,t])$ be
an increasing family of $\sigma$-algebras. Any nonnegative $\mathcal{F}^{-\infty}_{t}$-progressively measurable process $\lambda_{t}$ with
\begin{equation*}
\mathbb{E}\left[N(a,b]|\mathcal{F}^{-\infty}_{a}\right]=\mathbb{E}\left[\int_{a}^{b}\lambda_{s}ds\big|\mathcal{F}^{-\infty}_{a}\right],
\end{equation*}
a.s. for all intervals $(a,b]$ is called an $\mathcal{F}^{-\infty}_{t}$-intensity of $N$. We use the notation $N_{t}:=N(0,t]$ to denote the number of
points in the interval $(0,t]$.

A Hawkes process is a simple point process $N$ admitting an $\mathcal{F}^{-\infty}_{t}$-intensity
\begin{equation}
\lambda_{t}:=\phi\left(\int_{-\infty}^{t-}h(t-s)N(ds)\right),\label{dynamics}
\end{equation}
where $\phi(\cdot):\mathbb{R}\rightarrow\mathbb{R}^{+}$ is locally integrable, left continuous,
$h(\cdot):\mathbb{R}^{+}\rightarrow\mathbb{R}$ and locally integrable.
In \eqref{dynamics}, $\int_{-\infty}^{t-}h(t-s)N(ds)$ stands for $\sum_{\tau<t}h(t-\tau)$, where
$\tau$ are the occurrences of the points before time $t$. In the literature, $h(\cdot)$ and $\phi(\cdot)$ are usually referred to
as exciting function (or sometimes kernel function or self-interaction function) and intensity function respectively, 
see e.g. \cite{Chevallier}.
A Hawkes process is linear if the intensity function $\phi(\cdot)$ is linear and it is nonlinear otherwise.

The Hawkes process when $\phi(\cdot)$ is linear was first proposed by Alan Hawkes
in 1971 to model earthquakes and their aftershocks \cite{Hawkes}. 
The nonlinear Hawkes process was first introduced by Br\'{e}maud and Massouli\'{e} \cite{Bremaud}.
The Hawkes process naturally generalizes the Poisson process and it
captures both the self-exciting property and the clustering effect, and it is a very versatile model
for statistical analysis. These explain why it has wide applications in neuroscience, genome analysis,
criminology, social networks, healthcare, seismology, insurance, finance and many other fields.
For a list of references, we refer to \cite{ZhuThesis}.

Most of the asymptotic results for Hawkes processes in the literature
are the large time limit theorems. 
For the linear Hawkes process,
the functional law of large numbers
and functional central limit theorems were studied in Bacry et al. \cite{Bacry};
the large deviations principle was studied in Bordenave
and Torrisi \cite{Bordenave}; and the moderate deviation principle was obtained in Zhu \cite{ZhuMDP}.
The precise large and moderate deviations are recently studied in Gao and Zhu \cite{GaoFZhu}.
For the nonlinear Hawkes process, Zhu \cite{ZhuCLT} studied the functional central limit theorems
by using Poisson embeddings and a careful analysis of the decay of the correlations over time. 
In \cite{ZhuII}, Zhu obtained a process-level, i.e. level-3 large deviation principle
and the rate function is expressed as a variational problem optimizing over
a certain entropy function of any simple point process against the underlying nonlinear Hawkes process.
When the exciting function is exponential and the process is Markovian, 
an alternative expression for the rate function
for the large deviations was obtained in Zhu \cite{ZhuI}.
Very recently, using the techniques as a combination of Poisson embeddings, Stein's method and Malliavin calculus, 
the quantitative Gaussian and Poisson approximations were studied in Torrisi \cite{TorrisiI,TorrisiII}.
The Malliavin calculus for Hawkes processes has also appeared in \cite{Takeuchi}.
In the case of linear Hawkes process, the limit theorems for nearly unstable, also known as, nearly critical case, 
that is, when $\phi(z)=\nu+z$ and $\Vert h\Vert_{L^{1}}\approx 1$ are studied
in Jaisson and Rosenbaum \cite{Jaisson} when the exciting function has light tail
and in Jaisson and Rosenbaum \cite{JaissonII} when the exciting function has heavy tail.

There have been some progress made in the direction of asymptotic results
other than the large time limits. For instance, when the exciting function is exponential,
the intensity process and the pair $(N_{t},\lambda_{t})$ are Markovian. 
In Gao and Zhu \cite{GZ}, they studied the functional central limit theorems
for the linear Hawkes process
when the initial intensity is large, and they further studied the large deviations
and applied their results to insurance and queueing systems in \cite{GZ2}.
For the more general linear and non-Markovian case, Gao and Zhu \cite{GZ3} considered the large
baseline intensity asymptotic results and studied the applications to queueing systems.

In recent years, the mean-field limit for high dimensional Hawkes processes has also been 
studied, and it first appeared in Delattre et al. \cite{Delattre}. They showed
that under a certain setting, the mean-field limit is an inhomogeneous Poisson process. 
Other mean-field limit works include Chevallier \cite{Chevallier} who
studied a generalized Hawkes process model with an inclusion of the dependence on
the age of the process, and Delattre and Fournier \cite{DF} who studied the mean-field limit
for Hawkes processes on a graph with two nodes whether or not influence each other modeled
by i.i.d. Bernoulli random variables.

In this paper, we are interested in studying a new asymptotic regime
for the nonlinear Hawkes process starting from empty past history, 
in which the intensity function is large and the exciting function is small. 
More precisely, we introduce the small parameter $\epsilon>0$ and consider
the nonlinear Hawkes process $N_{t}^{\epsilon}$ with intensity:
\begin{equation}
\lambda_{t}^{\epsilon}=\frac{1}{\epsilon}\phi\left(\int_{0}^{t-}\epsilon h(t-s)dN_{s}^{\epsilon}\right).
\end{equation}
In this asymptotic regime, the pair of the intensity function and the exciting function
has the transformation $(\phi,h)\mapsto(\frac{1}{\epsilon}\phi,\epsilon h)$. 

Now, let us explain why this asymptotic regime is natural and also point out that
such a regime has been studied extensively in many similar settings in the literature.

When the Hawkes process is linear, say $\phi(z)=\nu+z$, $h(\cdot):\mathbb{R}^{+}\rightarrow\mathbb{R}^{+}$,
where $\nu$ is the baseline intensity, we have
\begin{equation}
\lambda_{t}^{\epsilon}=\frac{\nu}{\epsilon}
+\int_{0}^{t-}h(t-s)dN_{s}^{\epsilon}.
\end{equation}
This gives the intensity of a linear Hawkes process
with exciting function $h$, 
and a large baseline intensity $\frac{\nu}{\epsilon}$.
Therefore, the asymptotic regime considered
in this paper corresponds to the large baseline intensity regime that is studied in Gao and Zhu \cite{GZ3}.

The asymptotic regime studied in this paper for the univariate Hawkes process
is also equivalent for the asymptotics for the mean process for the 
high-dimensional multivariate Hawkes process. Our work is related to the mean-field limit
for high-dimensional Hawkes processes in \cite{Delattre, Chevallier, DF}.
To see the connection of our work with the mean-field limit literature of Hawkes processes, 
let us first define a multivariate Hawkes process as follows.
An $N$-dimensional Hawkes process $(Z_{t}^{1},\ldots,Z_{t}^{n})$
is an $N$-dimensional point process 
admitting an $\mathcal{F}_{t}$-intensity $(\lambda_{t}^{1},\ldots,\lambda_{t}^{N})$
such that
\begin{equation}
\lambda_{t}^{i}:=\phi_{i}\left(\sum_{j=1}^{N}\int_{0}^{t-}h_{ij}(t-s)dZ_{s}^{j}\right),
\end{equation}
where $\phi_{i}(\cdot):\mathbb{R}\rightarrow\mathbb{R}^{+}$ is locally integrable, left continuous,
$h_{ij}(\cdot):\mathbb{R}^{+}\rightarrow\mathbb{R}$ and
we always assume that $\Vert h_{ij}\Vert_{L^{1}}=\int_{0}^{\infty}h_{ij}(t)dt<\infty$.
For the multivariate Hawkes process, a jump in one component will not only increase
the intensity of future jumps of its own component, known as the self-exciting property,
but also increase the intensity of the future jumps of or the other components
that are connected to its own component, which is known as the mutually-exciting property.
By using the Poisson embeddings, see e.g \cite{Bremaud,Delattre},
we can express the Hawkes process $(Z_{t}^{1},\ldots,Z_{t}^{n})$
as the solution of a Poisson driven SDE:
\begin{equation}
Z^{i}_t=\int_0^t \int_0^\infty {\bf 1}_{\left\{z \leq \phi_{i}\left(\sum_{j=1}^N\int_0^{s-}h_{ij}(s-u)dZ_u^{j}\right)\right\}}
\pi^i(ds\,dz),
\qquad
1\leq i\leq N,
\end{equation}
where $\{\pi^i(ds\,dz), i\geq 1\}$ are a sequence of  i.i.d.  Poisson
measures with common intensity measure $dsdz$ on
$[0,\infty) \times [0,\infty)$. 
As a special case, for each $N\geq 1$, we let $h_{ij}=\frac{1}{N}h$
and $\phi_{i}=\phi$ and
we consider the Hawkes process
$(Z^{N,1}_t,\dots,Z^{N,N}_t)_{t\geq 0}$ which can be expressed as
\begin{equation} \label{N-dim-Hawkes-process-eq}
Z^{N,i}_t=\int_0^t \int_0^\infty {\bf 1}_{\left\{z \leq \phi \left(N^{-1}\sum_{j=1}^N\int_0^{s-}h(s-u)dZ_u^{N,j}\right)\right\}}
\pi^i(ds\,dz).
\end{equation}
The \textit{mean process} of the Hawkes processes is defined by
$(Z^{N,1}_t,\dots,Z^{N,N}_t)_{t\geq 0}$:
\begin{equation} \label{Hawkes-mean-process-eq-0}
\overline{Z}^{N}_t=\frac{1}{N}\sum_{i=1}^N Z^{N,i}_t,~t\geq 0.
\end{equation}
It follows from \eqref{N-dim-Hawkes-process-eq} that
\begin{equation}\label{compare1}
\overline{Z}^{N}_t=\int_0^t \int_0^\infty {\bf 1}_{\left\{z \leq \phi \left(\int_0^{s-}h(s-u)d\overline{Z}_u^{N}\right)\right\}}
\frac{1}{N}\sum_{i=1}^{N}\pi^{i}(dsdz),
\end{equation}
where $\sum_{i=1}^{N}\pi^{i}(dsdz)$ is a Poisson measure on $[0,\infty)\times[0,\infty)$
with intensity $N$.

On the other hand, let us recall that the nonlinear Hawkes process 
$N_{t}^{\epsilon}$
with the intensity function $\frac{\phi}{\epsilon}$ and the exciting function $\epsilon h$
can be expressed via Poisson embedding as the unique strong solution 
to the following equation:
\begin{equation}
N_{t}^{\epsilon}=\int_{0}^{t}\int_{0}^{\infty}1_{[0,\frac{1}{\epsilon}\phi(\int_{0}^{s-}\epsilon h(s-u)dN_{u}^{\epsilon})]}
(z)\pi(dzds),
\end{equation}
where $\pi(dzds)$ is a Poisson random measure on $[0,\infty)\times[0,\infty)$ with intensity $1$.
In this paper, we are interested in the asymptotics for $Z_{t}^{\epsilon}:=\epsilon N_{t}^{\epsilon}$,
which satisfies the dynamics:
\begin{align}
Z_{t}^{\epsilon}
&:=\epsilon\int_{0}^{t}\int_{0}^{\infty}1_{[0,\frac{1}{\epsilon}\phi(\int_{0}^{s-}h(s-u)dZ_{u}^{\epsilon})]}(z)\pi(dzds)
\nonumber
\\
&=\int_{0}^{t}\int_{0}^{\infty}1_{[0,\phi(\int_{0}^{s-}h(s-u)dZ_{u}^{\epsilon})]}(z)\epsilon\pi^{\epsilon^{-1}}(dzds).
\label{Hawkes-sde}
\end{align}
where $\pi^{\epsilon^{-1}}(dzds)$ is a Poisson random measure on $[0,\infty)\times[0,\infty)$ with intensity $\epsilon^{-1}$.

By comparing \eqref{compare1} with \eqref{Hawkes-sde},
it becomes clear that the mean process of an $N$-dimensional Hawkes process
defined in \eqref{N-dim-Hawkes-process-eq} 
has the same dynamics as a univariate Hawkes process
with $N=\frac{1}{\epsilon}$. 
All the asymptotic results we are going to derive in this paper for 
the $Z_{t}^{\epsilon}$ process automatically hold for the mean process $\overline{Z}_{t}^{N}$.
We will go back to this in Section \ref{SecMeanProcess}.

The asymptotic results for the mean process for a high-dimensional Hawkes process
in Section \ref{SecMeanProcess} can shed some lights for the applications of high-dimensional Hawkes processes
in various context. 
Hawkes processes have been applied to the study of neuroscience, 
see e.g. neuroscience, see e.g. \cite{PerniceI,PerniceII, RRT, Reynaud}.
More recently, mean-field limits for extended Hawkes processes
have been used to model the neural networks in e.g. \cite{Chevallier,DL,CDLO}.
The large deviations results in Section \ref{SecMeanProcess} can be used
to estimate the probability of rare events in a neural network. 
The moderate deviations results in Section \ref{SecMeanProcess} can be used to fill
in the gap between the second-order fluctuations and the large deviations regime.
We can also use the multivariate Hawkes process of dimension $N$ to represent
the loss process for $N$ firms in a large portfolio. The results in Section \ref{SecMeanProcess}
can be used to provide estimates for the tail probabilities for the loss of a large portfolio.
We refer to \cite{DaiPra,DDD,GSSS} for the works of large portfolio losses in finance.
Note that the results we obtained in Section \ref{SecMeanProcess} are
for the standard multivariate nonlinear Hawkes processes. In order to apply our results
to neural networks in neuroscience, large portfolio losses in finance, and many other contexts, one needs to extend our results
for the generalized Hawkes processes suitable for the applications in various contexts. Since there are many different
ways to generalize the standard multivariate nonlinear Hawkes processes for the purpose of applications, 
we restrict the study in this paper to the most standard nonlinear Hawkes processes. 
Nevertheless, the methodology presented in this paper should be applicable for various extensions.

The scalings in \eqref{Hawkes-sde} for stochastic equations with Poisson noise
have been widely studied in the literature, see e.g.
Budhiraja et al. \cite{Budhiraja}, 
Budhiraja et al. \cite{BudhirajaII}, 
Budhiraja et al. \cite{BudhirajaIII}.
The large deviations and moderate deviations for stochastic equations with jumps can be established usually using the variational representation  in  \cite{Budhiraja}. However, in our case, the coefficient of the dynamics  \eqref{Hawkes-sde}  is  a indicator function with path-dependency, which is  not continuous.  
As a result, we cannot apply the results from \cite{Budhiraja,BudhirajaII} directly.
Instead of pursuing a modification of the variational
representation approach in \cite{Budhiraja,BudhirajaII}, 
we will adopt a more direct approach to establish large and moderate deviations in our paper.

We organize this paper as follows. In Section \ref{MainSection},
we introduce the main results of the paper. We will study
fluctuations in Section \ref{CLTSection}, large deviations
in Section \ref{LDPSection} and moderate deviations in Section \ref{MDPSection}.
The asymptotic results for the mean process for a high-dimensional Hawkes process
are presented in Section \ref{SecMeanProcess}.
Finally, all the proofs will be given in Section \ref{ProofsSection}.

%%%%%%%%%%%%%%%%%%%%%%%%%%%%%%%%%%%%%%%%%%%%%%%%%%%%%%%%%%%%%%%%%%%%%%%%%%%%%%%%%%%%%%%%%%%%%%%
\section{Main Results}\label{MainSection}

Before we proceed, let us summarize here a list of key assumptions that
will be used throughout the paper.

\begin{assumption}\label{AssumpI}
$\phi(\cdot):\mathbb{R}\rightarrow\mathbb{R}^{+}$ 
is $\alpha$-Lipschitz for some $0<\alpha<\infty$. 
$h(\cdot):\mathbb{R}_{\geq 0}\rightarrow\mathbb{R}$ is locally integrable
and locally bounded. 
\end{assumption}

\begin{assumption}\label{AssumpII}
$h$ is differentiable and $|h'|$ is locally integrable.
\end{assumption}

\begin{assumption}\label{AssumpIII}
$\phi(\cdot)$ is $\alpha$-Lipschitz and $\alpha\Vert h\Vert_{L^{1}[0,T]}=\alpha\int_{0}^{T}|h(t)|dt<1$.
\end{assumption}

\begin{assumption}\label{AssumpIV}
$\phi(\cdot)$ is twice differentiable and $\Vert\phi''\Vert_{L^{\infty}}=\sup_{x\geq 0}|\phi''(x)|<\infty$.
\end{assumption}

\begin{assumption}\label{AssumpV}
$\inf_{x\geq 0}\phi(x)>0$,
$h$ is differentiable and $\Vert h'\Vert_{L^{\infty}[0,T]}=\sup_{t\in[0,T]}|h'(t)|<\infty$.
\end{assumption}

We collect here a set of notations that will be used throughout the paper.

\begin{itemize}
\item
$C[0,T]$ is the space of real-valued continuous functions on $[0,T]$;
\item 
$D[0,T]$ is the space of real-valued c\`{a}dl\`{a}g functions on $[0,T]$ equipped with Skorokhod topology;
\item
$\mathcal{AC}_{0}[0,T]$ is the space of functions
$f:[0,T]\rightarrow\mathbb{R}$ that are absolutely continuous
with $f(0)=0$;
\item
$\mathcal{AC}_{0}^{+}[0,T]$ is the space of non-decreasing functions 
$f:[0,T]\rightarrow\mathbb{R}$ that are absolutely continuous
with $f(0)=0$.
\end{itemize}

%%%%%%%%%%%%%%%%%%%%%%%%%%%%%%%%%%%%%%%%%%%%%%%%%%%%%%%%%%%%%%%%%%%%
\subsection{Fluctuations}\label{CLTSection}

In this section, we are interested to study the fluctuations
of $Z^{\epsilon}$ around its limit $Z^{0}$. We will obtain a functional
central limit theorem for $Z^{\epsilon}$.

As $\epsilon\rightarrow 0$, $Z_{t}^{\epsilon}$ will converge
on $D[0,T]$ to a deterministic function $Z_{t}^{0}$ that satisfies the equation:
\begin{equation}\label{Z0Eqn}
Z_{t}^{0}=\int_{0}^{t}\phi\left(\int_{0}^{s}h(s-u)dZ_{u}^{0}\right)ds.
\end{equation}
Indeed, this result will follow from the fluctuation result
for $Z_{t}^{\epsilon}$, that is, we will show
that $\frac{Z_{t}^{\epsilon}-Z_{t}^{0}}{\sqrt{\epsilon}}$ converges
in distribution on $D[0,T]$ to a non-trivial stochastic limit, which turns out to be a continuous Gaussian process. 
Let us notice that the equation \eqref{Z0Eqn} has a unique locally bounded and non-negative solution under
certain assumptions, see Delattre \cite{Delattre}. 
It is interesting that the mean of the inhomogeneous Poisson process
as the mean-field limit for high dimensional
Hawkes processes leads to the same limiting equation as in \eqref{Z0Eqn}.

Let us define:
\begin{equation}
X_{t}^{\epsilon}
=\frac{Z_{t}^{\epsilon}-Z_{t}^{0}}
{\sqrt{\epsilon}}.
\end{equation}
 
\begin{theorem}\label{CLTThm}
Suppose Assumption \ref{AssumpI}, Assumption \ref{AssumpII} and Assumption \ref{AssumpIV} hold. 
$X^\epsilon$ converges in distribution on $D[0,T]$ to a continuous Gaussian process $X_t$ defined by
\begin{equation}\label{Gauss-process}
\begin{aligned}
X_t=&\int_{0}^{t}\phi'\left(\int_{0}^{s}h(s-u)dZ_{u}^{0}\right)\left(h(0)X_s+\int_{0}^{s}X_uh'(s-u)du\right)ds \\
&+ \int_{0}^{t}\sqrt{ \phi\left(\int_{0}^{s}h(s-u)dZ_{u}^{0}\right)}dW_{s},
\end{aligned}
\end{equation}
where $W_{t}$ is a standard Brownian motion.
\end{theorem}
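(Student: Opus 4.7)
The plan is to put $Z^\epsilon$ into compensated form, Taylor-expand $\phi$ to show that $X^\epsilon$ satisfies a linear Volterra integral equation driven by a scaled compensated Poisson integral plus a negligible remainder, and then identify the limit via the functional martingale CLT combined with uniqueness of that integral equation.

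First I would write the semimartingale decomposition. Since $\epsilon\pi^{\epsilon^{-1}}(dz\,ds)$ admits $dz\,ds$ as its compensator, the process
\[
M_t^\epsilon \;:=\; Z_t^\epsilon - \int_0^t \phi\!\Bigl(\int_0^{s-}h(s-u)dZ_u^\epsilon\Bigr)ds
\]
is a purely discontinuous square-integrable martingale with jumps of size $\epsilon$ and predictable quadratic variation $\langle M^\epsilon\rangle_t = \epsilon\int_0^t\phi(\int_0^{s-}h(s-u)dZ_u^\epsilon)\,ds$. Subtracting \eqref{Z0Eqn}, dividing by $\sqrt\epsilon$, Taylor-expanding $\phi$ around $\int_0^s h(s-u)\,dZ_u^0$ using $\|\phi''\|_\infty<\infty$ from Assumption~\ref{AssumpIV}, and integrating by parts under Assumption~\ref{AssumpII} to rewrite
\[
\int_0^{s-} h(s-u)\,dX_u^\epsilon \;=\; h(0)\,X_{s-}^\epsilon + \int_0^s h'(s-u)\,X_u^\epsilon\,du,
\]
I would arrive at
\[
X_t^\epsilon \;=\; \int_0^t\phi'\!\Bigl(\int_0^s h(s-u)dZ_u^0\Bigr)\Bigl[h(0)X_{s-}^\epsilon + \int_0^s h'(s-u)X_u^\epsilon\,du\Bigr]ds \;+\; \frac{M_t^\epsilon}{\sqrt\epsilon} \;+\; R_t^\epsilon,
\]
where $|R_t^\epsilon|\leq \tfrac{1}{2}\|\phi''\|_\infty\sqrt\epsilon\int_0^t\bigl(\int_0^s h(s-u)dX_u^\epsilon\bigr)^2 ds$.

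Next I would establish the uniform estimate $\sup_{0<\epsilon\leq 1}\mathbb{E}[\sup_{t\leq T}(X_t^\epsilon)^2]<\infty$, which also forces $R_t^\epsilon\to 0$ in $L^1$. The Lipschitz assumption on $\phi$ (Assumption~\ref{AssumpI}) gives linear growth, so a Gronwall argument on $\mathbb{E}[Z_t^\epsilon]$ yields $\sup_{\epsilon,\,t\leq T}\mathbb{E}[Z_t^\epsilon]<\infty$; Doob's $L^2$ inequality then bounds $\mathbb{E}[\sup_{s\leq t}(M_s^\epsilon/\sqrt\epsilon)^2]$ by $4\mathbb{E}\langle M^\epsilon/\sqrt\epsilon\rangle_t$, uniformly in $\epsilon$. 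Substituting into the displayed linear equation for $X^\epsilon$, using the boundedness of $\phi'$ together with the local boundedness/integrability of $h$ and $h'$, and applying Gronwall to $\mathbb{E}[\sup_{s\leq t}(X_s^\epsilon)^2]$ closes the loop. Tightness of $\{X^\epsilon\}$ in $D[0,T]$ then follows by Rebolledo's criterion for the martingale part (its quadratic variation is tight and $|\Delta M^\epsilon/\sqrt\epsilon|\leq\sqrt\epsilon\to 0$) and by a uniform Lipschitz estimate on the drift part once $X^\epsilon$ is controlled in sup-norm.

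Finally, the functional martingale CLT (Jacod--Shiryaev) gives $M^\epsilon/\sqrt\epsilon \Rightarrow \int_0^\cdot\sqrt{\phi(\int_0^s h(s-u)dZ_u^0)}\,dW_s$ on $D[0,T]$, since $\langle M^\epsilon/\sqrt\epsilon\rangle_t$ converges in probability to $\int_0^t\phi(\int_0^s h(s-u)dZ_u^0)ds$ (an easy consequence of $Z^\epsilon\to Z^0$ uniformly, which is extracted en route to the tightness bound) and its jumps vanish. Combining this with tightness of $X^\epsilon$ and negligibility of $R^\epsilon$, every subsequential weak limit of $X^\epsilon$ solves \eqref{Gauss-process}; a last Gronwall argument shows that \eqref{Gauss-process} has a unique solution, pinning down the limit as the stated continuous Gaussian process. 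The main technical obstacle is the uniform sup-$L^2$ control of $X^\epsilon$: both feedback terms (through $h(0)$ and through the convolution with $h'$) act on the running supremum, so closing the Gronwall loop requires carefully combining Doob's inequality on the martingale part with the Lipschitz structure of $\phi$ and the local integrability of $h,h'$, which together ensure the linear Volterra kernel is integrable on $[0,T]$.
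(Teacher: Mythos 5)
Your proposal is essentially correct and tracks the paper's overall architecture (same martingale decomposition $M^\epsilon_t := Z^\epsilon_t - \int_0^t\phi(\cdot)ds$, same Taylor expansion controlled by $\|\phi''\|_\infty$, same integration by parts under Assumption~\ref{AssumpII} to rewrite $\int_0^s h(s-u)\,dX^\epsilon_u$, same Gronwall closure for the uniform sup-$L^2$ bound and for uniqueness). Where you diverge is in the identification of the limit. You apply the Jacod--Shiryaev functional martingale CLT to $M^\epsilon/\sqrt{\epsilon}$ and then substitute into the linear Volterra equation, appealing to uniqueness to pin down subsequential limits. The paper instead extracts the martingale problem in the limit: it shows that both $M^\epsilon_t/\sqrt{\epsilon}$ and $\epsilon^{-1}((M^\epsilon_t)^2-\langle M^\epsilon\rangle_t)$ are uniformly integrable martingales converging to $\tilde M_t$ and $\tilde M_t^2-\langle \tilde M\rangle_t$, so that along any subsequential limit $X$ of $X^\epsilon$, both $M_t := X_t-\int_0^t\phi'(\cdot)\int_0^s h(s-u)\,dX_u\,ds$ and $M_t^2-\int_0^t\phi(\cdot)ds$ are continuous martingales; then L\'evy's characterization plus the martingale representation theorem produce $W$. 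The two identification routes are essentially interchangeable; your CLT route is a bit slicker on the martingale part, while the martingale-problem route automatically handles the joint law of $(X,W)$ without having to argue separately about joint convergence. Similarly, for tightness you invoke Rebolledo's criterion where the paper uses Billingsley's Theorem~15.5 with explicit fourth-moment estimates on martingale increments---again equivalent, yours arguably a shortcut.

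There is one real (though minor) gap: you state that uniqueness ``pins down the limit as the stated continuous Gaussian process,'' but you never establish \emph{Gaussianity}. Uniqueness of the Volterra equation determines the law of $X$ given $W$, but it does not by itself say that $X$ is a Gaussian process. The paper supplies this via a Picard iteration: defining $X^{(0)}:=0$, $X^{(1)}_t:=\int_0^t\sqrt{\phi(\cdot)}\,dW_s$, and iterating the drift operator, each $X^{(n)}$ is Gaussian, the iteration converges uniformly on $[0,T]$ a.s.\ by a factorial Gronwall bound, and the a.s.\ uniform limit of Gaussian processes is Gaussian; uniqueness then identifies this limit with $X$. You would need to add some such argument (or note that the solution map is an a.s.\ limit of linear maps of $W$) to justify the ``Gaussian'' part of the conclusion.
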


\begin{remark} 
The Gaussian process defined by \eqref{Gauss-process} is also 
a semimartingale and 
$$
h(0)X_s+\int_{0}^{s}X_uh'(s-u)du=\int_{0}^{s}h(s-u)dX_u.
$$ 
Thus, the Gaussian process $X_t$ has the following equivalent characterization:
\begin{equation}
\begin{aligned}
X_t=&\int_{0}^{t}\phi'\left(\int_{0}^{s}h(s-u)dZ_{u}^{0}\right)\int_{0}^{s}h(s-u)dX_uds  + \int_{0}^{t}\sqrt{ \phi\left(\int_{0}^{s}h(s-u)dZ_{u}^{0}\right)}dW_{s}.
\end{aligned}
\end{equation}
\end{remark}

A key component of the proof of Theorem \ref{CLTThm}
is the tightness of the sequence $X^\epsilon$ on $D[0,T]$
that we will establish in the following lemma. 

\begin{lemma}\label{Xtight}
Suppose Assumption \ref{AssumpI} and Assumption \ref{AssumpII} hold, 
$X_{t}^{\epsilon}$ is tight on $D[0,T]$ and the all limits are in $C[0,T]$.
\end{lemma}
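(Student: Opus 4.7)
The plan is to decompose $X^\epsilon$ into a drift piece and a compensated martingale, establish a uniform $L^2$ bound on $X_T^{\epsilon,*}:=\sup_{t\le T}|X_t^\epsilon|$ via a Gronwall loop, verify Aldous' criterion for tightness in $D[0,T]$, and note that $|\Delta X_t^\epsilon|\le\sqrt\epsilon\to 0$ forces all weak limits to lie in $C[0,T]$.

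Introduce $A_s^\epsilon:=\int_0^{s-}h(s-u)\,dZ_u^\epsilon$, $A_s^0:=\int_0^s h(s-u)\,dZ_u^0$, and the martingale
\begin{equation*}
M_t^\epsilon:=Z_t^\epsilon-\int_0^t\phi(A_s^\epsilon)\,ds,\qquad\tilde M_t^\epsilon:=M_t^\epsilon/\sqrt\epsilon,
\end{equation*}
whose jumps have size $\epsilon$, so $[M^\epsilon]_t=\epsilon Z_t^\epsilon$ and $\langle M^\epsilon\rangle_t=\epsilon\int_0^t\phi(A_s^\epsilon)\,ds$. Subtracting \eqref{Z0Eqn} yields
\begin{equation*}
X_t^\epsilon=\frac{1}{\sqrt\epsilon}\int_0^t\bigl[\phi(A_s^\epsilon)-\phi(A_s^0)\bigr]\,ds+\tilde M_t^\epsilon.
\end{equation*}
Combining the $\alpha$-Lipschitz property of $\phi$ (Assumption~\ref{AssumpI}), the identity $A_s^\epsilon-A_s^0=\sqrt\epsilon\int_0^s h(s-u)\,dX_u^\epsilon$, and the integration-by-parts
\begin{equation*}
\int_0^s h(s-u)\,dX_u^\epsilon=h(0)X_s^\epsilon+\int_0^s X_u^\epsilon h'(s-u)\,du
\end{equation*}
valid under Assumption~\ref{AssumpII}, one obtains
\begin{equation*}
X_t^{\epsilon,*}\le\alpha C_T\int_0^t X_s^{\epsilon,*}\,ds+\tilde M_t^{\epsilon,*},\qquad C_T:=|h(0)|+\|h'\|_{L^1[0,T]}<\infty.
\end{equation*}

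A preliminary Gronwall applied to $\mathbb{E}[Z_t^\epsilon]\le\phi(0)T+\alpha\|h\|_{L^\infty[0,T]}\int_0^t\mathbb{E}[Z_s^\epsilon]\,ds$ gives $\sup_\epsilon\mathbb{E}[Z_T^\epsilon]<\infty$; Burkholder--Davis--Gundy then provides $\sup_\epsilon\mathbb{E}[(\tilde M_T^{\epsilon,*})^2]\le C\,\mathbb{E}[[\tilde M^\epsilon]_T]=C\,\mathbb{E}[Z_T^\epsilon]<\infty$, and a second Gronwall loop applied to the preceding display yields $\sup_\epsilon\mathbb{E}[(X_T^{\epsilon,*})^2]<\infty$. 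To verify Aldous, take stopping times $\sigma\le\tau\le(\sigma+\delta)\wedge T$: the drift part of $X_\tau^\epsilon-X_\sigma^\epsilon$ is dominated by $\alpha C_T X_T^{\epsilon,*}\delta$, which tends to zero in probability uniformly in $\epsilon$, while
\begin{equation*}
\mathbb{E}\bigl[(\tilde M_\tau^\epsilon-\tilde M_\sigma^\epsilon)^2\bigr]=\mathbb{E}\!\left[\int_\sigma^\tau\phi(A_s^\epsilon)\,ds\right]\le C'\delta
\end{equation*}
by optional stopping and the moment bound on $Z^\epsilon$. Combined with the pointwise moment bound, this gives tightness in $D[0,T]$.

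The main obstacle is closing the Gronwall loop for $X^{\epsilon,*}$ without the contraction hypothesis $\alpha\|h\|_{L^1}<1$ (contrast Assumption~\ref{AssumpIII}): Assumption~\ref{AssumpII} is exactly what converts the convolution $\int_0^s h(s-u)\,dX_u^\epsilon$ into a pointwise multiple of $X^{\epsilon,*}$ via integration by parts, after which the BDG/Gronwall/Aldous sequence runs smoothly. The final $C[0,T]$ claim follows since $|\Delta X_t^\epsilon|\le\sqrt\epsilon$ uniformly in $t$.
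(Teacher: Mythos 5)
Your proof is correct and shares the paper's key structural idea---the decomposition $X^\epsilon_t = \tilde M^\epsilon_t + \frac{1}{\sqrt\epsilon}\int_0^t[\phi(A_s^\epsilon)-\phi(A_s^0)]\,ds$, the integration-by-parts under Assumption~\ref{AssumpII} to convert the convolution into a pointwise bound on $X^{\epsilon,*}$, and a Gronwall step to control the drift by the martingale supremum (this is exactly the content of Lemma~\ref{X2Bound} in the paper). Where you diverge is the tightness argument itself: the paper proves a modulus-of-continuity estimate directly, partitioning $[0,T]$ into mesh-$\delta$ subintervals and controlling the martingale oscillation via a \emph{fourth}-moment Burkholder--Davis--Gundy bound (which in turn requires a uniform bound on $\mathbb{E}[(Z_T^\epsilon)^2]$, itself derived by another Gronwall loop), then invokes Theorem~15.5 of Billingsley to conclude $C$-tightness. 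You instead verify Aldous' criterion, needing only second moments of the martingale increments via optional stopping and the predictable bracket $\langle\tilde M^\epsilon\rangle_\tau-\langle\tilde M^\epsilon\rangle_\sigma=\int_\sigma^\tau\phi(A_s^\epsilon)\,ds$, plus the jump-size observation $|\Delta X^\epsilon_t|\le\sqrt\epsilon\to0$ to place all limit points in $C[0,T]$. Your route is slightly more economical in moment requirements (second vs.\ fourth moments of the martingale, no need for $\mathbb{E}[(Z_T^\epsilon)^2]$), and the explicit appeal to Aldous is arguably cleaner than the paper's hand-built modulus estimate; both routes deliver the same conclusion under the same assumptions.
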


The proof of the tightness of $X^\epsilon$, relies on two auxiliary lemmas. 
The first lemma, i.e. Lemma \ref{Z1Bound}
gives a uniform bound on the first moment of $Z_{T}^{\epsilon}$, uniformly in $\epsilon$,
and the second lemma, i.e. Lemma \ref{X2Bound}, gives us a uniform bound
on the second moment of the running maximum of $X^\epsilon$ process, uniformly in $\epsilon$.

\begin{lemma}\label{Z1Bound}
Suppose Assumption \ref{AssumpI} holds.
\begin{equation}
\sup_{\epsilon>0}\mathbb{E}[Z_{T}^{\epsilon}]\leq
\phi(0)Te^{\alpha\Vert h\Vert_{L^{\infty}[0,T]}T}.
\end{equation}
\end{lemma}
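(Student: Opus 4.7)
The plan is to exploit the martingale/intensity characterization of $N^\epsilon$ together with the Lipschitz bound on $\phi$ and monotonicity of $Z^\epsilon$, then close the loop via Gr\"onwall.

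First, I would rewrite the expectation in terms of the compensator. Since $\lambda^\epsilon_s = \frac{1}{\epsilon}\phi\bigl(\int_0^{s-} h(s-u)\,dZ^\epsilon_u\bigr)$ is the $\mathcal{F}^{-\infty}_t$-intensity of $N^\epsilon$ and $Z^\epsilon_t = \epsilon N^\epsilon_t$, we have
\begin{equation*}
\mathbb{E}[Z^\epsilon_t] \;=\; \epsilon\,\mathbb{E}\!\left[\int_0^t \lambda^\epsilon_s\,ds\right] \;=\; \mathbb{E}\!\left[\int_0^t \phi\!\left(\int_0^{s-} h(s-u)\,dZ^\epsilon_u\right) ds\right].
\end{equation*}
Using $\phi(x) \le \phi(0) + \alpha|x|$ (from the $\alpha$-Lipschitz assumption together with $\phi \ge 0$) and the fact that $u \mapsto Z^\epsilon_u$ is non-decreasing, I would estimate
\begin{equation*}
\left|\int_0^{s-} h(s-u)\,dZ^\epsilon_u\right| \;\le\; \int_0^{s-} |h(s-u)|\,dZ^\epsilon_u \;\le\; \|h\|_{L^\infty[0,T]}\,Z^\epsilon_s,
\end{equation*}
so that
\begin{equation*}
\mathbb{E}[Z^\epsilon_t] \;\le\; \phi(0)\,t + \alpha\,\|h\|_{L^\infty[0,T]}\int_0^t \mathbb{E}[Z^\epsilon_s]\,ds.
\end{equation*}
An application of Gr\"onwall's inequality (using that $t \mapsto \phi(0)t$ is nondecreasing) then yields the stated bound for $t = T$.

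The only genuine difficulty is a matter of rigor: to apply Gr\"onwall one first needs $\mathbb{E}[Z^\epsilon_t] < \infty$ a priori, which is not automatic for a nonlinear Hawkes process. I would handle this by localization. Set $\tau_n := \inf\{t \ge 0 : N^\epsilon_t \ge n\}$, so $Z^\epsilon_{t\wedge\tau_n} \le \epsilon n$ is bounded. Replacing $t$ by $t\wedge\tau_n$ in the identity above (via optional stopping applied to the local martingale $N^\epsilon_t - \int_0^t \lambda^\epsilon_s\,ds$) produces
\begin{equation*}
\mathbb{E}[Z^\epsilon_{t\wedge\tau_n}] \;\le\; \phi(0)\,t + \alpha\,\|h\|_{L^\infty[0,T]}\int_0^t \mathbb{E}[Z^\epsilon_{s\wedge\tau_n}]\,ds,
\end{equation*}
since on $\{u \le t\wedge\tau_n\}$ we still have the same pointwise bound on $\phi(\cdot)$. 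Gr\"onwall now legitimately gives $\mathbb{E}[Z^\epsilon_{T\wedge\tau_n}] \le \phi(0)T\,e^{\alpha\|h\|_{L^\infty[0,T]}T}$, and monotone convergence as $n\to\infty$ (using $\tau_n \to \infty$ a.s., which itself follows from the non-explosion of $N^\epsilon$, guaranteed by the Poisson embedding construction) delivers the result uniformly in $\epsilon > 0$. The bound is independent of $\epsilon$ because the $\epsilon$'s in the scalings of $\phi$ and $h$ cancel exactly in $Z^\epsilon = \epsilon N^\epsilon$.
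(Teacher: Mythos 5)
Your proof is correct and uses essentially the same main estimate as the paper: pass to the compensator, apply the Lipschitz bound $\phi(x)\le\phi(0)+\alpha|x|$, bound $\int_0^{s-}|h(s-u)|\,dZ^\epsilon_u\le\|h\|_{L^\infty[0,T]}Z^\epsilon_s$ using monotonicity, and close via Gr\"onwall. The one genuine difference is your localization with $\tau_n=\inf\{t:N^\epsilon_t\ge n\}$, which makes the Gr\"onwall step airtight under Assumption \ref{AssumpI} alone; the paper instead leans on the a priori finiteness $\mathbb{E}[(Z^\epsilon_T)^k]<\infty$ from \eqref{k-order-moment}, but that bound is derived just above using the supermartingale argument and $\alpha\|h\|_{L^1[0,T]}<1$, i.e.\ under Assumption \ref{AssumpIII}, which Lemma \ref{Z1Bound} does not assume. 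Your stopping-time route is therefore slightly more self-contained for the stated hypotheses; as you note, the uniform bound on $\mathbb{E}[Z^\epsilon_{t\wedge\tau_n}]$ also forces $\mathbb{P}(\tau_n\le T)\to 0$, giving non-explosion as a byproduct rather than an input.
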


\begin{lemma}\label{X2Bound}
Suppose Assumption \ref{AssumpI} and Assumption \ref{AssumpII} hold, 
\begin{equation}
\sup_{\epsilon>0}\mathbb{E}\left[\sup_{0\leq t\leq T}(X_{t}^{\epsilon})^{2}\right]<\infty.
\end{equation}
\end{lemma}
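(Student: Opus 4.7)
\textbf{Proof proposal for Lemma \ref{X2Bound}.}

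The plan is to decompose $Z^\epsilon$ into its compensator and a compensated Poisson martingale, subtract the deterministic limit equation for $Z^0$, and obtain a semimartingale representation of $X^\epsilon$ to which one can apply Burkholder--Davis--Gundy and Gronwall. Writing $\tilde\pi^{\epsilon^{-1}}(dz\,ds)=\pi^{\epsilon^{-1}}(dz\,ds)-\epsilon^{-1}\,dz\,ds$, the dynamics \eqref{Hawkes-sde} become
\begin{equation*}
Z_t^{\epsilon}=\int_0^t\phi\!\Bigl(\int_0^{s-}h(s-u)\,dZ_u^{\epsilon}\Bigr)\,ds+M_t^{\epsilon},
\qquad
M_t^{\epsilon}:=\int_0^t\!\!\int_0^{\infty}\epsilon\,\mathbf{1}_{[0,\phi(\cdot)]}(z)\,\tilde\pi^{\epsilon^{-1}}(dz\,ds),
\end{equation*}
with $\langle M^{\epsilon}\rangle_t=\epsilon\int_0^t\phi(\int_0^{s-}h(s-u)\,dZ_u^{\epsilon})\,ds$. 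Subtracting \eqref{Z0Eqn} and dividing by $\sqrt{\epsilon}$ gives $X_t^{\epsilon}=A_t^{\epsilon}+\tilde M_t^{\epsilon}$ where $\tilde M_t^{\epsilon}:=M_t^{\epsilon}/\sqrt{\epsilon}$ has $\langle\tilde M^{\epsilon}\rangle_t=\int_0^t\phi(\cdot)\,ds$, and $A_t^{\epsilon}$ is the rescaled difference of the $\phi$-terms.

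For the martingale piece, the $\alpha$-Lipschitz assumption gives $\phi(x)\le\phi(0)+\alpha|x|$, and since $Z^{\epsilon}$ is non-decreasing, $|\int_0^{s-}h(s-u)\,dZ_u^{\epsilon}|\le\|h\|_{L^{\infty}[0,T]}Z_s^{\epsilon}$. Therefore $\mathbb{E}\langle\tilde M^{\epsilon}\rangle_T\le\phi(0)T+\alpha\|h\|_{L^{\infty}[0,T]}\int_0^T\mathbb{E}[Z_s^{\epsilon}]\,ds$, which is uniformly bounded in $\epsilon$ by Lemma \ref{Z1Bound}. Burkholder--Davis--Gundy then yields $\sup_{\epsilon>0}\mathbb{E}[\sup_{t\le T}|\tilde M_t^{\epsilon}|^2]<\infty$.

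For the drift, the Lipschitz property gives
\begin{equation*}
|A_t^{\epsilon}|\le\alpha\int_0^t\Bigl|\int_0^{s-}h(s-u)\,dX_u^{\epsilon}\Bigr|\,ds.
\end{equation*}
Here the key step uses Assumption \ref{AssumpII}: since $h$ is differentiable, integration by parts (valid since $X^{\epsilon}$ has finite variation and $h(s-\cdot)$ is continuous, so the jump-jump correction vanishes) gives
\begin{equation*}
\int_0^{s-}h(s-u)\,dX_u^{\epsilon}=h(0)X_s^{\epsilon}+\int_0^s X_u^{\epsilon}h'(s-u)\,du.
\end{equation*}
Squaring, applying Cauchy--Schwarz twice and using $\|h'\|_{L^1[0,T]}<\infty$ (from local integrability of $|h'|$), the RHS is bounded by a constant times $\sup_{u\le s}(X_u^{\epsilon})^2$, uniformly in $s\in[0,T]$.

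Combining the two bounds, taking suprema up to time $t\le T$, and taking expectations, we obtain, with $g_{\epsilon}(t):=\mathbb{E}[\sup_{s\le t}(X_s^{\epsilon})^2]$,
\begin{equation*}
g_{\epsilon}(t)\le C_1+C_2\int_0^t g_{\epsilon}(s)\,ds,
\end{equation*}
where $C_1,C_2$ depend only on $T,\alpha,\phi(0),\|h\|_{L^{\infty}[0,T]},\|h'\|_{L^1[0,T]}$ and the BDG constant, and are independent of $\epsilon$. Gronwall's inequality then yields $g_{\epsilon}(T)\le C_1 e^{C_2 T}$, which is the required uniform bound. The main obstacle is the drift: a naive bound on $A_t^{\epsilon}$ would appear to blow up like $1/\sqrt{\epsilon}$, and one has to exploit cancellation via the Lipschitz property of $\phi$ together with integration by parts to transfer the $dX_u^{\epsilon}$ integral (which is a jump integral one cannot directly sup-bound) onto a smooth $h'(s-u)\,du$ integral amenable to Gronwall.
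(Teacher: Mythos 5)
Your proposal is correct and follows essentially the same route as the paper's proof: decompose $Z^\epsilon$ into drift plus compensated-Poisson martingale, use the Lipschitz property of $\phi$ together with integration by parts (converting $\int_0^s h(s-u)\,dX_u^\epsilon$ into $h(0)X_s^\epsilon+\int_0^s X_u^\epsilon h'(s-u)\,du$) to control the drift, and close with Gronwall and a second-moment bound on the martingale via Lemma~\ref{Z1Bound}. The only difference is cosmetic: you take expectations first and run Gronwall on $g_\epsilon(t)=\mathbb{E}[\sup_{s\le t}(X_s^\epsilon)^2]$ (so you should note that $g_\epsilon(T)<\infty$ a priori, which follows since $Z^\epsilon$ is non-decreasing and $\mathbb{E}[(Z_T^\epsilon)^2]<\infty$ by \eqref{k-order-moment}), whereas the paper applies Gronwall pathwise to obtain $\sup_{t\le T}|X_t^\epsilon|\le \epsilon^{-1/2}\sup_{t\le T}|M_t^\epsilon|\,e^{\alpha(|h(0)|+\|h'\|_{L^1[0,T]})T}$ and only then takes a second moment with Doob's inequality; the paper also uses Doob rather than BDG, but for a second moment these are interchangeable.
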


The proofs of Theorem \ref{CLTThm}, Lemma \ref{Xtight}, Lemma \ref{Z1Bound} and Lemma \ref{X2Bound}
will all be given in Section \ref{ProofsSection}.

\begin{remark}
Note that \cite{ZhuCLT} studied the large time fluctuations for stationary nonlinear Hawkes processes
and more precisely, as a special case for the linear Hawkes process $\phi(x)=\nu+x$,  
we have $\frac{N_{nt}-\mu t}{\sqrt{n}}\rightarrow \sigma B(t)$ in distribution on $D[0,T]$ as $n\rightarrow\infty$, where 
$\mu=\frac{\nu}{1-\Vert h\Vert_{L^{1}}}$ and $\sigma^{2}=\frac{\nu}{(1-\Vert h\Vert_{L^{1}})^{3}}$, 
and $B(t)$ is a standard Brownian motion.
Note that for the large time functional central limit theorem, the limiting variance
depends on $\Vert h\Vert_{L^{1}}$ only, while in our Theorem \ref{CLTThm}, it 
depends on the entire exciting function $h(t)$ for $t\in[0,T]$. Moreover, in our limit, we obtain a 
Gaussian process that in general is not a Brownian motion. 
\end{remark}

%%%%%%%%%%%%%%%%%%%%%%%%%%%%%%%%%%%%%%%%%%%%%%%%%%%%%%%%%%%%%%%%%%%%
\subsection{Large deviations}\label{LDPSection}

We have already seen that $Z_{t}^{\epsilon}$ converges to the limit $Z_{t}^{0}$
on $D[0,T]$ and have studied the fluctuations around this limit. 
It is natural to ask about the probability of the rare events
that the process $Z_{t}^{\epsilon}$ deviates away from its deterministic limit.
That is the question of large deviations in probability theory. 

We start by giving a formal definition of the large deviation principle. 
We refer to Dembo and Zeitouni \cite{Dembo} and Varadhan \cite{VaradhanII} 
for general background of large deviations and the applications. 

A sequence $(P_{\epsilon})_{\epsilon\in\mathbb{R}^{+}}$ of probability measures on a topological space $X$ 
satisfies the large deviation principle with rate function $I:X\rightarrow\mathbb{R}$ and speed $b(\epsilon)$ 
if $I$ is non-negative, 
lower semicontinuous and for any Borel set $A$, we have
\begin{equation}
-\inf_{x\in A^{o}}I(x)\leq\liminf_{\epsilon\rightarrow 0}\frac{1}{b(\epsilon)}\log P_{\epsilon}(A)
\leq\limsup_{\epsilon\rightarrow 0}\frac{1}{b(\epsilon)}\log P_{\epsilon}(A)\leq-\inf_{x\in\overline{A}}I(x).
\end{equation}
Here, $A^{o}$ is the interior of $A$ and $\overline{A}$ is its closure. 

Now, we are ready to state the main results of large deviations for $Z_{t}^{\epsilon}$ on $D[0,T]$.

\begin{theorem}\label{LDPThm}
Suppose Assumption \ref{AssumpI}, Assumption \ref{AssumpIII}
and Assumption \ref{AssumpV} hold.
Then, $\mathbb{P}(Z_{t}^{\epsilon}\in\cdot)$ satisfies a large deviation
principle on $D[0,T]$ equipped with Skorokhod topology
with the speed $\epsilon^{-1}$ and the rate function
\begin{equation}\label{IEqn}
I(\eta):=\int_{0}^{T}\ell\left(\eta'(t);\phi\left(\int_{0}^{t}h(t-s)d\eta(s)\right)\right)dt,
\end{equation}
if $\eta\in\mathcal{AC}_{0}^{+}[0,T]$ and $+\infty$ otherwise, where
\begin{equation}\label{ellEqn}
\ell(x;y):=x\log\left(\frac{x}{y}\right)-x+y.
\end{equation}
\end{theorem}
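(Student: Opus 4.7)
The plan is to establish the LDP for $Z^\epsilon$ via exponential tightness combined with a Girsanov-type change-of-measure argument tailored to the path-dependent Poisson SDE \eqref{Hawkes-sde}. Because the coefficient in \eqref{Hawkes-sde} is an indicator function with path-dependence (and hence not continuous in the driving noise), the variational representation of \cite{Budhiraja} cannot be applied directly, so I would work with the point-process martingale problem directly, in the spirit of a Freidlin–Wentzell/Sanov approach for inhomogeneous Poisson processes.

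First I would prove exponential tightness of $\{Z^\epsilon\}$ on $D[0,T]$ at speed $\epsilon^{-1}$. Using Assumption \ref{AssumpI}, the affine bound $\phi(x)\le \phi(0)+\alpha|x|$ and \eqref{Hawkes-sde}, a Gronwall-type iteration for the exponential moment generating function gives $\epsilon \log \mathbb{E}\bigl[\exp(\theta Z^\epsilon_T/\epsilon)\bigr] \le C(\theta)$ for all sufficiently small $\theta > 0$. Together with a modulus-of-continuity estimate for the jump process $N^\epsilon = Z^\epsilon/\epsilon$ (exploiting that jumps of $Z^\epsilon$ have size $\epsilon$), this yields exponential tightness on $D[0,T]$ in the Skorokhod topology; limit points will actually be continuous.

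For the lower bound, for a path $\eta \in \mathcal{AC}_0^+[0,T]$ with $I(\eta)<\infty$ I would introduce the Girsanov-tilted probability $Q^\epsilon_\eta$ under which $N^\epsilon$ has intensity $\eta'(t)/\epsilon$. Assumption \ref{AssumpV} ($\phi>0$) makes the tilt well-defined, and the log-likelihood is
\begin{equation*}
\log\frac{dQ^\epsilon_\eta}{dP^\epsilon} = \int_0^T \log\frac{\eta'(t)}{\phi\bigl(\int_0^{t-} h(t-s)\,dZ^\epsilon_s\bigr)}\, dN^\epsilon_t - \frac{1}{\epsilon}\int_0^T \bigl[\eta'(t) - \phi\bigl(\int_0^{t-} h(t-s)\,dZ^\epsilon_s\bigr)\bigr]\,dt.
\end{equation*}
Under $Q^\epsilon_\eta$ the law of large numbers for inhomogeneous Poisson processes gives $Z^\epsilon \to \eta$ uniformly in probability, so $\phi(\int_0^{t-} h(t-s)\,dZ^\epsilon_s) \to \phi(\int_0^t h(t-s)\,d\eta(s))$ uniformly, and $\epsilon\,\mathbb{E}_{Q^\epsilon_\eta}[\log dQ^\epsilon_\eta/dP^\epsilon] \to I(\eta)$. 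The standard relative-entropy / Jensen argument then yields $\liminf_\epsilon \epsilon\log \mathbb{P}(Z^\epsilon\in U)\ge -I(\eta)$ for every open neighborhood $U$ of $\eta$. A smooth-truncation and density argument (approximating general finite-rate $\eta$ by $C^1$ paths with $\eta'$ bounded away from $0$, using Assumption \ref{AssumpV}) extends this to all $\eta\in \mathcal{AC}_0^+[0,T]$.

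For the upper bound, I would use the exponential martingale
\begin{equation*}
M^\epsilon_t(\psi) = \exp\!\left(\int_0^t \psi(s)\,dN^\epsilon_s - \frac{1}{\epsilon}\int_0^t (e^{\psi(s)}-1)\,\phi\bigl(\int_0^{s-} h(s-u)\,dZ^\epsilon_u\bigr)\,du\right),
\end{equation*}
which is a mean-one martingale for bounded $\psi$. Plugging $\psi$ chosen (after a time discretization of $[0,T]$) as a piecewise approximation to $\log(\eta'(t)/\phi(\cdot))$ and applying Chebyshev give local upper bounds $\epsilon \log \mathbb{P}(Z^\epsilon \in B_\delta(\eta))\le -I(\eta)+o_\delta(1)$; combining with the exponential tightness of Step 1 yields the upper bound on all closed sets and closes the proof by the usual argument. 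The main obstacle is the path-dependent, self-consistent nature of the intensity $\phi(\int_0^{t-} h(t-s)\,dZ^\epsilon_s)$: to turn the exponential-martingale bounds into bounds involving the fixed-point rate function $I$, one must control the map $\eta\mapsto \phi\circ H(\eta)$ with $H(\eta)(t)=\int_0^t h(t-s)\,d\eta(s)$ along paths close to $\eta$. This is precisely where Assumption \ref{AssumpIII} (the contraction $\alpha\|h\|_{L^1[0,T]}<1$) becomes essential: it makes this map a strict contraction in the sup norm on $[0,T]$, which gives both uniqueness of the fixed-point equation governing the limit and uniform continuity estimates that let the discretization error be absorbed into $o(\epsilon^{-1})$; Assumption \ref{AssumpV} provides the regularity (smoothness of $h$ and positivity of $\phi$) needed to differentiate the candidate optimizer and make the approximations rigorous.
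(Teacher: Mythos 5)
Your overall skeleton (exponential tightness at speed $\epsilon^{-1}$ plus local upper and lower bounds via change of measure, with the jump size $\epsilon$ forcing continuous limits) matches the paper's architecture, but the way you obtain the local bounds is genuinely different from theirs. The paper proves a single two-sided \emph{local} LDP (Theorem \ref{localLDPThm}) by changing measure once, to the reference law $\tilde{\mathbb{P}}$ under which $N^{\epsilon}$ is a standard Poisson process of intensity $\epsilon^{-1}$; on the small ball $\{\sup_t |Z_t^\epsilon-\eta(t)|\le\delta\}$ the log Radon--Nikodym derivative is shown to be, up to $O(\delta/\epsilon)$, a deterministic functional of $\eta$, and the ball's probability under $\tilde{\mathbb{P}}$ is then read off from Mogulskii's LDP. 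This delivers both bounds simultaneously. You instead split: a Cram\'er/Jensen lower bound by tilting to the target intensity $\eta'(t)/\epsilon$ (so the tilted process concentrates on $\eta$ and the relative entropy converges to $I(\eta)$), and a Freidlin--Wentzell upper bound via exponential martingales with a discretized test function $\psi$. Both routes are valid; the paper's is more economical because it offloads the hard work to the known Poisson LDP and avoids the separate density/approximation step you need for the lower bound and the discretization bookkeeping for the upper bound.

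One substantive correction: your attribution of Assumption \ref{AssumpIII} to a "strict contraction in the sup norm" is not right. The sup-norm Lipschitz constant of $\eta\mapsto\int_0^{\cdot}h(\cdot-s)\,d\eta(s)$ on paths of bounded variation is governed by $|h(0)|+\|h'\|_{L^1[0,T]}$ (integrate by parts), which is supplied by Assumption \ref{AssumpV}/\ref{AssumpII}, not by $\alpha\|h\|_{L^1[0,T]}<1$. In the paper, Assumption \ref{AssumpIII} is used only to make the exponential moment estimate \eqref{thetaexp} go through -- i.e., to ensure $\theta-(e^\theta-1)\alpha\|h\|_{L^1[0,T]}>0$ for small $\theta$ -- and hence to prove the exponential tightness Lemmas \ref{exptightI} and \ref{exptightII}; it plays no role in the local LDP itself (Theorem \ref{localLDPThm} assumes only \ref{AssumpI} and \ref{AssumpV}). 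If you reframe Assumption \ref{AssumpIII} as the ingredient for exponential tightness and let Assumptions \ref{AssumpI} and \ref{AssumpV} drive the Lipschitz and positivity control inside the change-of-measure estimates, your outline becomes consistent with what is actually needed.
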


Instead of establishing a full large deviation principle in Theorem \ref{LDPThm} directly, 
our strategy is to first prove a local large deviation principle in Theorem \ref{localLDPThm}, with the main tool
being the change of measure technique for simple point processes. 
We then establish the exponential tightness in order to obtain a full large deviation principle.

%%%%%%%%%%%%%%%%%%%%%%%%%%%%%%%%%%%%%%%%%%%%%%%%%%%

We have the following local large deviation principle.

\begin{theorem}\label{localLDPThm}
Suppose Assumption \ref{AssumpI}
and Assumption \ref{AssumpV} hold.
For any $\eta\in D[0,T]$, 
\begin{equation}
\lim_{\delta\rightarrow 0}\lim_{\epsilon\rightarrow 0}
\epsilon\log
\mathbb{P}\left(\sup_{0\leq t\leq T}|Z_{t}^{\epsilon}-\eta(t)|\leq\delta\right)
=-I(\eta),
\end{equation}
where $I(\eta)$ is defined in \eqref{IEqn}.
\end{theorem}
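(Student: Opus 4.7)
The plan is to establish both halves of the local large deviation principle via a Girsanov-type change of measure for simple point processes. First dispose of trivial cases. Since $Z^\epsilon$ is non-decreasing with $Z_0^\epsilon = 0$, if $\eta(0) \neq 0$ or $\eta$ fails to be non-decreasing on some sub-interval, then for all sufficiently small $\delta$ the event $A_\delta := \{\sup_{t \in [0,T]} |Z_t^\epsilon - \eta(t)| \leq \delta\}$ is empty, giving $-\infty$ on both sides. If $\eta$ is non-decreasing with $\eta(0) = 0$ but is not absolutely continuous (so $I(\eta) = +\infty$), a super-exponential upper bound can be obtained by an $\epsilon$-dependent exponential tilt combined with an exponential Chebyshev inequality. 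For the main case $\eta \in \mathcal{AC}_0^+[0,T]$ with $I(\eta) < \infty$, I would first treat the case when $\eta'$ is bounded and bounded away from $0$; the general case would follow by an approximation argument addressed at the end.

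Define $\mathbb{Q}^\epsilon$ on $\mathcal{F}_T$ by the Radon--Nikodym derivative
\begin{equation*}
\frac{d\mathbb{Q}^\epsilon}{d\mathbb{P}} = \exp\left(\int_0^T \log U_t^\epsilon\, dN_t^\epsilon - \int_0^T (U_t^\epsilon - 1)\, \lambda_t^\epsilon\, dt\right),\qquad U_t^\epsilon := \frac{\eta'(t)}{\phi\left(\int_0^{t-} h(t-s)\, dZ_s^\epsilon\right)},
\end{equation*}
where $\lambda_t^\epsilon$ is the $\mathbb{P}$-intensity of $N^\epsilon$. Assumption \ref{AssumpV} ($\inf_{x\geq 0}\phi(x) > 0$) keeps $U^\epsilon$ bounded, so $\mathbb{Q}^\epsilon$ is a genuine probability measure, and the point-process Girsanov formula shows that under $\mathbb{Q}^\epsilon$, $N^\epsilon$ is an inhomogeneous Poisson process with deterministic intensity $\eta'(t)/\epsilon$. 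Hence $\mathbb{E}_{\mathbb{Q}^\epsilon}[Z_t^\epsilon] = \eta(t)$ and $\mathrm{Var}_{\mathbb{Q}^\epsilon}(Z_t^\epsilon) = \epsilon\,\eta(t) \to 0$, so a Doob maximal inequality yields $\mathbb{Q}^\epsilon(A_\delta) \to 1$ as $\epsilon \to 0$. Moreover, integration by parts together with the bounds on $|h(0)|$ and $\|h'\|_{L^\infty[0,T]}$ from Assumption \ref{AssumpV} and the Lipschitz property of $\phi$ (Assumption \ref{AssumpI}) yield the pathwise uniform estimate $|\phi(\int_0^{t-} h(t-s)\, dZ_s^\epsilon) - \phi_t| \leq C_h\delta$ on $A_\delta$, where $\phi_t := \phi(\int_0^t h(t-s)\, d\eta(s))$.

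For the lower bound, apply Jensen's inequality to $\mathbb{P}(A_\delta) = \mathbb{Q}^\epsilon(A_\delta)\,\mathbb{E}_{\mathbb{Q}^\epsilon}[d\mathbb{P}/d\mathbb{Q}^\epsilon \mid A_\delta]$ to obtain
\begin{equation*}
\epsilon \log \mathbb{P}(A_\delta) \geq \epsilon \log \mathbb{Q}^\epsilon(A_\delta) + \frac{\epsilon}{\mathbb{Q}^\epsilon(A_\delta)} \mathbb{E}_{\mathbb{Q}^\epsilon}\left[\mathbf{1}_{A_\delta}\log(d\mathbb{P}/d\mathbb{Q}^\epsilon)\right].
\end{equation*}
Using the deterministic $\mathbb{Q}^\epsilon$-intensity $\eta'(t)/\epsilon$ to evaluate the compensated stochastic integrals in expectation, combined with the pathwise estimate above, the right-hand side converges to $-I(\eta)$ upon sending $\epsilon \to 0$ and then $\delta \to 0$. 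For the upper bound, decompose $\log(d\mathbb{P}/d\mathbb{Q}^\epsilon) = -\tilde{M}_T^\epsilon - I(\eta)/\epsilon + R_\delta^\epsilon$, where $\tilde{M}^\epsilon$ is a $\mathbb{Q}^\epsilon$-compensated stochastic integral (a $\mathbb{Q}^\epsilon$-martingale) and $|R_\delta^\epsilon| \leq C\delta/\epsilon$ on $A_\delta$. The crucial point is that on $A_\delta$ the integrand $\log(\eta'(t)/\phi_t)$ of $\tilde{M}_T^\epsilon$ has bounded variation when $\eta'$ is bounded and bounded below, so integration by parts applied to $\frac{1}{\epsilon}\int_0^T \log(\eta'(t)/\phi_t)\, d(Z_t^\epsilon - \eta(t))$ yields the pathwise estimate $|\tilde{M}_T^\epsilon| \leq C'\delta/\epsilon$ on $A_\delta$. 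Substituting into $\mathbb{P}(A_\delta) = \mathbb{E}_{\mathbb{Q}^\epsilon}[\mathbf{1}_{A_\delta}(d\mathbb{P}/d\mathbb{Q}^\epsilon)]$ and taking $\epsilon \log$ gives the matching upper bound. The main obstacle is extending the upper bound from these nice paths to arbitrary $\eta \in \mathcal{AC}_0^+[0,T]$ with $I(\eta) < \infty$; this requires constructing approximants $\eta_n \to \eta$ uniformly with $I(\eta_n) \to I(\eta)$, obtained by truncating and mollifying $\eta'$, combined with the inclusion $A_\delta(\eta) \subseteq A_{\delta + \|\eta_n - \eta\|_\infty}(\eta_n)$ to transfer the estimate.
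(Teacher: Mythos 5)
Your approach is a Cram\'er-style exponential tilting: you change to a measure $\mathbb{Q}^\epsilon$ under which $Z^\epsilon$ concentrates around the target path $\eta$, and extract the rate from the Radon--Nikodym derivative. The paper does something structurally different and, in this setting, cleaner: it changes to a \emph{fixed} reference measure $\tilde{\mathbb{P}}$ (under which $N^\epsilon$ is a homogeneous Poisson process with rate $1/\epsilon$, independently of $\eta$), shows pathwise on $A_\delta$ that the exponent of $d\mathbb{P}/d\tilde{\mathbb{P}}$ equals $\frac{1}{\epsilon}\left[\int_0^T\log\phi\left(\int_0^t h(t-s)d\eta(s)\right)d\eta(t)-\int_0^T\left(\phi\left(\int_0^t h(t-s)d\eta(s)\right)-1\right)ds\right]+O(\delta/\epsilon)$, and then invokes the known Lynch--Sethuraman/Mogulskii local LDP for the scaled Poisson process under $\tilde{\mathbb{P}}$, with rate $I_{\mathrm{Pos}}(\eta)=\int_0^T\ell(\eta'(t);1)dt$. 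That black box handles \emph{all} nondecreasing $\eta$ at once (absolutely continuous or not, $I(\eta)$ finite or not), so no approximation of $\eta$ is ever needed, and the integration-by-parts step only involves $\log\phi\left(\int_0^t h(t-s)d\eta(s)\right)$, which has bounded variation automatically under Assumptions \ref{AssumpI} and \ref{AssumpV} because $\eta$ has finite variation and $h$ has bounded derivative.

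This difference is where your proof has concrete gaps. First, your pathwise upper-bound argument requires the integrand $\log\big(\eta'(t)/\phi_t\big)$ of the $\mathbb{Q}^\epsilon$-martingale to have bounded variation, and you justify this by saying $\eta'$ is ``bounded and bounded away from $0$.'' That is not enough: a function can be bounded and bounded below while having unbounded variation (e.g.\ $1+\tfrac12\sin(1/t)$ near $t=0$), so $\log\eta'$ need not be BV. Your ``nice'' class should additionally require $\eta'$ of bounded variation, which then makes the approximation step heavier. Second, the approximation argument itself is only a sketch, and the tension in it is real: the constant in your pathwise estimate $|\tilde{M}_T^\epsilon|\leq C'\delta/\epsilon$ blows up as $\|\eta_n'\|_\infty$, $1/\inf\eta_n'$, and $\mathrm{TV}(\eta_n')$ blow up, so transferring the upper bound $\limsup_\epsilon\epsilon\log\mathbb{P}(A_\delta(\eta))\leq -I(\eta_n)+c_n(\delta+\|\eta_n-\eta\|_\infty)$ to the limit requires showing one can choose $\eta_n$ so that $I(\eta_n)\to I(\eta)$ \emph{and} $c_n\|\eta_n-\eta\|_\infty\to 0$ simultaneously, which is not established. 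Third, you only discuss $I(\eta)=+\infty$ for non-absolutely-continuous $\eta$; an absolutely continuous $\eta$ with $\eta'\log\eta'\notin L^1$ also gives $I(\eta)=+\infty$, and your treatment does not cover that case. None of these issues arise in the paper's proof because the Mogulskii LDP is used as a black box over the whole path space. Your plan is salvageable, but in its current form it proves the result only for a restricted class of $\eta$ and asserts, rather than establishes, the extension to the general case.
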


Next, let us establish the exponential tightness
of the sequence $Z_{t}^{\epsilon}$ on $D[0,T]$.
The following Lemma \ref{exptightI} and Lemma \ref{exptightII},
together with the local large deviation principle will provide
us the full large deviation principle that is desired.

\begin{lemma}\label{exptightI}
Suppose Assumption \ref{AssumpI}, Assumption \ref{AssumpIII} hold. Then,
\begin{equation}
\limsup_{K\rightarrow\infty}\limsup_{\epsilon\rightarrow 0}\epsilon\log\mathbb{P}(Z_{T}^{\epsilon}\geq K)
=-\infty.
\end{equation}
\end{lemma}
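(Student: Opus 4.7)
The plan is a one-line Chebyshev bound combined with an exponential-moment estimate obtained from the intrinsic exponential martingale of the point process $N^\epsilon$; the finiteness of that exponential moment is forced by the strict subcriticality $\alpha\|h\|_{L^1[0,T]} < 1$ in Assumption \ref{AssumpIII}.

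The first step I would carry out is a deterministic pathwise upper bound on the integrated intensity. Since $\phi$ is $\alpha$-Lipschitz and nonnegative, $\phi(x) \leq \phi(0) + \alpha|x|$, so
$$
\lambda_s^\epsilon \leq \frac{\phi(0)}{\epsilon} + \alpha \int_0^{s-}|h(s-u)|\,dN_u^\epsilon,
$$
and an application of Fubini gives
$$
\int_0^T \lambda_s^\epsilon\,ds \leq \frac{\phi(0)T}{\epsilon} + a\,N_T^\epsilon, \qquad a := \alpha\|h\|_{L^1[0,T]} < 1.
$$
Next, for $\theta > 0$ I would introduce the standard exponential local martingale
$$
\mathcal{E}_t^\theta := \exp\!\Bigl(\theta N_t^\epsilon - (e^\theta-1)\int_0^t \lambda_s^\epsilon\,ds\Bigr),
$$
and use the fact that every nonnegative local martingale is a supermartingale to obtain $\mathbb{E}[\mathcal{E}_T^\theta] \leq 1$. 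Substituting the pathwise bound above into the exponent on the compensator side then yields
$$
\mathbb{E}\!\Bigl[\exp\!\bigl((\theta - (e^\theta-1)a)\,N_T^\epsilon\bigr)\Bigr] \leq \exp\!\Bigl(\frac{(e^\theta-1)\phi(0)T}{\epsilon}\Bigr).
$$

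The last step is the choice of $\theta$. Because $a < 1$, the map $\theta \mapsto \theta - (e^\theta-1)a$ has derivative $1-a > 0$ at the origin, so some small $\theta_0 > 0$ makes $c := \theta_0 - (e^{\theta_0}-1)a > 0$; set $C := (e^{\theta_0}-1)\phi(0)T$. Chebyshev's inequality then gives
$$
\mathbb{P}(Z_T^\epsilon \geq K) = \mathbb{P}(N_T^\epsilon \geq K/\epsilon) \leq e^{(C-cK)/\epsilon},
$$
whence $\epsilon \log \mathbb{P}(Z_T^\epsilon \geq K) \leq C - cK$ uniformly in $\epsilon$, and letting first $\epsilon \to 0$ and then $K \to \infty$ delivers the claim. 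The single genuinely delicate point, and the one I expect to require the most care, is the justification that $\mathcal{E}^\theta$ is truly a local (hence super-) martingale with $\mathcal{E}_0^\theta = 1$: since the intensity $\lambda_s^\epsilon$ is not deterministically bounded, one cannot cite a standard one-line criterion and must localize along $\tau_n := \inf\{t : N_t^\epsilon \geq n\}$, then appeal to Fatou together with the non-explosion of $N^\epsilon$ on $[0,T]$ (already implicit in Lemma \ref{Z1Bound}, which bounds $\mathbb{E}[N_T^\epsilon]$). Everything else is routine.
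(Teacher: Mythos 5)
Your proposal is correct and reproduces the paper's argument essentially verbatim: the paper also obtains the exponential moment bound $\mathbb{E}\bigl[e^{(\theta-(e^{\theta}-1)\alpha\Vert h\Vert_{L^{1}[0,T]})N_{T}^{\epsilon}}\bigr]\le e^{(e^{\theta}-1)\phi(0)T/\epsilon}$ (its display \eqref{thetaexp}) from the same positive exponential local martingale $\mapsto$ supermartingale argument combined with the Lipschitz bound on $\phi$ and Fubini inside the compensator, and then concludes by Chebyshev exactly as you do. Your closing remark on localization along $\tau_n=\inf\{t:N_t^\epsilon\ge n\}$ is a point the paper leaves implicit but is the right justification.
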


\begin{lemma}\label{exptightII}
Suppose Assumption \ref{AssumpI}, Assumption \ref{AssumpIII} hold.
For any $\delta>0$,
\begin{equation}
\limsup_{M\rightarrow\infty}\limsup_{\epsilon\rightarrow 0}\epsilon\log\mathbb{P}\left(\sup_{0\leq s\leq t\leq T,
|t-s|\leq\frac{1}{M}}|Z_{t}^{\epsilon}-Z_{s}^{\epsilon}|\geq\delta\right)
=-\infty.
\end{equation}
\end{lemma}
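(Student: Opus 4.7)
The plan is to combine the exponential tail bound from Lemma \ref{exptightI} with a standard exponential martingale estimate for counting processes applied to $N^\epsilon$ stopped at the first time $Z^\epsilon$ exceeds a large threshold. On the stopped event the intensity is deterministically bounded, so Markov's inequality at speed $\epsilon^{-1}$ yields a Poisson-type large deviation bound on each piece of a fine partition of $[0,T]$; the modulus of continuity is then controlled by a union bound together with the monotonicity of $Z^\epsilon$.

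First I would introduce the stopping time $\tau_K:=\inf\{t\geq 0:Z_t^\epsilon>K\}$ and observe that, because $Z^\epsilon$ is nondecreasing, $\{\tau_K>T\}=\{Z_T^\epsilon\leq K\}$. On this event, Assumption \ref{AssumpI} gives
$$
\phi\Bigl(\int_0^{t-}h(t-s)\,dZ_s^\epsilon\Bigr)\leq \phi(0)+\alpha\int_0^{t-}|h(t-s)|\,dZ_s^\epsilon\leq \phi(0)+\alpha\|h\|_{L^\infty[0,T]}K=:C(K),
$$
so the intensity $\lambda_t^\epsilon$ of $N^\epsilon$ is bounded by $C(K)/\epsilon$ on $[0,T\wedge\tau_K]$. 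Next, partition $[0,T]$ into intervals $[t_i,t_{i+1}]$ of length $1/M$ with $t_i=i/M$. Since $Z^\epsilon$ is nondecreasing and any pair $s\leq t$ with $|t-s|\leq 1/M$ is contained in at most two consecutive subintervals,
$$
\sup_{\substack{0\leq s\leq t\leq T\\ |t-s|\leq 1/M}}(Z_t^\epsilon-Z_s^\epsilon)\leq \max_{0\leq i\leq \lceil TM\rceil-2}\bigl(Z_{t_{i+2}}^\epsilon-Z_{t_i}^\epsilon\bigr),
$$
with $t_{i+2}-t_i=2/M$, and a splitting on $\{\tau_K>T\}$ yields
$$
\mathbb{P}\Bigl(\sup_{|t-s|\leq 1/M}|Z_t^\epsilon-Z_s^\epsilon|\geq\delta\Bigr)\leq \mathbb{P}(Z_T^\epsilon>K)+\sum_i\mathbb{P}\bigl(Z_{t_{i+2}}^\epsilon-Z_{t_i}^\epsilon\geq\delta,\ \tau_K>T\bigr).
$$

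For each summand on the right I would apply the exponential martingale
$$
L_t^\theta:=\exp\Bigl(\theta N_{t\wedge\tau_K}^\epsilon-\int_0^{t\wedge\tau_K}\lambda_u^\epsilon(e^\theta-1)\,du\Bigr),\qquad \theta>0,
$$
which is a genuine martingale on $[0,T]$ because the stopped intensity and counting process are bounded. Combining Markov's inequality with the deterministic bound $\int_{t_i}^{t_{i+2}}\lambda_u^\epsilon\,du\leq 2C(K)/(M\epsilon)$ valid on $\{\tau_K>T\}$ yields, for every $\theta>0$,
$$
\mathbb{P}\bigl(Z_{t_{i+2}}^\epsilon-Z_{t_i}^\epsilon\geq\delta,\ \tau_K>T\bigr)\leq \exp\Bigl(\frac{1}{\epsilon}\Bigl[-\theta\delta+\frac{2C(K)}{M}(e^\theta-1)\Bigr]\Bigr).
$$
Optimizing over $\theta$ produces the Poisson rate-function bound $\exp(-\ell(\delta;2C(K)/M)/\epsilon)$ with $\ell$ as in \eqref{ellEqn}. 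The union bound over the $O(TM)$ subintervals contributes a prefactor whose logarithm is absorbed by $\epsilon$, so for each fixed $K$,
$$
\limsup_{\epsilon\to 0}\epsilon\log\mathbb{P}\Bigl(\sup_{|t-s|\leq 1/M}|Z_t^\epsilon-Z_s^\epsilon|\geq\delta\Bigr)\leq \max\Bigl(\limsup_{\epsilon\to 0}\epsilon\log\mathbb{P}(Z_T^\epsilon>K),\ -\ell(\delta;2C(K)/M)\Bigr).
$$
Taking $\limsup_{M\to\infty}$ removes the second term since $\ell(\delta;y)\to\infty$ as $y\downarrow 0$. Because the left-hand side is independent of $K$, letting $K\to\infty$ and invoking Lemma \ref{exptightI} finishes the proof.

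The main obstacle is the interplay between $K$ and $M$: the intensity bound $C(K)$ degrades as $K\uparrow\infty$, while the Poisson large-deviation exponent $\ell(\delta;2C(K)/M)$ needs $M$ large relative to $C(K)$. Organizing the order of limits — $\limsup_M$ first and $K\to\infty$ afterwards via Lemma \ref{exptightI} — resolves this cleanly and is the only non-routine step. Verifying that $L^\theta$ is a true martingale rather than merely a local one is immediate from the deterministic boundedness of the stopped intensity.
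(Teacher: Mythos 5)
Your proof is correct and takes a genuinely different route from the paper's. The paper avoids stopping times entirely: it writes the union bound over single subintervals with threshold $\delta/2$, applies the exponential supermartingale to $N^{\epsilon}_{j/M}-N^{\epsilon}_{(j-1)/M}$ together with a Cauchy--Schwarz trick that splits off the global term $\exp\bigl((e^{\theta}-1)\alpha\Vert h\Vert_{L^{\infty}[0,T]}N_{T}^{\epsilon}/M\bigr)$, and then controls this global factor by the exponential moment bound $\mathbb{E}[e^{\iota N_{T}^{\epsilon}}]\leq e^{C(\iota)/\epsilon}$ derived inside the proof of Lemma \ref{exptightI}; the careful choice $\theta=\log(1+\gamma M)$ is what keeps the argument of $C(\cdot)$ uniformly small as $M\to\infty$. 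You instead localize with the stopping time $\tau_{K}$, which makes the intensity deterministically bounded on $\{\tau_{K}>T\}$, so that the stopped exponential martingale gives the clean Poisson rate-function bound $\exp\bigl(-\ell(\delta;2C(K)/M)/\epsilon\bigr)$ without any coupling between $\theta$ and $M$; the price is the extra term $\mathbb{P}(Z_{T}^{\epsilon}>K)$, which you correctly dispatch by invoking Lemma \ref{exptightI} as a black box after sending $M\to\infty$ first and $K\to\infty$ second. Your version is more modular (it reuses Lemma \ref{exptightI} directly rather than re-deriving its ingredients) and avoids the somewhat ad hoc choice of $\theta$; the paper's version avoids stopping-time machinery at the cost of a Cauchy--Schwarz split and a more delicate tuning of parameters. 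Both are correct; the order of limits you highlight at the end is indeed the one non-routine point and you handle it properly.
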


\begin{remark} 
Theorem \ref{localLDPThm}, Lemma \ref{exptightI} and Lemma \ref{exptightII}  provide
actually the large deviation principle for $Z_{t}^{\epsilon}$ with respect to the uniform topology on $D[0,T]$, see e.g. Lemma A.1 in \cite{DGWU}, or Theorem 4.14 \cite{FK}.
\end{remark}

\begin{remark}
In \cite{Bordenave}, they obtained a sample path large deviation principle
for the large time scaling for Poisson cluster processes. More precisely,
the linear Hawkes process with $\phi(x)=\nu+x$, as a special case of the Poisson cluster process, has
the sample path large deviation principle that $\mathbb{P}(\frac{N_{n\cdot}}{n}\in\cdot)$ satisfies
a large deviation principle on $D[0,T]$ equipped with the topology of point-wise convergence
with the speed $n$ and the rate function $\int_{0}^{T}\mathcal{I}(f'(t))dt$ if $f\in\mathcal{AC}_{0}[0,T]$ and $+\infty$
otherwise, where
\begin{equation}\label{largetimeLDP}
\mathcal{I}(x):=x\log\left(\frac{x}{\nu+x\Vert h\Vert_{L^{1}}}\right)-x+x\Vert h\Vert_{L^{1}}+\nu,
\end{equation}
for $x\geq 0$ and $+\infty$ otherwise.
Note that since the assumption (37) in \cite{Bordenave} is not satisfied for the linear Hawkes process,
their large deviations results apply to the topology of point-wise convergence, but not the uniform topology.
Our results in Theorem \ref{LDPThm} differ in two ways. First, our rate function depends on the entire
function $h(t)$, $0\leq t\leq T$, rather than $\Vert h\Vert_{L^{1}}$ as in \eqref{largetimeLDP}. Second, we allow uniform topology
for the sample path large deviation principle.
\end{remark}

%%%%%%%%%%%%%%%%%%%%%%%%%%%%%%%%%%%%%%%%%%%%%%%%%%%%%%%%%%%%%%%%%%%%

\subsection{Moderate Deviations}\label{MDPSection}

In this section, we are interested in the moderate deviations
for $Z_{t}^{\epsilon}$. The moderate deviation principle fills
in the gap between the central limit theorem and the large deviation principle.
For a brief introduction to moderate deviations, we refer to Chap. 3.7. in Dembo and Zeitouni \cite{Dembo}.

Our approach to the proof of the moderate deviations is similar
to that of the large deviations. That is, we first establish
a local moderate deviation principle by using the change of measure technique, i.e. Theorem \ref{localMDPThm},
and then establish the appropriate exponential tightness estimates, i.e. Lemma \ref{MDPexptightI} and Lemma \ref{MDPexptightII}.

Our main result is the following:

\begin{theorem}\label{MDPThm}
Suppose Assumption \ref{AssumpI}, Assumption \ref{AssumpII}, Assumption \ref{AssumpIII},
Assumption \ref{AssumpIV} and Assumption \ref{AssumpV} hold.
Let $a(\epsilon)$ be a positive sequence such that $a(\epsilon),\frac{\epsilon}{a(\epsilon)^{2}}\rightarrow 0$
as $\epsilon\rightarrow 0$.
Then, $\mathbb{P}(\frac{Z_{t}^{\epsilon}-Z_t^0}{a(\epsilon)}\in\cdot)$ satisfies a large deviation
principle on $D[0,T]$ equipped with Skorokhod topology
with speed $\frac{a(\epsilon)^{2}}{\epsilon}$ and the rate function
\begin{equation}\label{JEqn}
J(\eta):=\frac{1}{2}\int_{0}^{T}\frac{\left(\eta'(t)-\phi'\left(\int_{0}^{t}h(t-u)dZ_{u}^{0}\right)
\int_{0}^{t}h(t-u)d\eta_{u}\right)^2}{\phi\left(\int_{0}^{t}h(t-u)dZ_{u}^{0}\right)} dt,
\end{equation}
if $\eta\in\mathcal{AC}_{0}[0,T]$ and $+\infty$ otherwise.
\end{theorem}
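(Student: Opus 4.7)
The proof plan mirrors the structure used for Theorem~\ref{LDPThm}. I would first establish a local moderate deviation principle (Theorem~\ref{localMDPThm}) via a change-of-measure argument for simple point processes, and then verify exponential tightness for $Y^{\epsilon}_{t} := (Z^{\epsilon}_{t}-Z^{0}_{t})/a(\epsilon)$ at the MDP speed $b(\epsilon):=a(\epsilon)^{2}/\epsilon$, as asserted in Lemmas~\ref{MDPexptightI} and~\ref{MDPexptightII}. Combining these two ingredients in the standard way (cf.\ the remark following Lemma~\ref{exptightII}) delivers the full MDP on $D[0,T]$ with the uniform topology, and hence \emph{a fortiori} with the Skorokhod topology.

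For the local MDP, given $\eta\in\mathcal{AC}_{0}[0,T]$ with $J(\eta)<\infty$, set
\[
\phi^{0}(t) := \phi\!\Bigl(\int_{0}^{t}h(t-u)\,dZ^{0}_{u}\Bigr), \qquad
\zeta(t) := \eta'(t) - \phi'\!\Bigl(\int_{0}^{t}h(t-u)\,dZ^{0}_{u}\Bigr)\int_{0}^{t}h(t-u)\,d\eta(u),
\]
so that $2J(\eta)=\int_{0}^{T}\zeta(t)^{2}/\phi^{0}(t)\,dt$. I would introduce the tilted measure $\tilde{\mathbb{P}}^{\epsilon}$ on $\mathcal{F}_{T}$ via the Br\'emaud--Girsanov density
\[
\frac{d\tilde{\mathbb{P}}^{\epsilon}}{d\mathbb{P}} = \exp\!\left(\int_{0}^{T}\log\!\left(1+\frac{a(\epsilon)\,\zeta(t)}{\phi\bigl(\int_{0}^{t}h(t-u)\,dZ^{\epsilon}_{u}\bigr)}\right)dN^{\epsilon}_{t} - \frac{a(\epsilon)}{\epsilon}\int_{0}^{T}\zeta(t)\,dt\right),
\]
with $N^{\epsilon}_{t}:=Z^{\epsilon}_{t}/\epsilon$; Assumption~\ref{AssumpV} makes the denominator bounded away from $0$, so the density is well defined for small $\epsilon$. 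Under $\tilde{\mathbb{P}}^{\epsilon}$, the $\mathcal{F}_{t}$-intensity of $N^{\epsilon}$ becomes $\epsilon^{-1}(\phi(\int h\,dZ^{\epsilon})+a(\epsilon)\zeta(t))$, so $Z^{\epsilon}$ acquires an additional drift $a(\epsilon)\zeta(t)\,dt$. A law-of-large-numbers argument (analogous to the convergence $Z^{\epsilon}\to Z^{0}$ underlying Theorem~\ref{CLTThm}), using Assumption~\ref{AssumpIV} for a Taylor expansion of $\phi$ and Assumption~\ref{AssumpII} for the integration by parts $\int_{0}^{t}h(t-u)\,dY^{\epsilon}_{u}=h(0)Y^{\epsilon}_{t}+\int_{0}^{t}h'(t-u)Y^{\epsilon}_{u}\,du$, shows $Y^{\epsilon}\to\eta$ uniformly on $[0,T]$ in $\tilde{\mathbb{P}}^{\epsilon}$-probability. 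Expanding $\log(1+x)=x-x^{2}/2+O(x^{3})$ and taking $\tilde{\mathbb{P}}^{\epsilon}$-expectations then yields
\[
\epsilon\,\mathbb{E}_{\tilde{\mathbb{P}}^{\epsilon}}\!\left[\log\frac{d\tilde{\mathbb{P}}^{\epsilon}}{d\mathbb{P}}\right] = \frac{a(\epsilon)^{2}}{2}\int_{0}^{T}\frac{\zeta(t)^{2}}{\phi^{0}(t)}\,dt+o\bigl(a(\epsilon)^{2}\bigr),
\]
and combining this with Jensen's inequality and $\tilde{\mathbb{P}}^{\epsilon}(\sup_{t}|Y^{\epsilon}-\eta|\le\delta)\to 1$ identifies the local MDP with rate $J(\eta)$. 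Paths with $J(\eta)=+\infty$ are handled separately by a monotone approximation or by noting that the local limit is then $-\infty$.

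The exponential tightness estimates are proved from the compensated martingale $M^{\epsilon}_{t}:=Z^{\epsilon}_{t}-\int_{0}^{t}\phi(\int_{0}^{s}h(s-u)\,dZ^{\epsilon}_{u})\,du$, whose jumps have size $\epsilon$. Exponential Markov inequalities of Lepingle--M\'emin type provide moment controls for $M^{\epsilon}/a(\epsilon)$ at speed $b(\epsilon)$; Lemma~\ref{MDPexptightI} follows by closing a Gronwall-type inequality under the strict contraction $\alpha\|h\|_{L^{1}[0,T]}<1$ of Assumption~\ref{AssumpIII}. Lemma~\ref{MDPexptightII} is obtained by the same machinery applied on short subintervals, with the modulus-of-continuity estimate supplied by the integration-by-parts identity above and the uniform bound on $h'$ from Assumption~\ref{AssumpV}.

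The main obstacle is the local MDP, and specifically the need to show that the cubic residual $\int_{0}^{T}|a(\epsilon)\zeta(t)/\phi(\int h\,dZ^{\epsilon})|^{3}\,dN^{\epsilon}_{t}$ arising from the Taylor expansion of $\log(d\tilde{\mathbb{P}}^{\epsilon}/d\mathbb{P})$ is $o(a(\epsilon)^{2}/\epsilon)$ under both $\mathbb{P}$ and $\tilde{\mathbb{P}}^{\epsilon}$. This requires exponential-moment refinements of Lemma~\ref{Z1Bound} and uses critically the lower bound $\inf\phi>0$ from Assumption~\ref{AssumpV} to control the denominator uniformly in $\epsilon$ and in the path. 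A related subtlety is ensuring that the quadratic term assembles into exactly $\int(\eta'-\phi'(\cdots)\int h\,d\eta)^{2}/\phi^{0}\,dt$ rather than the formally similar expression with $\phi(\int h\,dZ^{\epsilon})$ in the denominator; this passage is made rigorous by using that under $\tilde{\mathbb{P}}^{\epsilon}$ the process $Y^{\epsilon}$ concentrates at $\eta$, so $\phi(\int h\,dZ^{\epsilon})\to\phi^{0}$ uniformly at a rate $o(1)$ as $\epsilon\to 0$.
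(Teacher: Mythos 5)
Your overall scheme --- local MDP via a change of measure, plus the two exponential-tightness estimates at speed $a(\epsilon)^2/\epsilon$, combined through the standard inverse-contraction lemma --- matches the paper's structure. However, the route you propose for the local MDP (Theorem~\ref{localMDPThm}) is genuinely different from the paper's. The paper changes to a \emph{fixed} reference measure $\tilde{\mathbb{P}}$ under which $N^{\epsilon}$ is an inhomogeneous Poisson process with intensity $\frac{1}{\epsilon}\phi\bigl(\int_{0}^{t}h(t-s)\,dZ^{0}_{s}\bigr)$, not depending on the target path $\eta$; after a deterministic time change it invokes Mogulskii's MDP for processes with independent increments to compute $\tilde{\mathbb{P}}(\text{tube around }\eta^{\epsilon})$, and then controls the Radon--Nikodym derivative $d\mathbb{P}/d\tilde{\mathbb{P}}$ pointwise on that tube by four integration-by-parts estimates. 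You instead tilt to an $\eta$-\emph{dependent} measure $\tilde{\mathbb{P}}^{\epsilon}$ whose intensity is shifted by $a(\epsilon)\zeta(t)/\epsilon$, so that $Y^{\epsilon}\to\eta$ under the tilt and all of the exponential cost sits in the RN density; this is the classical Cram\'er exponential-tilting route, where the reference measure's tube probability tends to $1$ instead of being evaluated by Mogulskii. Both routes are sound, and both need, at the technical heart, the same kind of pointwise control of $\log(d\tilde{\cdot}/d\mathbb{P})$ restricted to the tube; the paper buys itself a known functional MDP for the reference, while your tilt buys concentration $\tilde{\mathbb{P}}^{\epsilon}(\text{tube})\to 1$.

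One step is under-specified as written: ``Jensen's inequality and $\tilde{\mathbb{P}}^{\epsilon}(\cdot)\to 1$ identifies the local MDP.'' Jensen applied to the conditioned measure gives only the \emph{lower} bound,
\[
\log\mathbb{P}(A)\;\ge\;\log\tilde{\mathbb{P}}^{\epsilon}(A)\;-\;\tilde{\mathbb{E}}^{\epsilon}\!\left[\log\frac{d\tilde{\mathbb{P}}^{\epsilon}}{d\mathbb{P}}\,\Big|\,A\right],
\]
where $A$ is the $\delta$-tube. For the matching \emph{upper} bound you cannot replace the RN density by its expectation; you must show it is bounded above essentially uniformly on $A$, i.e., that $\log\frac{d\mathbb{P}}{d\tilde{\mathbb{P}}^{\epsilon}}\le -(J(\eta)-o_{\delta}(1))\,a(\epsilon)^{2}/\epsilon$ on $A$. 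This requires the same integration-by-parts and second-order Taylor estimates (using Assumptions~\ref{AssumpII},~\ref{AssumpIV},~\ref{AssumpV}) that the paper carries out explicitly in \eqref{estimate1}--\eqref{estimate4}; you gesture at this in your last paragraph (``$\phi(\int h\,dZ^{\epsilon})\to\phi^{0}$ uniformly''), so the idea is present, but the phrase ``Jensen's inequality'' does not deliver the upper half of the local limit and that direction needs to be made explicit. The exponential-tightness sketch (Lepingle--M\'emin exponential martingale bounds, Gronwall under $\alpha\|h\|_{L^{1}[0,T]}<1$, and the partitioned-interval argument) matches what the paper does for Lemmas~\ref{MDPexptightI} and~\ref{MDPexptightII}.
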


We first establish a local moderate deviation principle:

%%%%%%%%%%%%%%%%%%%%%%%%%%%%%%%%%%%%%%%%%%%%%%%%%
\begin{theorem}\label{localMDPThm}
Suppose Assumption \ref{AssumpI}, 
Assumption \ref{AssumpIV} and Assumption \ref{AssumpV} hold.
For any $\eta\in D[0,T]$,
$$\begin{aligned}
&\lim_{\delta\rightarrow 0}\lim_{\epsilon\rightarrow 0}
\frac{\epsilon}{a^2(\epsilon)}\log
\mathbb{P}\left(\sup_{0\leq t\leq T}\left|\frac{Z_{t}^{\epsilon}-Z_t^0}{a(\epsilon)}-\eta_t\right|\leq\delta\right)
=-J(\eta),
\end{aligned}
$$
where $J(\eta)$ is given in \eqref{JEqn}.
\end{theorem}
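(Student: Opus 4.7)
The plan is to adapt the change-of-measure strategy used for the local large deviation result (Theorem~\ref{localLDPThm}) to the moderate deviation scale. Set $\mu(s):=\int_{0}^{s}h(s-u)dZ_{u}^{0}$ and
$$
g(s):=\eta'(s)-\phi'(\mu(s))\int_{0}^{s}h(s-u)d\eta(u),
$$
chosen so that the additional drift needed to tilt $Z^{\epsilon}$ toward $Z^{0}+a(\epsilon)\eta$ is exactly $a(\epsilon)g$. Introduce the tilted law $\tilde{\mathbb{P}}^{\epsilon}$ under which $N^{\epsilon}$ has intensity $\tilde\lambda_{s}^{\epsilon}:=\lambda_{s}^{\epsilon}+\frac{a(\epsilon)}{\epsilon}g(s)$, positive for small $\epsilon$ thanks to $\inf_{x\geq 0}\phi(x)>0$ (Assumption~\ref{AssumpV}). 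The Girsanov-type formula for simple point processes then gives an explicit density $L_{T}^{\epsilon}$.

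Taylor-expanding $\log(1+a(\epsilon)u)$ with $u:=g/\bar\phi$ and $\bar\phi(s):=\phi(\int_{0}^{s-}h(s-u)dZ_{u}^{\epsilon})$, and decomposing $dN^{\epsilon}$ via its $\tilde{\mathbb{P}}^{\epsilon}$-compensator $\tilde\lambda^{\epsilon}$, gives
$$
\log L_{T}^{\epsilon}=\frac{a(\epsilon)^{2}}{2\epsilon}\int_{0}^{T}\frac{g(s)^{2}}{\bar\phi(s)}ds+a(\epsilon)\int_{0}^{T}\frac{g(s)}{\bar\phi(s)}d\tilde M_{s}^{\epsilon}+R_{\epsilon},
$$
with $R_{\epsilon}$ the Taylor remainder of order $a(\epsilon)^{3}/\epsilon$. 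On the local event $A_{\delta,\epsilon}:=\{\sup_{t\leq T}|Y_{t}^{\epsilon}-\eta_{t}|\leq\delta\}$ with $Y^{\epsilon}:=(Z^{\epsilon}-Z^{0})/a(\epsilon)$, Assumptions~\ref{AssumpIV}--\ref{AssumpV} allow $\bar\phi(s)$ to be replaced by $\phi(\mu(s))$ up to $O(a(\epsilon)+\delta)$, so the leading term equals $(a(\epsilon)^{2}/\epsilon)J(\eta)+o(a(\epsilon)^{2}/\epsilon)$, while the stochastic integral has predictable quadratic variation $O(a(\epsilon)^{2}/\epsilon)$ and hence typical size $a(\epsilon)/\sqrt{\epsilon}=o(a(\epsilon)^{2}/\epsilon)$.

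For the lower bound, restrict to $G_{\epsilon}:=\{\log L_{T}^{\epsilon}\leq(a(\epsilon)^{2}/\epsilon)(J(\eta)+c(\delta))\}$ on which $(L_{T}^{\epsilon})^{-1}\geq e^{-(a(\epsilon)^{2}/\epsilon)(J(\eta)+c(\delta))}$, and estimate
$$
\mathbb{P}(A_{\delta,\epsilon})=\tilde{\mathbb{E}}[1_{A_{\delta,\epsilon}}(L_{T}^{\epsilon})^{-1}]\geq e^{-(a(\epsilon)^{2}/\epsilon)(J(\eta)+c(\delta))}\,\tilde{\mathbb{P}}^{\epsilon}\bigl(A_{\delta,\epsilon}\cap G_{\epsilon}\bigr).
$$
The convergence $\tilde{\mathbb{P}}^{\epsilon}(A_{\delta,\epsilon}\cap G_{\epsilon})\to 1$ as $\epsilon\to 0$ uses (i) a law of large numbers for $Y^{\epsilon}$ under $\tilde{\mathbb{P}}^{\epsilon}$, which follows from the tilted dynamics $dZ_{t}^{\epsilon}=\phi(\int_{0}^{t-}h(t-u)dZ_{u}^{\epsilon})dt+a(\epsilon)g(t)dt+\epsilon\,d\tilde M_{t}^{\epsilon}$ together with a Gronwall argument and adaptations of Lemmas~\ref{Z1Bound}--\ref{X2Bound}, and (ii) concentration of $\log L_{T}^{\epsilon}$ around its mean via the martingale bound above. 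Sending $\delta\to 0$ then yields $\liminf_{\epsilon\to 0}\frac{\epsilon}{a(\epsilon)^{2}}\log\mathbb{P}(A_{\delta,\epsilon})\geq -J(\eta)$.

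For the upper bound, split $A_{\delta,\epsilon}$ along the event $G_{\epsilon}':=\{\log L_{T}^{\epsilon}\geq(a(\epsilon)^{2}/\epsilon)(J(\eta)-c(\delta))\}$: on $A_{\delta,\epsilon}\cap G_{\epsilon}'$ one has $(L_{T}^{\epsilon})^{-1}\leq e^{-(a(\epsilon)^{2}/\epsilon)(J(\eta)-c(\delta))}$, giving directly $\mathbb{P}(A_{\delta,\epsilon}\cap G_{\epsilon}')\leq e^{-(a(\epsilon)^{2}/\epsilon)(J(\eta)-c(\delta))}$. The main technical obstacle is to show that $\mathbb{P}(A_{\delta,\epsilon}\cap(G_{\epsilon}')^{c})$ is superexponentially small at the scale $a(\epsilon)^{2}/\epsilon$: one redoes the expansion of $\log L_{T}^{\epsilon}$ under $\mathbb{P}$ via $dN^{\epsilon}=\lambda^{\epsilon}ds+dM^{\epsilon}$ and exploits the conditional structure imposed by $A_{\delta,\epsilon}$, and then applies Bernstein-type exponential inequalities to the jump martingale $\int_{0}^{T}(g/\bar\phi)dM^{\epsilon}$. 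The delicate point is that $\bar\phi$ is path-dependent through the self-exciting kernel $h$ and $\lambda^{\epsilon}=O(\epsilon^{-1})$, so the exponential estimates must be chained with a Gronwall-type iteration in order to close the feedback loop at the correct speed $a(\epsilon)^{2}/\epsilon$. Once both bounds are established, taking $\delta\to 0$ gives the stated limit $-J(\eta)$; for $\eta\notin\mathcal{AC}_{0}[0,T]$, in which case $J(\eta)=\infty$, one shows by comparison with arbitrary admissible paths that $\mathbb{P}(A_{\delta,\epsilon})$ decays faster than any $e^{-Ca(\epsilon)^{2}/\epsilon}$.
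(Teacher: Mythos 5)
Your strategy is a genuine alternative to the paper's: you perform a Cameron--Martin style tilt on the \emph{Hawkes} intensity itself, $\tilde\lambda^{\epsilon}_{s}=\lambda^{\epsilon}_{s}+\frac{a(\epsilon)}{\epsilon}g(s)$, so the tilted process still carries the self-exciting feedback through $\bar\phi$. The paper instead changes measure to an \emph{inhomogeneous Poisson} process with the deterministic intensity $\frac{1}{\epsilon}\phi\bigl(\int_{0}^{t}h(t-s)dZ^{0}_{s}\bigr)$, after which $Z^{\epsilon}-Z^{0}$ under $\tilde{\mathbb{P}}$ is (in law) a deterministic time change $Y^{\epsilon}(Z_{t}^{0})$ of the centered scaled Poisson process $Y^{\epsilon}(t)=\epsilon\bar N^{\epsilon}_{t}-t$; the local MDP is then read off directly from Mogulskii's classical result, and the Radon--Nikodym density is handled by the estimates \eqref{estimate1}--\eqref{estimate4}, which are deterministic up to an $O(\delta a^{2}(\epsilon)/\epsilon)$ error on the pinned event. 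This decoupling is precisely what makes the paper's argument close without any new concentration inequality.

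The gap in your proposal is that the two technical steps you yourself flag as ``the delicate point'' are described but not carried out, and they are the substance of the theorem under your approach. (i) For the upper bound you need $\mathbb{P}\bigl(A_{\delta,\epsilon}\cap(G'_{\epsilon})^{c}\bigr)$ to vanish faster than any $e^{-Ca^{2}(\epsilon)/\epsilon}$; sketching ``Bernstein-type exponential inequalities chained with a Gronwall-type iteration'' names the intended tools but does not show that the chaining closes at speed $a^{2}(\epsilon)/\epsilon$, and the path dependence of $\bar\phi$ through $h$ together with $\lambda^{\epsilon}=O(\epsilon^{-1})$ is exactly why this is non-trivial. (ii) For the lower bound you assert $\tilde{\mathbb{P}}^{\epsilon}(A_{\delta,\epsilon}\cap G_{\epsilon})\to 1$ via an LLN for $Y^{\epsilon}$ under the tilted law, but Lemmas~\ref{Z1Bound}--\ref{X2Bound} are stated and proved under $\mathbb{P}$, not under $\tilde{\mathbb{P}}^{\epsilon}$, and the tilted intensity is not of Hawkes form, so they do not apply verbatim; a separate Gronwall/martingale argument for the tilted system is required and is not given. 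You also need $\eta\in\mathcal{AC}_{0}[0,T]$ for $g=\eta'-\phi'(\mu)\int h\,d\eta$ to even be defined, so the case $J(\eta)=\infty$ cannot be handled ``by comparison with admissible paths'' within this framework without an additional argument; the paper handles it by first restricting to $\eta$ of finite variation and then observing that the rate obtained from Mogulskii is $+\infty$ unless $\eta\in\mathcal{AC}_{0}[0,T]$. Until (i) and (ii) are actually established, the proposal is an outline rather than a proof.
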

%%%%%%%%%%%%%%%%%%%%%%%%%%%%%%%%%%%%%%%%%%%%%%%%%%%%%%%%%%%%%
Next, we establish the exponential tightness
of the sequence $\frac{Z_{t}^{\epsilon}-Z_{t}^{0}}{a(\epsilon)}$ on $D[0,T]$
in the following lemmas.

\begin{lemma}\label{MDPexptightI}
Suppose Assumption \ref{AssumpI}, Assumption \ref{AssumpII}, Assumption \ref{AssumpIII} hold.
\begin{equation}
\limsup_{K\rightarrow\infty}\limsup_{\epsilon\rightarrow 0}
\frac{\epsilon}{a(\epsilon)^{2}}\log\mathbb{P}\left(\sup_{0\leq t\leq T}|Z_{t}^{\epsilon}-Z_{t}^{0}|\geq Ka(\epsilon)\right)
=-\infty.
\end{equation}
\end{lemma}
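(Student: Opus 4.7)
The plan is to decompose $Z^{\epsilon}_{t}-Z^{0}_{t}$ into its martingale and compensator parts, reduce via Gronwall to an exponential tail bound for the martingale, and then apply the standard exponential martingale for point processes together with a stopping-time localization that makes the compensator deterministic.

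Set
\[
\Pi_{s}^{\epsilon}:=\int_{0}^{s-}h(s-u)\,dZ_{u}^{\epsilon},\qquad A_{t}^{\epsilon}:=\int_{0}^{t}\phi(\Pi_{s}^{\epsilon})\,ds,\qquad M_{t}^{\epsilon}:=Z_{t}^{\epsilon}-A_{t}^{\epsilon},
\]
so that $M^{\epsilon}$ is a martingale with jumps of size $\epsilon$. The $\alpha$-Lipschitz property of $\phi$, combined with integration by parts (which uses Assumption \ref{AssumpII} to rewrite $\int_{0}^{s}h(s-u)\,dF(u)=h(0)F(s)+\int_{0}^{s}h'(s-u)F(u)\,du$ for $F$ càdlàg with $F(0)=0$), yields
\[
|Z_{t}^{\epsilon}-Z_{t}^{0}|\le|M_{t}^{\epsilon}|+\alpha\kappa\int_{0}^{t}\sup_{u\le s}|Z_{u}^{\epsilon}-Z_{u}^{0}|\,ds,\qquad\kappa:=|h(0)|+\|h'\|_{L^{1}[0,T]}.
\]
Gronwall's inequality then gives $\sup_{t\le T}|Z_{t}^{\epsilon}-Z_{t}^{0}|\le e^{\alpha\kappa T}\sup_{s\le T}|M_{s}^{\epsilon}|$, which reduces the claim to the analogous MDP-scale tail bound for $\sup_{s\le T}|M_{s}^{\epsilon}|$.

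For any $\lambda\in\mathbb{R}$, the exponential martingale of the driving point process gives that
\[
\mathcal{E}_{t}(\lambda):=\exp\!\left(\lambda M_{t}^{\epsilon}-\frac{e^{\lambda\epsilon}-1-\lambda\epsilon}{\epsilon}A_{t}^{\epsilon}\right)
\]
is a non-negative supermartingale with $\mathcal{E}_{0}(\lambda)=1$. To handle the path-dependence of $A^{\epsilon}$, I localize with $\tau_{R}:=\inf\{t:Z_{t}^{\epsilon}\ge R\}$: for $s<\tau_{R}$, the Lipschitz property of $\phi$ and local boundedness of $h$ give $\phi(\Pi_{s}^{\epsilon})\le C(R):=\phi(0)+\alpha\|h\|_{L^{\infty}[0,T]}R$, so $A_{t\wedge\tau_{R}}^{\epsilon}\le C(R)T$ deterministically. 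The maximal inequality for non-negative supermartingales applied to $\mathcal{E}_{t\wedge\tau_{R}}(\pm\lambda)$ then yields, for $\lambda>0$,
\[
\mathbb{P}\!\left(\sup_{t\le T}|M_{t\wedge\tau_{R}}^{\epsilon}|\ge y\right)\le 2\exp\!\left(-\lambda y+\frac{e^{\lambda\epsilon}-1-\lambda\epsilon}{\epsilon}\,C(R)T\right).
\]

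Taking $\lambda=\theta a(\epsilon)/\epsilon$ with $\theta>0$, the condition $\lambda\epsilon=\theta a(\epsilon)\to 0$ lets me Taylor-expand $\tfrac{e^{\lambda\epsilon}-1-\lambda\epsilon}{\epsilon}=\tfrac{\theta^{2}a(\epsilon)^{2}}{2\epsilon}(1+o(1))$; with $y=Ka(\epsilon)$ and optimizing over $\theta$ (the optimum being $\theta=K/(C(R)T)$) I obtain
\[
\limsup_{\epsilon\to 0}\frac{\epsilon}{a(\epsilon)^{2}}\log\mathbb{P}\!\left(\sup_{t\le T}|M_{t\wedge\tau_{R}}^{\epsilon}|\ge Ka(\epsilon)\right)\le -\frac{K^{2}}{2C(R)T}.
\]
Since $Z^{\epsilon}$ is non-decreasing, $\{\tau_{R}\le T\}=\{Z_{T}^{\epsilon}\ge R\}$, and Lemma \ref{exptightI} shows $\limsup_{\epsilon\to 0}\epsilon\log\mathbb{P}(Z_{T}^{\epsilon}\ge R)\le -\delta(R)$ with $\delta(R)\to\infty$; since $a(\epsilon)^{-2}\to\infty$, this LDP-speed bound automatically becomes $-\infty$ at the MDP speed $a(\epsilon)^{2}/\epsilon$ for any fixed $R$ large enough. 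Fixing such an $R$ and then sending $K\to\infty$ drives the bound $-K^{2}/(2C(R)T)$ to $-\infty$. The main obstacle is the path-dependence of $A^{\epsilon}$: without the stopping-time localization no exponential moment of $M^{\epsilon}$ can be controlled uniformly, and it is precisely the bounded compensator combined with the small-jump Taylor expansion that makes the estimate match the moderate-deviation scaling.
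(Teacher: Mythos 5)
Your proposal is correct and shares the opening moves with the paper—the decomposition $Z^{\epsilon}=M^{\epsilon}+A^{\epsilon}$, the Lipschitz estimate, and Gronwall's inequality to reduce the claim to an MDP-scale tail bound for $\sup_{t\le T}|M_t^{\epsilon}|$—but you handle the path-dependence of the compensator in a genuinely different way. The paper applies Kallenberg's Lemma 26.19 to produce an exponential supermartingale, then uses Cauchy–Schwarz to isolate $e^{c\langle R^{\epsilon}\rangle_T}$, where $\langle R^{\epsilon}\rangle_T$ is proportional to $\epsilon\int_0^T\phi(\int_0^t h(t-u)\,dZ_u^{\epsilon})\,dt$; this is still random and path-dependent, and they control its exponential moment via the already-established bound \eqref{smalltheta} on $\mathbb{E}[e^{\iota N_T^{\epsilon}}]$. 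You instead stop $M^{\epsilon}$ at $\tau_R=\inf\{t:Z_t^{\epsilon}\ge R\}$, which makes the compensator deterministically bounded by $C(R)T$ before $\tau_R$, so the explicit exponential supermartingale $\mathcal{E}_t(\lambda)$ with the maximal inequality gives a clean Gaussian-type bound at MDP speed after the small-jump Taylor expansion. The price is the extra event $\{\tau_R\le T\}=\{Z_T^{\epsilon}\ge R\}$, which you correctly dispose of by noting that the LDP-speed exponential tightness of Lemma \ref{exptightI} becomes infinitely strong at the slower MDP speed since $a(\epsilon)^{-2}\to\infty$. Both routes are sound; yours buys a somewhat more transparent argument (a standard localization idea, no need for the abstract Kallenberg lemma or the Cauchy–Schwarz splitting), and it makes visible that MDP exponential tightness is free once LDP exponential tightness is known. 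One minor technical point: for the $-\lambda$ direction the exponent in $\mathcal{E}(-\lambda)$ is $\tfrac{e^{-\lambda\epsilon}-1+\lambda\epsilon}{\epsilon}A^{\epsilon}$, which is slightly smaller than $\tfrac{e^{\lambda\epsilon}-1-\lambda\epsilon}{\epsilon}A^{\epsilon}$ for $\lambda>0$, so your stated two-sided bound is valid as an upper bound but the symmetric display should be read with this caveat; the asymptotics after Taylor expansion are identical either way.
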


\begin{lemma}\label{MDPexptightII}
Suppose Assumption \ref{AssumpI}, Assumption \ref{AssumpII}, Assumption \ref{AssumpIII} hold.
For any $\delta>0$,
\begin{equation}
\limsup_{M\rightarrow\infty}\limsup_{\epsilon\rightarrow 0}
\frac{\epsilon}{a(\epsilon)^{2}}\log\mathbb{P}\left(\sup_{0\leq s\leq t\leq T,
|t-s|\leq\frac{1}{M}}|Z_{t}^{\epsilon}-Z_{s}^{\epsilon}-Z_{t}^{0}+Z_{s}^{0}|\geq\delta a(\epsilon)\right)
=-\infty.
\end{equation}
\end{lemma}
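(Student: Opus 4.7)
The plan is to decompose $Z^{\epsilon}-Z^{0}$ as a compensated martingale plus a drift, localize on an event where the intensity is effectively bounded, and handle the two pieces separately at moderate deviation speed $a(\epsilon)^{2}/\epsilon$.

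\emph{Localization.} Fix $K>0$ and define the good event
$E_{K}:=\{Z_{T}^{\epsilon}\leq K\}\cap\{\sup_{t\leq T}|Z_{t}^{\epsilon}-Z_{t}^{0}|\leq K a(\epsilon)\}$. By Lemma \ref{exptightI} applied at speed $\epsilon^{-1}\gg a(\epsilon)^{2}/\epsilon$ and by Lemma \ref{MDPexptightI}, one has $\limsup_{K\to\infty}\limsup_{\epsilon\to 0}\tfrac{\epsilon}{a(\epsilon)^{2}}\log\mathbb{P}(E_{K}^{c})=-\infty$, so for the rest of the argument it suffices to intersect the event in the statement with $E_{K}$ and then send $K\to\infty$ at the end.

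\emph{Martingale--drift decomposition and drift estimate.} Set $X_{u}^{\epsilon}:=\int_{0}^{u}h(u-v)dZ_{v}^{\epsilon}$ and define the compensated martingale $M_{t}^{\epsilon}:=Z_{t}^{\epsilon}-\int_{0}^{t}\phi(X_{u}^{\epsilon})du$, which has jumps of size $\epsilon$ and predictable quadratic variation $\epsilon\int_{0}^{t}\phi(X_{u}^{\epsilon})du$. Subtracting the equation \eqref{Z0Eqn} for $Z^{0}$ yields
\begin{equation*}
(Z_{t}^{\epsilon}-Z_{s}^{\epsilon})-(Z_{t}^{0}-Z_{s}^{0})=(M_{t}^{\epsilon}-M_{s}^{\epsilon})+\int_{s}^{t}\bigl[\phi(X_{u}^{\epsilon})-\phi(X_{u}^{0})\bigr]du.
\end{equation*}
Using that $\phi$ is $\alpha$-Lipschitz (Assumption \ref{AssumpI}) and the pathwise integration-by-parts identity justified by Assumption \ref{AssumpII},
\begin{equation*}
\bigl|\phi(X_{u}^{\epsilon})-\phi(X_{u}^{0})\bigr|\leq\alpha\Bigl|h(0)(Z_{u}^{\epsilon}-Z_{u}^{0})+\int_{0}^{u}h'(u-v)(Z_{v}^{\epsilon}-Z_{v}^{0})dv\Bigr|\leq C_{h}\sup_{v\leq u}|Z_{v}^{\epsilon}-Z_{v}^{0}|,
\end{equation*}
with $C_{h}:=\alpha(|h(0)|+\|h'\|_{L^{1}[0,T]})$. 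On $E_{K}$, the drift contribution to the modulus is therefore at most $C_{h}Ka(\epsilon)/M$, which is strictly less than $\tfrac{\delta}{2}a(\epsilon)$ once $M>2C_{h}K/\delta$.

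\emph{Exponential bound for the martingale modulus.} It remains to show that for each fixed $K>0$,
\begin{equation*}
\limsup_{M\to\infty}\limsup_{\epsilon\to 0}\tfrac{\epsilon}{a(\epsilon)^{2}}\log\mathbb{P}\Bigl(\sup_{|t-s|\leq 1/M}|M_{t}^{\epsilon}-M_{s}^{\epsilon}|\geq\tfrac{\delta}{2}a(\epsilon),\,Z_{T}^{\epsilon}\leq K\Bigr)=-\infty.
\end{equation*}
On $\{Z_{T}^{\epsilon}\leq K\}$ we have $X_{u}^{\epsilon}\leq\|h\|_{L^{\infty}[0,T]}K$, so $\phi(X_{u}^{\epsilon})\leq\bar\phi(K):=\phi(0)+\alpha\|h\|_{L^{\infty}[0,T]}K$. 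Stopping at $\tau_{K}:=\inf\{t:Z_{t}^{\epsilon}>K\}$ produces a martingale that coincides with $M^{\epsilon}$ on the event of interest and has jumps bounded by $\epsilon$ and predictable quadratic variation at most $\epsilon\bar\phi(K)/M$ over any subinterval of length $1/M$. Partitioning $[0,T]$ into $\lceil MT\rceil$ such intervals $I_{j}$ and using $\sup_{|t-s|\leq 1/M}|M_{t}-M_{s}|\leq 2\max_{j}\sup_{t\in I_{j}}|M_{t}-M_{\min I_{j}}|$, Doob's inequality together with the Bennett-type exponential bound for compensated Poisson integrals yields, for each $j$, a bound of the form $\exp\!\bigl(-\tfrac{(\delta a(\epsilon)/4)^{2}}{2\epsilon\bar\phi(K)/M+\epsilon\delta a(\epsilon)/6}\bigr)$. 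Union-bounding over the $\lceil MT\rceil$ intervals (the $\log$ of this factor being negligible at speed $a(\epsilon)^{2}/\epsilon$) and using $\epsilon a(\epsilon)\to 0$ so that the Poisson correction is negligible compared to the Gaussian term, one obtains $\limsup_{\epsilon\to 0}\tfrac{\epsilon}{a(\epsilon)^{2}}\log\mathbb{P}\leq-c\delta^{2}M/\bar\phi(K)$, which tends to $-\infty$ as $M\to\infty$.

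The main obstacle I anticipate is the book-keeping in this last step: one must verify that in the moderate-deviation regime $\epsilon/a(\epsilon)^{2}\to 0$ the Gaussian term in Bennett's inequality genuinely dominates the Poisson correction uniformly in $M$, so that the effective rate scales like $M$ and can be sent to $+\infty$. The localization on $E_{K}$ is essential here because $\phi$ is not assumed bounded globally, and Assumptions \ref{AssumpII} and \ref{AssumpIII} (entering through Lemma \ref{MDPexptightI}) are precisely what allow the drift term to be absorbed as $O(a(\epsilon)/M)$ rather than as something of order $a(\epsilon)$.
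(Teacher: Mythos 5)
Your proposal is correct, and it takes a genuinely different route from the paper's in the martingale step. Both arguments use the same martingale-plus-drift decomposition $Z_t^\epsilon-Z_t^0=M_t^\epsilon+\int_0^t[\phi(X_u^\epsilon)-\phi(X_u^0)]du$ and both ultimately appeal to the same two ingredients (the super-exponential moment bound underlying Lemma \ref{exptightI} and Lemma \ref{MDPexptightI}). The difference is in how the martingale modulus is controlled. You localize first on $E_K=\{Z_T^\epsilon\le K\}\cap\{\sup_t|Z_t^\epsilon-Z_t^0|\le Ka(\epsilon)\}$, stop at $\tau_K$ so the intensity is effectively bounded by $\bar\phi(K)$, and then invoke a Bennett/Bernstein inequality with windowed quadratic variation $\le\epsilon\bar\phi(K)/M$; the localization also kills the drift pathwise ($\le C_hKa(\epsilon)/M<\tfrac{\delta}{2}a(\epsilon)$ for $M$ large). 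The paper works without localizing: it Chernoff-bounds each windowed increment at tilt $\theta=\sqrt M$ via Kallenberg's exponential supermartingale inequality (Lemma 26.19) together with a Cauchy--Schwarz step that isolates the quadratic variation, whose exponential moment is then controlled by the bound \eqref{smalltheta} on $N_T^\epsilon$; the drift is handled probabilistically via Lemma \ref{MDPexptightI}. Your route is more modular: once the intensity is boxed, the martingale step is a textbook concentration bound and the drift is a free deterministic estimate. It is also cleaner where you pass from $\sup_{|t-s|\le 1/M}$ to window maxima, since you use $\sup_{t\in I_j}|M_t-M_{\min I_j}|$ rather than grid increments, which is what the non-monotone process $Z^\epsilon-Z^0$ actually requires (the paper's grid-increment reduction is correct only after splitting $Z^\epsilon$ and $Z^0$ and using monotonicity of each). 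One small bookkeeping point: the reduction across overlapping intervals gives a factor $3$ rather than $2$, but this only renormalizes the constant in your final $-c\delta^2M/\bar\phi(K)$ and does not affect the conclusion.
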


\begin{remark}
(i). Theorem \ref{localMDPThm}, Lemma \ref{MDPexptightI} and Lemma \ref{MDPexptightII}  provide
actually the moderate deviation principle for $Z_{t}^{\epsilon}$ with respect to the uniform topology on $D[0,T]$, see e.g. Lemma A.1 in \cite{DGWU}, or Theorem 4.14 \cite{FK}.

(ii). For stochastic dynamics driven by Brownian motion, the limit of its standardization is an Ornstein-Uhlenbeck process driven by the same Brownian motion, and so, the fluctuations and the moderate deviations can be established by estimating deviation inequality of the standardization with the Ornstein-Uhlenbeck process (see,e.g. \cite{GaoWang}). That approach cannot be applied to stochastic dynamics with jumps in our paper.
\end{remark}

%%%%%%%%%%%%%%%%%%%%%%%%%%%%%%%%%%%%%%%%%%%%%%%%%%%%%
\section{Asymptotics for the mean process for high-dimensional Hawkes processes}\label{SecMeanProcess}

All the previous results that we derived in Theorem \ref{CLTThm}, Theorem \ref{LDPThm} and Theorem \ref{MDPThm}
for the univariate Hawkes process can be transferred to the mean process of a multivariate Hawkes process.
Consider the $N$-dimensional multivariate Hawkes process using the Poisson embeddings representation:
$(Z^{N,1}_t,\dots,Z^{N,N}_t)_{t\geq 0}$:
\begin{equation} 
Z^{N,i}_t=\int_0^t \int_0^\infty {\bf 1}_{\left\{z \leq \phi \left(N^{-1}\sum_{j=1}^N\int_0^{s-}h(s-u)dZ_u^{N,j}\right)\right\}}
\pi^i(ds\,dz),
\end{equation}
and its mean process $\overline{Z}^{N}_t=\frac{1}{N}\sum_{i=1}^N Z^{N,i}_t$, 
which satisfies
\begin{equation}
\overline{Z}^{N}_t=\int_0^t \int_0^\infty {\bf 1}_{\left\{z \leq \phi \left(\int_0^{s-}h(s-u)d\overline{Z}_u^{N}\right)\right\}}
\frac{1}{N}\sum_{i=1}^{N}\pi^{i}(dsdz).
\end{equation}

Theorem \ref{CLTThm}, Theorem \ref{LDPThm} and Theorem \ref{MDPThm}
for the univariate Hawkes process can be transferred to the following
Theorem \ref{mean-proce-CLTThm}, Theorem \ref{mean-proce-LDPThm} and Theorem \ref{mean-proce-MDPThm} respectively for 
the mean process of a multivariate Hawkes process.

\begin{theorem}\label{mean-proce-CLTThm}
Suppose Assumption \ref{AssumpI}, Assumption \ref{AssumpII} and Assumption \ref{AssumpIV} hold. 
Set $X^N_t:=\sqrt{N}(\overline{Z}_{t}^{N}-m_t)$. Then
$X^N_t$ converges in distribution on $D[0,T]$ to a continuous Gaussian process $X_t$ defined in
Theorem \ref{CLTThm}.
\end{theorem}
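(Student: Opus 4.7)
The plan is to reduce this statement directly to Theorem \ref{CLTThm} via a distributional identification already hinted at in the comparison of \eqref{compare1} and \eqref{Hawkes-sde}. The key observation is that if $(\pi^i)_{i=1}^{N}$ are i.i.d.\ Poisson random measures on $[0,\infty)\times[0,\infty)$ with intensity $ds\,dz$, then $\sum_{i=1}^{N}\pi^i$ is a Poisson random measure with intensity $N\,ds\,dz$, so that the driving noise $\frac{1}{N}\sum_{i=1}^{N}\pi^i$ for $\overline{Z}^{N}$ has exactly the same law as $\epsilon\,\pi^{\epsilon^{-1}}$ appearing in \eqref{Hawkes-sde} with the choice $\epsilon=1/N$.

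First I would appeal to pathwise uniqueness of the strong solution of the Poisson-driven SDE (asserted in the paper just before \eqref{Hawkes-sde}, and standard under Assumption \ref{AssumpI} together with local integrability of $h$ via the usual Picard iteration in the Poisson setting as in \cite{Bremaud,Delattre}) to conclude that the processes $\overline{Z}^{N} = (\overline{Z}^{N}_t)_{t\in[0,T]}$ and $Z^{1/N} = (Z^{1/N}_t)_{t\in[0,T]}$ are equal in law on $D[0,T]$. Second, I would identify $m_t$ with $Z^{0}_t$: as noted directly after \eqref{Z0Eqn}, the mean-field limit of $\overline{Z}^{N}$ satisfies exactly the integral equation \eqref{Z0Eqn}, and under the stated assumptions that equation has a unique locally bounded, non-negative solution, hence $m_t = Z^{0}_t$.

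With these two identifications in hand, one has
\begin{equation*}
X^{N} = \sqrt{N}\bigl(\overline{Z}^{N} - m\bigr) \stackrel{d}{=} \sqrt{N}\bigl(Z^{1/N} - Z^{0}\bigr) = X^{1/N}
\end{equation*}
as random elements of $D[0,T]$. Since the hypotheses of Theorem \ref{mean-proce-CLTThm} are identical to those of Theorem \ref{CLTThm}, applying the latter along the sequence $\epsilon_N = 1/N \to 0$ yields that $X^{1/N}$, and therefore $X^{N}$, converges in distribution on $D[0,T]$ to the continuous Gaussian process $X$ of \eqref{Gauss-process}. The only delicate point is the passage from equality in law of the driving noises to equality in law of the entire solution processes on $D[0,T]$; this is not a genuine obstacle but merely a consequence of pathwise uniqueness together with the measurability of the solution map from Poisson noise to path space, and it is the same mechanism that will underlie the analogous transfer of Theorems \ref{LDPThm} and \ref{MDPThm} to the mean-process setting.
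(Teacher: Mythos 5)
Your argument is correct and is essentially the same transfer-by-Poisson-embedding reasoning the paper itself uses: the paper gives no separate proof of Theorem \ref{mean-proce-CLTThm} but instead observes (around \eqref{compare1} and \eqref{Hawkes-sde} and again at the start of Section \ref{SecMeanProcess}) that $\overline{Z}^{N}$ and $Z^{\epsilon}$ with $\epsilon=1/N$ satisfy the same Poisson-driven SDE and hence have the same law, so Theorem \ref{CLTThm} applies directly. Your additional remarks on pathwise uniqueness of the solution map and the identification $m_t=Z^0_t$ spell out exactly the points the paper leaves implicit, so no gap remains.
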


\begin{theorem}\label{mean-proce-LDPThm}
Suppose Assumption \ref{AssumpI}, Assumption \ref{AssumpIII}
and Assumption \ref{AssumpV} hold.
$\mathbb{P}(\overline{Z}_{t}^{N}\in\cdot)$ satisfies a large deviation
principle on $D[0,T]$ equipped with Skorokhod topology
with the speed $N$ and the rate function given in Theorem \ref{LDPThm}.
\end{theorem}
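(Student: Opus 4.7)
The plan is to reduce Theorem \ref{mean-proce-LDPThm} to Theorem \ref{LDPThm} by showing that the mean process $\overline{Z}^{N}_t$ coincides in law with the univariate process $Z^{\epsilon}_t$ studied in earlier sections when we set $\epsilon = 1/N$. The key observation, already flagged in the introduction, is that the sum $\sum_{i=1}^{N}\pi^{i}(ds\,dz)$ of $N$ i.i.d.\ Poisson random measures of intensity $ds\,dz$ is itself a Poisson random measure of intensity $N\,ds\,dz$, and hence equals in law $\pi^{\epsilon^{-1}}(dz\,ds)$ with $\epsilon = 1/N$. Substituting this into the equation
\begin{equation*}
\overline{Z}^{N}_t=\int_0^t \int_0^\infty {\bf 1}_{\left\{z \leq \phi \left(\int_0^{s-}h(s-u)d\overline{Z}_u^{N}\right)\right\}}
\frac{1}{N}\sum_{i=1}^{N}\pi^{i}(ds\,dz)
\end{equation*}
recovers exactly the SDE \eqref{Hawkes-sde} for $Z^{\epsilon}_t$ with $\epsilon = 1/N$.

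The first step is therefore to verify pathwise uniqueness (hence uniqueness in law) for both SDEs under Assumptions \ref{AssumpI}, \ref{AssumpIII}, \ref{AssumpV}. Pathwise uniqueness follows from a standard Gronwall-type argument using the $\alpha$-Lipschitz property of $\phi$ and local integrability of $h$, exactly as is implicitly used in the analysis of \eqref{Hawkes-sde} earlier in the paper. Once uniqueness is established, the identification of Poisson measures above implies that the laws of $\overline{Z}^{N}$ and $Z^{1/N}$ on $D[0,T]$ coincide.

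With the distributional identity $\overline{Z}^{N} \stackrel{d}{=} Z^{1/N}$ in hand, the large deviation principle for $\mathbb{P}(\overline{Z}^{N}\in\cdot)$ with speed $N$ and rate function $I$ follows immediately by applying Theorem \ref{LDPThm} along the sequence $\epsilon_N = 1/N$: for every Borel set $A \subset D[0,T]$,
\begin{equation*}
\mathbb{P}\bigl(\overline{Z}^{N}\in A\bigr) = \mathbb{P}\bigl(Z^{1/N}\in A\bigr),
\end{equation*}
so the upper and lower bounds transfer verbatim, with the speed $\epsilon^{-1}$ becoming $N$ and the rate function $I(\eta)$ of \eqref{IEqn} unchanged. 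The assumptions of Theorem \ref{mean-proce-LDPThm} are exactly those required in Theorem \ref{LDPThm}, so no additional hypothesis is needed.

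The only nontrivial point, and what I would expect to be the main obstacle if one tried to spell everything out, is a clean verification that the strong solution of the Poisson-driven SDE for $\overline{Z}^{N}$ is unique and measurable with respect to the driving Poisson measure, so that the equality in law of Poisson measures genuinely transfers to equality in law of the solutions. Given the Lipschitz assumption on $\phi$ and the local integrability of $h$, this is handled by a Picard iteration argument on a small interval combined with a patching argument, which is standard; an analogous argument has been used in \cite{Delattre,DF} for related multivariate Hawkes SDEs. The same reduction, with $\epsilon = 1/N$, yields Theorem \ref{mean-proce-CLTThm} (and Theorem \ref{mean-proce-MDPThm} in the subsequent section) from Theorem \ref{CLTThm} (respectively Theorem \ref{MDPThm}), so the proof here is genuinely a consequence of the corresponding univariate result rather than a separate analysis.
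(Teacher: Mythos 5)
Your proof is correct and follows the same reduction strategy as the paper: the paper gives no separate proof of Theorem~\ref{mean-proce-LDPThm}, relying instead on the identification $\overline{Z}^{N} \stackrel{d}{=} Z^{1/N}$ noted in the Introduction (comparing \eqref{compare1} with \eqref{Hawkes-sde}) to transfer Theorem~\ref{LDPThm} along $\epsilon_{N}=1/N$. You have usefully made explicit the pathwise-uniqueness step that turns equality in law of the driving Poisson measures into equality in law of the solutions, a point the paper leaves implicit.
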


\begin{theorem}\label{mean-proce-MDPThm}
Suppose Assumption \ref{AssumpI}, Assumption \ref{AssumpII}, Assumption \ref{AssumpIII},
Assumption \ref{AssumpIV} and Assumption \ref{AssumpV} hold.
Let $a(N)$ be a positive sequence such that $a(N),\frac{N}{a(N)^{2}}\rightarrow \infty$
as $N\rightarrow \infty $.
Then, $\mathbb{P}\left(\frac{\sqrt{N}(Z_{t}^{N}-m_t)}{a(N)}\in\cdot\right)$ satisfies a large deviation
principle on $D[0,T]$ equipped with Skorokhod topology
with speed ${a(N)^{2}}$ and the rate function given in Theorem \ref{MDPThm}.
\end{theorem}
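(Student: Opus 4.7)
The plan is to deduce Theorem \ref{mean-proce-MDPThm} from Theorem \ref{MDPThm} by exploiting the distributional identity between the mean process of the $N$-dimensional Hawkes system and the rescaled univariate process $Z^{\epsilon}$ with $\epsilon = 1/N$. Comparing \eqref{compare1} with \eqref{Hawkes-sde}, one sees that $\frac{1}{N}\sum_{i=1}^{N}\pi^{i}$ and $\epsilon\pi^{\epsilon^{-1}}$ are both scaled Poisson random measures on $[0,\infty)\times[0,\infty)$ in which each atom carries mass $1/N$ above an underlying Poisson intensity of $N = \epsilon^{-1}$, hence they are equal in law. Under Assumption \ref{AssumpI}, the Poisson-driven SDE \eqref{Hawkes-sde} is pathwise unique, so uniqueness in law gives
\[
\overline{Z}^{N}_{\cdot} \stackrel{d}{=} Z^{\epsilon}_{\cdot}, \qquad \epsilon = 1/N,
\]
as processes in $D[0,T]$. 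In particular the mean-field limit $m_{t}$ coincides with the deterministic function $Z_{t}^{0}$ characterized by \eqref{Z0Eqn}.

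Given this identification, the task reduces to a change of variables in the moderate-deviation scaling. Set $\epsilon = 1/N$ and
\[
\tilde a(\epsilon) \;:=\; \frac{a(N)}{\sqrt{N}} \;=\; \sqrt{\epsilon}\, a(1/\epsilon).
\]
The hypotheses $a(N)\to\infty$ and $N/a(N)^{2}\to\infty$ translate precisely into $\tilde a(\epsilon)\to 0$ and $\epsilon/\tilde a(\epsilon)^{2} = 1/a(N)^{2} \to 0$, which are exactly the conditions imposed on the scaling sequence in Theorem \ref{MDPThm}. Moreover the speed transforms as $\tilde a(\epsilon)^{2}/\epsilon = a(N)^{2}$, and the rescaled process satisfies
\[
\frac{\sqrt{N}\bigl(\overline{Z}^{N}_{t}-m_{t}\bigr)}{a(N)}
\;\stackrel{d}{=}\;
\frac{Z^{\epsilon}_{t}-Z^{0}_{t}}{\tilde a(\epsilon)}.
\]

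Applying Theorem \ref{MDPThm} to the scaling sequence $\tilde a(\epsilon)$, and noting that Assumptions \ref{AssumpI}--\ref{AssumpV} are identical on both sides of the correspondence, then yields the moderate deviation principle for $\sqrt{N}(\overline{Z}^{N}-m)/a(N)$ on $D[0,T]$ equipped with the Skorokhod topology, with speed $a(N)^{2}$ and rate function $J$ given in \eqref{JEqn}. This is exactly Theorem \ref{mean-proce-MDPThm}. I do not foresee any substantive analytic obstacle: the nontrivial work is already contained in the univariate moderate-deviation theorem, and the only points that require genuine care are (i) the justification of the in-law equality $\overline{Z}^{N}\stackrel{d}{=}Z^{\epsilon}$ via pathwise uniqueness for \eqref{Hawkes-sde}, and (ii) the bookkeeping verification that the two scaling regimes match under $\epsilon=1/N$. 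By the same device, Theorems \ref{mean-proce-CLTThm} and \ref{mean-proce-LDPThm} follow verbatim from Theorems \ref{CLTThm} and \ref{LDPThm} respectively, with the corresponding rescalings $\tilde a(\epsilon)=\sqrt{\epsilon}$ (central limit) and $\tilde a(\epsilon)=1$ (large deviations).
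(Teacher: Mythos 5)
Your proposal is correct and follows exactly the route the paper intends: the paper itself offers no separate proof for Theorem \ref{mean-proce-MDPThm}, simply invoking the identification $\overline{Z}^{N}\stackrel{d}{=}Z^{\epsilon}$ with $\epsilon=1/N$ (via the comparison of \eqref{compare1} with \eqref{Hawkes-sde}) and asserting the univariate results transfer. Your reparametrization $\tilde a(\epsilon)=\sqrt{\epsilon}\,a(1/\epsilon)$ and the verification that the scaling hypotheses, the speed, and the rescaled process all match up under this substitution supply the bookkeeping details the paper leaves implicit.
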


%%%%%%%%%%%%%%%%%%%%%%%%%%%%%%%%%%%%%%%%%%%%%%%%%%%%%%%%%%%%%%%%%%%%

\section{Proofs}\label{ProofsSection}

\subsection{Proofs for Section \ref{CLTSection}}

Before we prove Theorem \ref{CLTThm}, let us
first give the proofs of Lemma \ref{Z1Bound}, Lemma \ref{X2Bound}, and Lemma \ref{Xtight}.

Firstly,  For any $\theta\in\mathbb{R}$,
$e^{\theta N_{T}^{\epsilon}
-\int_{0}^{T}(e^{\theta}-1)\frac{1}{\epsilon}\phi(\int_{0}^{t}\epsilon h(t-s)dN_{s}^{\epsilon})dt}$ 
is a positive local martingale, hence a supermartingale, and thus
for any $\theta>0$, we have
\begin{align}
1&\geq\mathbb{E}\left[e^{\theta N_{T}^{\epsilon}
-\int_{0}^{T}(e^{\theta}-1)\frac{1}{\epsilon}\phi(\int_{0}^{t}\epsilon h(t-s)dN_{s}^{\epsilon})dt}\right]
\\
&\geq\mathbb{E}\left[e^{\theta N_{T}^{\epsilon}
-(e^{\theta}-1)\frac{1}{\epsilon}\phi(0)T
-(e^{\theta}-1)\alpha\int_{0}^{T}\int_{0}^{t}|h(t-s)|dN_{s}^{\epsilon}dt}\right]
\nonumber
\\
&=\mathbb{E}\left[e^{\theta N_{T}^{\epsilon}
-(e^{\theta}-1)\frac{1}{\epsilon}\phi(0)T
-(e^{\theta}-1)\alpha\int_{0}^{T}[\int_{s}^{T}|h(t-s)|dt]dN_{u}^{\epsilon}}\right]
\nonumber
\\
&\geq\mathbb{E}\left[e^{\theta N_{T}^{\epsilon}
-(e^{\theta}-1)\frac{1}{\epsilon}\phi(0)T
-(e^{\theta}-1)\alpha\Vert h\Vert_{L^{1}[0,T]}N_{T}^{\epsilon}}\right].
\nonumber
\end{align}
Since we assumed that $\alpha\Vert h\Vert_{L^{1}[0,T]}<1$, 
for sufficiently small $\theta>0$, we have $\theta-(e^{\theta}-1)\alpha\Vert h\Vert_{L^{1}[0,T]}>0$.
It follows that 
\begin{equation}\label{thetaexp}
\mathbb{E}\left[e^{(\theta-(e^{\theta}-1)\alpha\Vert h\Vert_{L^{1}[0,T]})N_{T}^{\epsilon}}\right]
\leq e^{(e^{\theta}-1)\phi(0)T\frac{1}{\epsilon}}.
\end{equation}
In particular,  for any $k\geq 1$,
\begin{equation}\label{k-order-moment}
\mathbb{E}[(Z_{T}^{\epsilon})^k]<\infty.
\end{equation}

\begin{proof}[Proof of Lemma \ref{Z1Bound}]
Notice that for any $0\leq t\leq T$,
\begin{align}
\mathbb{E}[Z_{t}^{\epsilon}]
&=\mathbb{E}\int_{0}^{t}\phi\left(\int_{0}^{s}h(s-u)dZ_{u}^{\epsilon}\right)ds
\\
&\leq\phi(0)t
+\alpha\mathbb{E}\int_{0}^{T}\int_{0}^{s}|h(s-u)|dZ_{u}^{\epsilon}ds
\nonumber
\\
&\leq\phi(0)t
+\alpha\Vert h\Vert_{L^{\infty}[0,T]}\int_{0}^{T}\mathbb{E}[Z_{s}^{\epsilon}]ds.
\nonumber
\end{align}
The result follows from the Gronwall's inequality.
\end{proof}

%%%%%%%%%%%%%%%%%%%%%%%%%%%%%%%%%%%%%%%%%%%%%%%%%%%%%%%%%%%%%%%%%%%%%%%%%%

\begin{proof}[Proof of Lemma \ref{X2Bound}]
Notice that 
\begin{equation}
X_{t}^{\epsilon}=
\frac{1}{\sqrt{\epsilon}}M_{t}^{\epsilon}
+\frac{1}{\sqrt{\epsilon}}\left[
\int_{0}^{t}\phi\left(\int_{0}^{s}h(s-u)dZ_{u}^{\epsilon}\right)ds
-\int_{0}^{t}\phi\left(\int_{0}^{s}h(s-u)dZ_{u}^{0}\right)ds\right],
\end{equation}
where
\begin{equation}\label{MtEqn}
M_{t}^{\epsilon}:=Z_{t}^{\epsilon}-\int_{0}^{t}\phi\left(\int_{0}^{s}h(s-u)dZ_{u}^{\epsilon}\right)ds
\end{equation}
is a martingale. For any $0\leq t\leq T$,
\begin{align}
|X_{t}^{\epsilon}|
&\leq\frac{1}{\sqrt{\epsilon}}\sup_{0\leq t\leq T}|M_{t}^{\epsilon}|
+\frac{\alpha}{\sqrt{\epsilon}}
\int_{0}^{t}\left|\phi\left(\int_{0}^{s}h(s-u)dZ_{u}^{\epsilon}\right)
-\phi\left(\int_{0}^{s}h(s-u)dZ_{u}^{0}\right)\right|ds
\nonumber
\\
&\leq\frac{1}{\sqrt{\epsilon}}\sup_{0\leq t\leq T}|M_{t}^{\epsilon}|
+\alpha\left[|h(0)|+\int_{0}^{T}|h'(t)|dt\right]
\int_{0}^{t}\sup_{0\leq u\leq s}|X_{u}^{\epsilon}|ds
\nonumber
\end{align}
By Gronwall's inequality,
\begin{equation}\label{X-leq-M-ineq}
\sup_{0\leq t\leq T}
|X_{t}^{\epsilon}|
\leq\frac{1}{\sqrt{\epsilon}}\sup_{0\leq t\leq T}|M_{t}^{\epsilon}|
e^{\alpha\left[|h(0)|+\int_{0}^{T}|h'(t)|dt\right]T}.
\end{equation}
Finally, by Doob's inequality
\begin{align}
\frac{1}{\epsilon}\mathbb{E}\left[\left(\sup_{0\leq t\leq T}|M_{t}^{\epsilon}|\right)^{2}\right]
&\leq\frac{4}{\epsilon}\mathbb{E}\left[(M_{T}^{\epsilon})^{2}\right]
\\
&=4\mathbb{E}\int_{0}^{T}\phi\left(\int_{0}^{s}h(s-u)dZ_{u}^{\epsilon}\right)ds
\nonumber
\\
&=4\mathbb{E}[Z_{T}^{\epsilon}],
\nonumber
\end{align}
where we have proved in Lemma \ref{Z1Bound} that $\mathbb{E}[Z_{T}^{\epsilon}]$ is uniformly bounded in $\epsilon$.
\end{proof}

%%%%%%%%%%%%%%%%%%%%%%%%%%%%%%%%%%%%%%%%%%%%%%%%%%%%%%%%%%%%%

\begin{proof}[Proof of Lemma \ref{Xtight}]
Let us recall that
\begin{equation}
X_{t}^{\epsilon}=
\frac{1}{\sqrt{\epsilon}}M_{t}^{\epsilon}
+\frac{1}{\sqrt{\epsilon}}\left[
\int_{0}^{t}\phi\left(\int_{0}^{s}h(s-u)dZ_{u}^{\epsilon}\right)ds
-\int_{0}^{t}\phi\left(\int_{0}^{s}h(s-u)dZ_{u}^{0}\right)ds\right],
\end{equation}
where $M_{t}^{\epsilon}$ is a martingale
and we can show that for any $\eta>0$, 
\begin{equation}\label{est:martingale}
\lim_{\delta\rightarrow 0}\lim_{\epsilon\rightarrow 0}\mathbb{P}\left(\sup_{0\leq s,t\leq T,|s-t|\leq\delta}\left|\frac{1}{\sqrt{\epsilon}}M_{t}^{\epsilon}-\frac{1}{\sqrt{\epsilon}}M_{s}^{\epsilon}\right|\geq\eta\right)=0.
\end{equation}
To show \eqref{est:martingale}, w.l.o.g., assume $T/\delta\in\mathbb{N}$ and
by using Doob's martingale inequality, Chebychev's inequality, and Burkholder-Davis-Gundy inequality, we have
\begin{align}
&\mathbb{P}\left(\sup_{0\leq s,t\leq T,|s-t|\leq\delta}\left|\frac{1}{\sqrt{\epsilon}}M_{t}^{\epsilon}-\frac{1}{\sqrt{\epsilon}}M_{s}^{\epsilon}\right|\geq\eta\right)
\nonumber
\\
&\leq\sum_{n=1}^{T/\delta}
\mathbb{P}\left(\left|\frac{1}{\sqrt{\epsilon}}M_{n\delta}^{\epsilon}-\frac{1}{\sqrt{\epsilon}}M_{(n-1)\delta}^{\epsilon}\right|
\geq\frac{\eta}{2}\right)
\nonumber
\\
&\leq\frac{C}{\epsilon^{2}\eta^{4}}\sum_{n=1}^{T/\delta}
\mathbb{E}\left[(M_{n\delta}^{\epsilon}-M_{(n-1)\delta}^{\epsilon})^{4}\right]
\nonumber
\\
&=\frac{C'}{\eta^{4}}\sum_{n=1}^{T/\delta}
\mathbb{E}\left[\left(\int_{(n-1)\delta}^{n\delta}\phi\left(\int_{0}^{s-}h(s-u)dZ_{u}^{\epsilon}\right)ds\right)^{2}\right]
\nonumber
\\
&\leq\frac{C'}{\eta^{4}}\delta T\mathbb{E}\left[\left(\sup_{0\leq t\leq T}\phi\left(\int_{0}^{t-}h(t-u)dZ_{u}^{\epsilon}\right)\right)^{2}\right]
\nonumber
\\
&\leq\frac{C'}{\eta^{4}}\delta T\mathbb{E}\left[(\phi(0)+\alpha\Vert h\Vert_{L^{\infty}[0,T]}Z_{T}^{\epsilon})^{2}\right].
\label{gotozero}
\end{align}
Note that for every $0\leq t\leq T$,
\begin{align*}
&\mathbb{E}[(Z_{t}^{\epsilon})^{2}]
\\
&\leq 2\mathbb{E}[(M_{t}^{\epsilon})^{2}]
+2\mathbb{E}\left[\left(\int_{0}^{t}\phi\left(\int_{0}^{s-}h(s-u)dZ_{u}^{\epsilon}\right)ds\right)^{2}\right]
\\
&\leq 2\mathbb{E}[Z_{t}^{\epsilon}]
+2T\mathbb{E}\int_{0}^{t}\phi^{2}\left(\int_{0}^{s-}h(s-u)dZ_{u}^{\epsilon}\right)ds
\\
&\leq 2\mathbb{E}[Z_{t}^{\epsilon}]
+2T\mathbb{E}\int_{0}^{t}(\phi(0)+\alpha\Vert h\Vert_{L^{\infty}[0,T]}Z_{s}^{\epsilon})^{2}ds
\\
&\leq 2\mathbb{E}[Z_{T}^{\epsilon}]
+2T^{2}\phi(0)^{2}+4T^{2}\phi(0)\alpha\Vert h\Vert_{L^{\infty}[0,T]}\mathbb{E}[Z_{T}^{\epsilon}]
+2T(\alpha\Vert h\Vert_{L^{\infty}[0,T]})^{2}\int_{0}^{t}\mathbb{E}[(Z_{s}^{\epsilon})^{2}]ds.
\end{align*}
Recall that we have proved in Lemma \ref{Z1Bound} that $\mathbb{E}[Z_{T}^{\epsilon}]$ is uniformly bounded in $\epsilon$.
By Gronwall's inequality, $\mathbb{E}[(Z_{T}^{\epsilon})^{2}]$ is uniformly bounded in $\epsilon$.
Hence, we conclude that \eqref{est:martingale} follows from \eqref{gotozero} since it goes to zero
as $\delta\rightarrow 0$ uniformly in $\epsilon$.

Moreover, for any $0\leq t\leq t+\delta\leq T$,
\begin{align}
&\bigg|\frac{1}{\sqrt{\epsilon}}
\int_{0}^{t+\delta}\left[\phi\left(\int_{0}^{s}h(s-u)dZ_{u}^{\epsilon}\right)-\phi\left(\int_{0}^{s}h(s-u)dZ_{u}^{0}\right)\right]ds
\\
&\qquad
-\frac{1}{\sqrt{\epsilon}}
\int_{0}^{t}\left[\phi\left(\int_{0}^{s}h(s-u)dZ_{u}^{\epsilon}\right)-\phi\left(\int_{0}^{s}h(s-u)dZ_{u}^{0}\right)\right]ds
\bigg|
\nonumber
\\
&\leq
\frac{1}{\sqrt{\epsilon}}
\int_{t}^{t+\delta}\left|\phi\left(\int_{0}^{s}h(s-u)dZ_{u}^{\epsilon}\right)-\phi\left(\int_{0}^{s}h(s-u)dZ_{u}^{0}\right)\right|ds
\nonumber
\\
&\leq\delta\alpha\left[|h(0)|+\int_{0}^{T}|h'(t)|dt\right]
\sup_{0\leq t\leq T}|X_{t}^{\epsilon}|.
\nonumber
\end{align}
It follows from Lemma \ref{X2Bound} that the sequence
\begin{equation}
\frac{1}{\sqrt{\epsilon}}
\int_{0}^{t}\left[\phi\left(\int_{0}^{s}h(s-u)dZ_{u}^{\epsilon}\right)-\phi\left(\int_{0}^{s}h(s-u)dZ_{u}^{0}\right)\right]ds
\end{equation}
is tight on $C[0,T]$.  Hence,   for any $\eta>0$, 
\begin{equation}
\lim_{\delta\rightarrow 0}\lim_{\epsilon\rightarrow 0}\mathbb{P}\left(\sup_{0\leq s,t\leq T,|s-t|\leq\delta}|X_{t}^{\epsilon}-X_{s}^{\epsilon}|\geq\eta\right)=0,
\end{equation}
which implies that  the sequence $X_{t}^{\epsilon}$ is tight on $D[0,T]$  and the all limits are in $C[0,T]$ by Theorem 15.5 \cite{Billingsley}.
\end{proof}

%%%%%%%%%%%%%%%%%%%%%%%%%%%%%%%%%%%%%%%%%%%%%%%%%%%%%%%%%%%%%%%%%%%%%%%%%%%%%%%%%%%%%%%

We are now finally ready to give the proof of the fluctuations results
in Theorem \ref{CLTThm}.

\begin{proof}[Proof of Theorem \ref{CLTThm}]
We can write 
\begin{align*}
&X_t^\epsilon-\int_{0}^{t}\phi'\left(\int_{0}^{s}h(s-u)dZ_{u}^{0}\right)\left(h(0)X_s^\epsilon+\int_{0}^{s}X_u^\epsilon h'(s-u)du\right)ds\\
&=X_t^\epsilon-\int_{0}^{t}\phi'\left(\int_{0}^{s}h(s-u)dZ_{u}^{0}\right)\int_{0}^{s}h(s-u)dX_{u}^{\epsilon}ds\\
&=\frac{M_t^\epsilon}{\sqrt{\epsilon}} +  \mathcal{E}^{(2)}_{t},
\end{align*}
and
\begin{align*}
&\left(X_t^\epsilon-\int_{0}^{t}\phi'\left(\int_{0}^{s}h(s-u)dZ_{u}^{0}\right)\int_{0}^{s}X_u^{\epsilon}h'(s-u)duds\right)^2-\int_{0}^{t}\phi\left(\int_{0}^{s}h(s-u)dZ_{u}^{\epsilon}\right)ds\\
&=\frac{1}{\epsilon}\left((M_t^\epsilon)^2-\langle M^\epsilon\rangle_t\right)+  \left(\mathcal{E}^{(2)}_{t}\right)^2+2\left(\frac{M_t^\epsilon}{\sqrt{\epsilon}}\right)\mathcal{E}^{(2)}_{t},
\end{align*}
where $M_{t}^{\epsilon}$ is defined in \eqref{MtEqn} and 
\begin{align}
&\mathcal{E}^{(2)}_{t}:=
\frac{1}{\sqrt{\epsilon} }\bigg[
\int_{0}^{t}\phi\left(\int_{0}^{s}h(s-u)dZ_{u}^{\epsilon}\right)ds
-\int_{0}^{t}\phi\left(\int_{0}^{s}h(s-u)dZ_{u}^{0}\right)ds\bigg]
\nonumber
\\
&\qquad
-\frac{1}{\sqrt{\epsilon} }
\int_{0}^{t}\phi'\left(\int_{0}^{s}h(s-u)dZ_{u}^{0}\right)
\left[\int_{0}^{s}h(s-u)dZ_{u}^{\epsilon}
-\int_{0}^{s}h(s-u)dZ_{u}^{0}\right]ds.
\end{align}
Then, by Doob's martingale inequality,  and Burkholder-Davis-Gundy inequality,  there exists a constant $0<C_T<\infty$
\begin{align}
\frac{1}{\epsilon^2}\mathbb{E}\left[\left(\sup_{0\leq t\leq T}|M_{t}^{\epsilon}|\right)^{4}\right]
&\leq C_T \mathbb{E}\int_{0}^{T}\left(\phi\left(\int_{0}^{s}h(s-u)dZ_{u}^{\epsilon}\right)\right)^2ds
\nonumber
\\
&\leq C_T T\mathbb{E}\left[(\phi(0)+\alpha\Vert h\Vert_{L^{\infty}[0,T]}Z_{T}^{\epsilon})^{2}\right],
\nonumber
\end{align}
where we have proved in Lemma \ref{X2Bound} that $\mathbb{E}[(Z_{T}^{\epsilon})^2]$ is uniformly bounded in $\epsilon$.   Thus,  $\frac{1}{\epsilon^2}\mathbb{E}\left[\left(\sup_{0\leq t\leq T}|M_{t}^{\epsilon}|\right)^{4}\right]$ is uniformly bounded in $\epsilon$, and by
\eqref{X-leq-M-ineq},   $\frac{1}{\epsilon^2}\mathbb{E}\left[\left(\sup_{0\leq t\leq T}|X_{t}^{\epsilon}|\right)^{4}\right]$ is also uniformly bounded in $\epsilon$.

By Taylor expansion,
\begin{align}
\sup_{0\leq t\leq T}|\mathcal{E}_{t}^{(2)}|
&\leq
\frac{1}{\sqrt{\epsilon} }
\Vert\phi''\Vert\int_{0}^{T}
\left[\int_{0}^{s}h(s-u)dZ_{u}^{\epsilon}
-\int_{0}^{s}h(s-u)dZ_{u}^{0}\right]^{2}ds
\\
&\leq
\sqrt{\epsilon} T\Vert\phi''\Vert
\alpha^{2}\left[|h(0)|+\int_{0}^{T}|h'(t)|dt\right]^{2}
\sup_{0\leq t\leq T}(X_{t}^{\epsilon})^{2}.
\nonumber
\end{align}

Therefore,   we have that
\begin{equation*}
\frac{M_t^\epsilon}{\sqrt{\epsilon}} , \quad \frac{1}{\epsilon}\left((M_t^\epsilon)^2-\langle M^\epsilon\rangle_t\right), \quad 0<\epsilon\leq\epsilon_0 
\end{equation*}
are uniformly integrable martingales, and
\begin{equation}
\mathbb{E}\left[\sup_{0\leq t\leq T}\left|\mathcal{E}_{t}^{(2)}\right|^2\right]
=O(\epsilon).
\end{equation}
These yield that as $\epsilon\to 0$, in probability, 
\begin{equation*}
\sup_{0\leq t\leq T}\left|\mathcal{E}_{t}^{(2)}\right|\to 0,\quad \sup_{0\leq t\leq T}\left|\left(\frac{M_t^\epsilon}{\sqrt{\epsilon}}\right)\mathcal{E}^{(2)}_{t}\right|\to 0,
\end{equation*}
and there exists a square integrable martingale $\tilde{M}_t,t\in [0,T]$ such that
\begin{equation*}
\sup_{0\leq t\leq T}\left|\frac{M_t^\epsilon}{\sqrt{\epsilon}}-\tilde{M}_t\right|\to 0\quad \mbox{ and }\quad  \sup_{0\leq t\leq T}\left|\frac{1}{\epsilon}\left((M_t^\epsilon)^2-\langle M^\epsilon\rangle_t\right)-\left((\tilde{M}_t)^2-\langle \tilde{M}\rangle_t\right)\right|\to 0.
\end{equation*}
  
In Lemma \ref{Xtight}, we showed that the sequence $X_{t}^{\epsilon}$ is tight on $D[0,T]$.
Let $X$ be a limit point of $X^\epsilon$, and $X$ is continuous in $t$.  
We conclude that,
\begin{equation*}
M_t:=  X_t -   \int_{0}^{t}\phi'\left(\int_{0}^{s}h(s-u)dZ_{u}^{0}\right)\int_{0}^{s}X_u h'(s-u)duds,
\end{equation*}
and
\begin{equation*}
\begin{aligned}
N_t:=&\left(X_t- \int_{0}^{t}\phi'\left(\int_{0}^{s}h(s-u)dZ_{u}^{0}\right)\left(h(0)X_s+\int_{0}^{s}X_uh'(s-u)du\right)ds\right)^2\\
&\qquad\qquad
- \int_{0}^{t}\phi\left(\int_{0}^{s}h(s-u)dZ_{u}^{0}\right)ds,
\end{aligned}
\end{equation*}
are martingales.   
Since $X_{t}$ is continuous in time $t$, by L\'{e}vy's characterization of Brownian motion
and martingale representation theorem, 
see e.g. Chapter IV Theorem 3.6. and Chapter V Proposition 3.8. \cite{RevuzYor},
there exists a standard Brownian motion $W_t$, such that \eqref{Gauss-process} holds.

By Gronwall's inequality,  the stochastic differential equation \eqref{Gauss-process} only has a unique solution which  implies that as $\epsilon\to 0$, the set of limit points of  $\{X^\epsilon\}$ is a singleton.  Thus,  $X^\epsilon$ converges in distribution on $D[0,T]$ to the solution of the equation \eqref{Gauss-process}.

Finally, let us show that the limit $X_{t}$ is a Gaussian process.
Set $X_{t}^{(0)}:=0$ and
\begin{equation}
X_{t}^{(1)}:=\int_{0}^{t}\sqrt{\phi\left(\int_{0}^{s}h(s-u)dZ_{u}^{0}\right)}dW_{s},
\end{equation} 
and for every $n\geq 1$,
\begin{equation}
\begin{aligned}
X_{t}^{(n+1)}:=&\int_{0}^{t}\phi'\left(\int_{0}^{s}h(s-u)dZ_{u}^{0}\right)\left(h(0)X_{s}^{(n)}+\int_{0}^{s}X_{u}^{(n)}h'(s-u)du\right)ds \\
&+ \int_{0}^{t}\sqrt{ \phi\left(\int_{0}^{s}h(s-u)dZ_{u}^{0}\right)}dW_{s},
\end{aligned}
\end{equation}
Then $\{X_{t}^{(n)},t\in[0,T]\}_{n\geq 1}$ is a sequence of Gaussian processes.
Moreover, we can compute that
\begin{align*}
&X_{t}^{(n+1)}-X_{t}^{(n)}
\\
&=\int_{0}^{t}\phi'\left(\int_{0}^{s}h(s-u)dZ_{u}^{0}\right)
\left[h(0)(X_{s}^{(n)}-X_{s}^{(n-1)})+\int_{0}^{s}(X_{u}^{(n)}-X_{u}^{(n-1)})h'(s-u)du\right]ds,
\end{align*}
where we used the integration by parts and $X_{0}^{(n)}=X_{0}^{(n-1)}=0$.
Set $\Phi^{(n)}(t):=\sup_{0\leq s\leq t}|X_{s}^{(n)}-X_{s}^{(n-1)}|$. 
Then for every $t\in[0,T]$,
\begin{align*}
\Phi^{(n+1)}(t)&\leq\alpha\int_{0}^{t}\left(|h(0)|+\int_{0}^{s}|h'(s-u)|du\right)\Phi^{(n)}(s)ds
\\
&\leq\alpha(|h(0)|+\Vert h'\Vert_{L^{1}[0,T]})\int_{0}^{t}\Phi^{(n)}(s)ds,
\end{align*}
which implies that
\begin{equation}
\Phi^{(n+1)}(T)\leq\frac{(\alpha(|h(0)|+\Vert h'\Vert_{L^{1}[0,T]})T)^{n}}{n!}
\sup_{0\leq t\leq T}\Phi^{(1)}(t),
\end{equation}
which yields that
\begin{equation}
\mathbb{E}\left[\sum_{n=1}^{\infty}(\Phi^{(n)}(T))\right]<\infty.
\end{equation}
Thus, almost surely, $\sum_{n=1}^{\infty}\Phi^{(n)}(T)<\infty$.
Thus, by Proposition 6.1 (Chapter 0) in \cite{RevuzYor}, 
$\tilde{X}_{t}=\sum_{n=0}^{\infty}(X_{t}^{(n+1)}-X_{t}^{(n)})$ is a continuous Gaussian process
such that 
$$
\sup_{0\leq t\leq T}|X_{t}^{(n)}-\tilde{X}_{t}|\rightarrow 0 ~~\mbox{ almost surely}.  
$$
Therefore,
\begin{equation}
\begin{aligned}
\tilde{X}_t=&\int_{0}^{t}\phi'\left(\int_{0}^{s}h(s-u)dZ_{u}^{0}\right)\left(h(0)\tilde{X}_s+\int_{0}^{s}\tilde{X}_{u}h'(s-u)du\right)ds\\
& + \int_{0}^{t}\sqrt{ \phi\left(\int_{0}^{s}h(s-u)dZ_{u}^{0}\right)}dW_{s}.
\end{aligned}
\end{equation}
By the uniqueness of the solution of the equation  \eqref{Gauss-process}, we have $\tilde{X}=X$. Therefore, $\{X_t,t\in[0,T]\}$ is a Gaussian process.
\end{proof}

%%%%%%%%%%%%%%%%%%%%%%%%%%%%%%%%%%%%%%%%%%%%%%%%%%%%%%%%%%%%%%%%%%%%

\subsection{Proofs for Section \ref{LDPSection}}

\begin{proof}[Proof of Theorem \ref{LDPThm}]
Theorem \ref{LDPThm} follows from  
the local large deviation principle in Theorem \ref{localLDPThm}
and the super-exponential estimates in Lemma \ref{exptightI} and Lemma \ref{exptightII}.
\end{proof}

\begin{proof}[Proof of Theorem \ref{localLDPThm}]
Set
$$
\mathcal M_0[0,T]=\left\{\eta\in D[0,T]; ~\eta(0)=0, ~\eta(t) \mbox{  is  non-decreasing in } t\in [0,T]\right\}.
$$
Then $\mathcal M_0[0,T]$ is a closed subset in $D[0,T]$ 
and $\mathbb{P}(Z^\epsilon \in \mathcal M_0[0,T] \mbox{ for all } \epsilon \in (0,1])=1$, 
Thus,  for any $\eta\not\in \mathcal M_0[0,T]$,  
$$
\lim_{\delta\rightarrow 0}\lim_{\epsilon\rightarrow 0}
\epsilon\log
\mathbb{P}\left(\sup_{0\leq t\leq T}|Z_{t}^{\epsilon}-\eta(t)|\leq\delta\right)
=-\infty.
$$
Next, we assume that  $\eta\in \mathcal M_0[0,T]$.  

Let $\tilde{\mathbb{P}}$ be the probability measure
under which $N^{\epsilon}$ is a standard Poisson process with
intensity $\frac{1}{\epsilon}$. 
Since $\phi$ is $\alpha$-Lipschitz, we have
\begin{equation}
\frac{1}{\epsilon}\phi\left(\int_{0}^{t-}\epsilon h(t-s)dN_{s}^{\epsilon}\right)
\leq\frac{1}{\epsilon}\phi(0)
+\frac{\alpha}{\epsilon}\int_{0}^{t-}\epsilon|h(t-s)|dN_{s}^{\epsilon}
\leq\frac{1}{\epsilon}\phi(0)
+\alpha\Vert h\Vert_{L^{\infty}[0,T]}N_{t-}.
\end{equation}
That is, the intensity has at most the linear growth in $N_{t-}$.
Moreover, under our assumption, we have $\inf_{x\geq 0}\phi(x)>0$. Thus, $\mathbb{P}$ and $\tilde{\mathbb{P}}$
are equivalent, and the Radon-Nikodym is given by, see e.g. \cite{SH},
\begin{equation}
\frac{d\mathbb{P}}{d\tilde{\mathbb{P}}}\bigg|_{\mathcal{F}_{T}}
=e^{\int_{0}^{T}\log\left(\frac{\frac{1}{\epsilon}\phi\left(\int_{0}^{t-}\epsilon h(t-s)dN_{s}^{\epsilon}\right)}{\frac{1}{\epsilon}}\right)dN_{t}^{\epsilon}
-\int_{0}^{T}\left[\frac{1}{\epsilon}\phi\left(\int_{0}^{t}\epsilon h(t-s)dN_{s}^{\epsilon}\right)-\frac{1}{\epsilon}\right]ds}.
\end{equation}
By changing of the probability measure $\mathbb{P}$
to $\tilde{\mathbb{P}}$, 
\begin{align}
&\mathbb{P}\left(\sup_{0\leq t\leq T}|Z_{t}^{\epsilon}-\eta(t)|\leq\delta\right)
\\
&=\tilde{\mathbb{E}}\left[e^{\int_{0}^{T}\log\left(\frac{\frac{1}{\epsilon}\phi\left(\int_{0}^{t-}\epsilon h(t-s)dN_{s}^{\epsilon}\right)}{\frac{1}{\epsilon}}\right)dN_{t}^{\epsilon}
-\int_{0}^{T}\left[\frac{1}{\epsilon}\phi\left(\int_{0}^{t}\epsilon h(t-s)dN_{s}^{\epsilon}\right)-\frac{1}{\epsilon}\right]ds}
1_{\sup_{0\leq t\leq T}|Z_{t}^{\epsilon}-\eta(t)|\leq\delta}\right]
\nonumber
\\
&=\tilde{\mathbb{E}}\left[e^{\frac{1}{\epsilon}\int_{0}^{T}\log\left(\phi\left(\int_{0}^{t-} h(t-s)dZ_{s}^{\epsilon}\right)\right)dZ_{t}^{\epsilon}
-\frac{1}{\epsilon}
\int_{0}^{T}\left[\phi\left(\int_{0}^{t}h(t-s)dZ_{s}^{\epsilon}\right)-1\right]ds}
1_{\sup_{0\leq t\leq T}|Z_{t}^{\epsilon}-\eta(t)|\leq\delta}\right].
\nonumber
\end{align}
For any $\{Z^{\epsilon}_{t}, 0\leq t\leq T\}$ with $\sup_{0\leq t\leq T}|Z_{t}^{\epsilon}-\eta(t)|\leq\delta$, we have
\begin{align}
&\left|\int_{0}^{T}\log\left(\phi\left(\int_{0}^{t-} h(t-s)dZ_{s}^{\epsilon}\right)\right)dZ_{t}^{\epsilon}
-\int_{0}^{T}\log\left(\phi\left(\int_{0}^{t-} h(t-s)d\eta(s)\right)\right)d\eta(t)\right|
\\
&\leq\left|\int_{0}^{T}\log\left(\phi\left(\int_{0}^{t-} h(t-s)d\eta(s)\right)\right)dZ_{t}^{\epsilon}
-\int_{0}^{T}\log\left(\phi\left(\int_{0}^{t-} h(t-s)d\eta(s)\right)\right)d\eta(t)\right|
\nonumber
\\
&\qquad
+\left|\int_{0}^{T}\log\left(\phi\left(\int_{0}^{t-} h(t-s)dZ_{s}^{\epsilon}\right)\right)dZ_{t}^{\epsilon}
-\int_{0}^{T}\log\left(\phi\left(\int_{0}^{t-} h(t-s)d\eta(s)\right)\right)dZ_{t}^{\epsilon}\right|.
\nonumber
\end{align}

Set $\nu_{\eta}(t)=\phi\left(\int_{0}^{t-} h(t-s)d\eta(s)\right)$. 
Then $\nu_\eta$  is a finite variation function on $[0,T]$. The total variation of $\nu_{\eta}(t)$ on $[0,T]$ denotes by 
$\int_0^T|d\nu_{\eta}(t)|$. 
Then
$$
|\log \nu_{\eta}(T)|\leq |\log\phi(0)|
+\frac{\alpha}{\inf_{x\geq 0}\phi(x)}\Vert h\Vert_{L^{\infty}[0,T]}\eta(T),
$$
and
$$
\int_0^T|d\nu_{\eta}(T)|\leq \alpha (|h(0)|+\Vert h'\Vert_{L^{\infty}[0,T]})(\eta(T)+T\eta(T)).
$$

It follows from integration by parts that
\begin{align}
&\left|\int_{0}^{T}\log\left(\phi\left(\int_{0}^{t-} h(t-s)d\eta(s)\right)\right)dZ_{t}^{\epsilon}
-\int_{0}^{T}\log\left(\phi\left(\int_{0}^{t-} h(t-s)d\eta(s)\right)\right)d\eta(t)\right|
\nonumber\\
&\leq
\left|\int_{0}^{T}\frac{Z_{t}^{\epsilon}-\eta(t)}
{\nu_\eta(t)}d\nu_\eta(t)
 \right|
+\left|\log\left(\nu_\eta(T)\right)Z_{T}^{\epsilon}
-\log\left(\nu_\eta(T)\right)\eta(T)\right|
\nonumber
\\
&\leq
\sup_{0\leq t\leq T}|Z_{t}^{\epsilon}-\eta(t)|\frac{1}{\inf_{x\geq 0}\phi(x)}
\int_{0}^{T}\left|d\nu_\eta(t)\right|
+\sup_{0\leq t\leq T}|Z_{t}^{\epsilon}-\eta(t)|\left|\log\left(\nu_\eta(T)\right)\right|
\nonumber
\\
&\leq
\sup_{0\leq t\leq T}|Z_{t}^{\epsilon}-\eta(t)|\frac{\alpha}{\inf_{x\geq 0}\phi(x)}
(|h(0)|+\Vert h'\Vert_{L^{\infty}[0,T]})(\eta(T)+T\eta(T))
\nonumber
\\
&\qquad
+\sup_{0\leq t\leq T}|Z_{t}^{\epsilon}-\eta(t)|
\left[|\log\phi(0)|
+\frac{\alpha}{\inf_{x\geq 0}\phi(x)}\Vert h\Vert_{L^{\infty}[0,T]}\eta(T)\right].
\nonumber
\end{align}
On the other hand, we can estimate that
\begin{align}
&\left|\int_{0}^{T}\log\left(\phi\left(\int_{0}^{t-} h(t-s)dZ_{s}^{\epsilon}\right)\right)dZ_{t}^{\epsilon}
-\int_{0}^{T}\log\left(\phi\left(\int_{0}^{t} h(t-s)d\eta(s)\right)\right)dZ_{t}^{\epsilon}\right|
\\
&\leq\sup_{0\leq t\leq T}\left|\log\left(\phi\left(\int_{0}^{t} h(t-s)dZ_{s}^{\epsilon}\right)\right)
-\log\left(\phi\left(\int_{0}^{t} h(t-s)d\eta(s)\right)\right)\right|
Z_{T}^{\epsilon}
\nonumber
\\
&\leq\frac{\alpha}{\inf_{x\geq 0}\phi(x)}\sup_{0\leq t\leq T}\left|\int_{0}^{t} h(t-s)dZ_{s}^{\epsilon}
-\int_{0}^{t}h(t-s)d\eta(s)\right|
[\eta(T)+\delta]
\nonumber
\\
&\leq\frac{\alpha}{\inf_{x\geq 0}\phi(x)}\left[|h(0)|+\int_{0}^{T}|h'(t)|dt\right]
\sup_{0\leq t\leq T}|Z_{t}^{\epsilon}-\eta(t)|
[\eta(T)+\delta].
\nonumber
\end{align}
Finally, we can estimate that
\begin{align}
&\left|\int_{0}^{T}\left[\phi\left(\int_{0}^{t}h(t-s)dZ_{s}^{\epsilon}\right)-1\right]ds
-\int_{0}^{T}\left[\phi\left(\int_{0}^{t}h(t-s)d\eta(s)\right)-1\right]ds\right|
\\
&\leq
T\alpha\left[|h(0)|+\int_{0}^{T}|h'(t)|dt\right]
\sup_{0\leq t\leq T}|Z_{t}^{\epsilon}-\eta(t)|.
\nonumber
\end{align}
Since $N^{\epsilon}$ is a standard Poisson process with
intensity $\frac{1}{\epsilon}$ under the probability measure $\tilde{\mathbb{P}}$,
it is well known that, see, e.g. \cite{LS,Mog}, $\tilde{\mathbb{P}}(Z_{t}^{\epsilon}\in\cdot)$ satisfies
a large deviation principle on $D[0,T]$ with the rate function
\begin{equation}
I_{Pos}(\eta)=
\begin{cases}
\int_{0}^{T}\ell(\eta'(t);1)dt  
&\text{if $\eta\in\mathcal{AC}_{0}^{+}[0,T]$}
\\
+\infty &\text{otherwise}
\end{cases}, 
\end{equation}
where $\ell(\cdot)$ is defined in \eqref{ellEqn}.

Hence, we conclude that
\begin{align}
&\lim_{\delta\rightarrow 0}\lim_{\epsilon\rightarrow 0}\epsilon
\log\tilde{\mathbb{E}}\left[e^{\int_{0}^{T}\log\left(\frac{\frac{1}{\epsilon}\phi\left(\int_{0}^{t-}\epsilon h(t-s)dN_{s}^{\epsilon}\right)}{\frac{1}{\epsilon}}\right)dN_{t}^{\epsilon}
-\int_{0}^{T}\left[\frac{1}{\epsilon}\phi\left(\int_{0}^{t}\epsilon h(t-s)dN_{s}^{\epsilon}\right)-\frac{1}{\epsilon}\right]ds}
1_{\sup_{0\leq t\leq T}|Z_{t}^{\epsilon}-\eta(t)|\leq\delta}\right]
\\
&=\int_{0}^{T}\log\left(\phi\left(\int_{0}^{t}h(t-s)d\eta(s)\right)\right)d\eta(t)
-\int_{0}^{T}\left[\phi\left(\int_{0}^{t}h(t-s)d\eta(s)\right)-1\right]ds
\nonumber
\\
&\qquad
+
\lim_{\epsilon\rightarrow 0}\epsilon
\log\tilde{\mathbb{P}}\left(\sup_{0\leq t\leq T}|Z_{t}^{\epsilon}-\eta(t)|\leq\delta\right)
\nonumber
\\
&=\int_{0}^{T}\log\left(\phi\left(\int_{0}^{t}h(t-s)d\eta(s)\right)\right)d\eta(t)
-\int_{0}^{T}\left[\phi\left(\int_{0}^{t}h(t-s)d\eta(s)\right)-1\right]ds
-I_{Pos}(\eta)\nonumber
\\
&=-I(\eta).
\nonumber
\end{align}
\end{proof}
%%%%%%%%%%%%%%%%%%%%%%%%%%%%%%%%%%%%%%%%%%%%%%%%%%%%%%%%%%%%%%%%%%%%%%%

\begin{proof}[Proof of Lemma \ref{exptightI}]

By \eqref{thetaexp}, and Chebychev's inequality, for sufficiently small $\theta>0$,
\begin{align}
\mathbb{P}(Z_{T}^{\epsilon}\geq K)
&\leq\mathbb{E}\left[e^{(\theta-(e^{\theta}-1)\alpha\Vert h\Vert_{L^{1}[0,T]})N_{T}^{\epsilon}}\right]
e^{-(\theta-(e^{\theta}-1)\alpha\Vert h\Vert_{L^{1}[0,T]})\frac{K}{\epsilon}}
\\
&\leq
e^{(e^{\theta}-1)\phi(0)T\frac{1}{\epsilon}}
e^{-(\theta-(e^{\theta}-1)\alpha\Vert h\Vert_{L^{1}[0,T]})\frac{K}{\epsilon}},
\nonumber
\end{align}
which yields the desired result.
\end{proof}

%%%%%%%%%%%%%%%%%%%%%%%%%%%%%%%%%%%%%%%%%%%%%%%%%%%%%%%%%%%%%%%%%%%%%

\begin{proof}[Proof of Lemma \ref{exptightII}]
Without loss of generality, let us assume that $MT\in\mathbb{N}$.
For any $\delta>0$,
\begin{align} 
\mathbb{P}\left(\sup_{0\leq s\leq t\leq T,
|t-s|\leq\frac{1}{M}}|Z_{t}^{\epsilon}-Z_{s}^{\epsilon}|\geq\delta\right)
&\leq
\mathbb{P}\left(\exists j, 1\leq j\leq MT:
Z_{\frac{j}{M}}^{\epsilon}-Z_{\frac{j-1}{M}}^{\epsilon}\geq\frac{\delta}{2}\right)
\\
&\leq
\sum_{j=1}^{MT}\mathbb{P}\left(
N_{\frac{j}{M}}^{\epsilon}-N_{\frac{j-1}{M}}^{\epsilon}\geq\frac{\delta}{2\epsilon}\right).
\nonumber
\end{align}
For any $\theta>0$,
\begin{align}
1&=\mathbb{E}\left[e^{\theta(N^{\epsilon}_{j/M}-N^{\epsilon}_{(j-1)/M})
-(e^{\theta}-1)\int_{(j-1)/M}^{j/M}\frac{1}{\epsilon}\phi(\int_{0}^{t}\epsilon h(t-s)dN_{s}^{\epsilon})dt}\right]
\\
&\geq e^{-(e^{\theta}-1)\phi(0)\frac{1}{\epsilon}\frac{1}{M}}
\mathbb{E}\left[e^{\theta(N^{\epsilon}_{j/M}-N^{\epsilon}_{(j-1)/M})
-(e^{\theta}-1)\alpha\int_{(j-1)/M}^{j/M}\int_{0}^{t}h(t-s)dN_{s}^{\epsilon}dt}\right]
\nonumber
\\
&\geq e^{-(e^{\theta}-1)\phi(0)\frac{1}{\epsilon}\frac{1}{M}}
\mathbb{E}\left[e^{\theta(N^{\epsilon}_{j/M}-N^{\epsilon}_{(j-1)/M})
-(e^{\theta}-1)\alpha\Vert h\Vert_{L^{\infty}[0,T]}N_{T}^{\epsilon}\frac{1}{M}}\right].
\nonumber
\end{align}
Therefore, by Cauchy-Schwarz inequality,
\begin{align}
&\mathbb{E}\left[e^{\frac{\theta}{2}(N^{\epsilon}_{j/M}-N^{\epsilon}_{(j-1)/M})}\right]
\\
&=\mathbb{E}\left[e^{\frac{\theta}{2}(N^{\epsilon}_{j/M}-N^{\epsilon}_{(j-1)/M})-\frac{1}{2}(e^{\theta}-1)\alpha\Vert h\Vert_{L^{\infty}[0,T]}N_{T}^{\epsilon}\frac{1}{M}}
e^{\frac{1}{2}(e^{\theta}-1)\alpha\Vert h\Vert_{L^{\infty}[0,T]}N_{T}^{\epsilon}\frac{1}{M}}\right]
\nonumber
\\
&\leq\left(\mathbb{E}\left[e^{\theta(N^{\epsilon}_{j/M}-N^{\epsilon}_{(j-1)/M})-(e^{\theta}-1)\alpha\Vert h\Vert_{L^{\infty}[0,T]}N_{T}^{\epsilon}\frac{1}{M}}\right]\right)^{\frac{1}{2}}
\left(\mathbb{E}\left[e^{(e^{\theta}-1)\alpha\Vert h\Vert_{L^{\infty}[0,T]}N_{T}^{\epsilon}\frac{1}{M}}\right]\right)^{\frac{1}{2}}
\nonumber
\\
&\leq e^{\frac{1}{2}(e^{\theta}-1)\phi(0)\frac{1}{\epsilon}\frac{1}{M}}
\left(\mathbb{E}\left[e^{(e^{\theta}-1)\alpha\Vert h\Vert_{L^{\infty}[0,T]}N_{T}^{\epsilon}\frac{1}{M}}\right]\right)^{\frac{1}{2}},
\nonumber
\end{align}
which is uniform in $1\leq j\leq TM$. By Chebychev's inequality,
\begin{align} 
&\mathbb{P}\left(\sup_{0\leq s\leq t\leq T,
|t-s|\leq\frac{1}{M}}|Z_{t}^{\epsilon}-Z_{s}^{\epsilon}|\geq\delta\right)
\\
&\leq
\sum_{j=1}^{MT}\mathbb{P}\left(
N_{\frac{j}{M}}^{\epsilon}-N_{\frac{j-1}{M}}^{\epsilon}\geq\frac{\delta}{2\epsilon}\right)
\nonumber
\\
&\leq
\sum_{j=1}^{MT}
\mathbb{E}\left[e^{\frac{\theta}{2}(N^{\epsilon}_{j/M}-N^{\epsilon}_{(j-1)/M})}\right]
e^{-\frac{\theta}{2}\frac{\delta}{2\epsilon}}
\nonumber
\\
&\leq
MT e^{\frac{1}{2}(e^{\theta}-1)\phi(0)\frac{1}{\epsilon}\frac{1}{M}}
\left(\mathbb{E}\left[e^{(e^{\theta}-1)\alpha\Vert h\Vert_{L^{\infty}[0,T]}\frac{1}{M}N_{T}^{\epsilon}}\right]\right)^{\frac{1}{2}}
e^{-\frac{\theta}{2}\frac{\delta}{2\epsilon}}.
\nonumber
\end{align}
It follows from \eqref{thetaexp} that
for any sufficiently small $\iota>0$,
\begin{equation}\label{smalltheta}
\mathbb{E}[e^{\iota N_{T}^{\epsilon}}]
\leq e^{C(\iota)\frac{1}{\epsilon}},
\end{equation}
where $C(\iota)$ is a positive constant that depends only on $\iota$, $\alpha$, $\Vert h\Vert_{L^{1}[0,T]}$, 
$\phi(0)$ and $T$.

Let $\gamma$ be a sufficiently small fixed constant, independent of all the other parameters.
We define
$\theta:=\log(1+\gamma M)$,
and thus
\begin{equation}
(e^{\theta}-1)\alpha\Vert h\Vert_{L^{\infty}[0,T]}\frac{1}{M}
=\gamma
\alpha\Vert h\Vert_{L^{\infty}[0,T]}
\end{equation}
is sufficiently small since $\gamma$ is sufficiently small and we can apply \eqref{smalltheta} and the Chebychev's inequality
and get
\begin{align}
&\limsup_{\epsilon\rightarrow 0}\epsilon\log\mathbb{P}\left(\sup_{0\leq s\leq t\leq T,
|t-s|\leq\frac{1}{M}}|Z_{t}^{\epsilon}-Z_{s}^{\epsilon}|\geq\delta\right)
\\
&\leq
\frac{1}{2}(e^{\theta}-1)\phi(0)\frac{1}{M}
+\frac{1}{2}C\left((e^{\theta}-1)\alpha\Vert h\Vert_{L^{\infty}[0,T]}\frac{1}{M}\right)
-\frac{1}{4}\theta\delta.
\nonumber
\end{align}
Since $\theta=\log(1+\gamma M)$, we get
\begin{align}
&\limsup_{\epsilon\rightarrow 0}\epsilon\log\mathbb{P}\left(\sup_{0\leq s\leq t\leq T,
|t-s|\leq\frac{1}{M}}|Z_{t}^{\epsilon}-Z_{s}^{\epsilon}|\geq\delta\right)
\\
&\leq
\frac{1}{2}\phi(0)\gamma
+\frac{1}{2}C\left(\gamma\alpha\Vert h\Vert_{L^{\infty}[0,T]}\right)
-\frac{1}{4}\log(1+\gamma M)\delta,
\nonumber
\end{align}
which yields the desired result by letting $M\rightarrow\infty$.
\end{proof}

%%%%%%%%%%%%%%%%%%%%%%%%%%%%%%%%%%%%%%%%%%%%%%%%%%%%%%%%%%%%%%%%%%%

\subsection{Proofs for Section \ref{MDPSection}}

\begin{proof}[Proof of Theorem \ref{MDPThm}]
Theorem \ref{MDPThm} follows from  
the local large deviation principle in Theorem \ref{localMDPThm}
and the super-exponential estimates in Lemma \ref{MDPexptightI} and Lemma \ref{MDPexptightII}.
\end{proof}

\begin{proof}[Proof of Theorem \ref{localMDPThm}]
Set
$$
\mathcal V_0[0,T]=\left\{\eta\in D[0,T]; ~\eta(0)=0, ~\eta(t) \mbox{  has  finite variation  in } t\in [0,T]\right\}.
$$
Then $\mathcal V_0[0,T]$ is a closed subset in $D[0,T]$ 
and $\mathbb{P}(Z^\epsilon-Z^0 \in \mathcal V_0[0,T] \mbox{ for all } \epsilon\in(0,1])=1$, 
Thus,  for any $\eta\not\in \mathcal V_0[0,T]$,  
$$
\lim_{\delta \rightarrow 0}\lim_{\epsilon\rightarrow 0} \frac{\epsilon}{a^2(\epsilon)}\log
\mathbb{P}\left(\sup_{0\leq t\leq T}\left|\frac{Z_{t}^{\epsilon}-Z_t^0}{a(\epsilon)}-\eta(t)\right|\leq\delta\right)
=-\infty.
$$
Next, we assume that  $\eta\in \mathcal V_0[0,T]$.  

Let $\tilde{\mathbb{P}}$ be the probability measure
under which $N^{\epsilon}$ is an inhomogeneous Poisson process with
intensity $\frac{1}{\epsilon}\phi\left(\int_{0}^{t}  h(t-s)dZ_{s}^0\right)$.
Denote by
$$
  {\eta}^\epsilon(t):= Z_t^0+a(\epsilon)\eta(t),~~~~~\delta(\epsilon):=\delta a(\epsilon).
$$

By changing the probability measure $\mathbb{P}$
to $\tilde{\mathbb{P}}$ (see the discussions about the change of measure
and the Radon-Nikodym derivative in the proof of Theorem \ref{localLDPThm} and \cite{SH}),
\begin{align*}
&\mathbb{P}\left(\sup_{0\leq t\leq T}\left|\frac{Z_{t}^{\epsilon}-Z_t^0}{a(\epsilon)}-\eta(t)\right|\leq\delta\right)
\\
&=\tilde{\mathbb{E}}\Bigg[e^{ \frac{1}{\epsilon} \int_{0}^{T}\log\left( \frac{\phi\left(\int_{0}^{t-}  h(t-s)dZ_{s}^{\epsilon}\right)}{\phi\left(\int_{0}^{t} h(t-s)dZ_s^0\right)}\right)d Z_{t}^{\epsilon}
 -\frac{1}{\epsilon} \int_{0}^{T} \left(\phi\left(\int_{0}^{t}\  h(t-s)dZ_{s}^{\epsilon}\right)-\phi\left(\int_{0}^{t}  h(t-s)dZ_s^0\right)\right) ds}
 \\
 &\qquad\qquad\qquad\cdot
 1_{\sup_{0\leq t\leq T}|Z_t^{\epsilon}-\eta^{\epsilon}(t)|\leq\delta(\epsilon)}\Bigg].
\end{align*}

Replacing  $\eta $ in  the proof of Theorem \ref{localLDPThm} by $\eta^\epsilon$, we have firstly,
\begin{align}\label{estimate1}
&\left|\int_{0}^{T}\log\left(\phi\left(\int_{0}^{t} h(t-s)d\eta^\epsilon(s)\right)\right)dZ_{t}^{\epsilon}
-\int_{0}^{T}\log\left(\phi\left(\int_{0}^{t} h(t-s)d\eta^\epsilon(s)\right)\right)d\eta^\epsilon(t)\right|
\\
&\leq
\sup_{0\leq t\leq T}|Z_{t}^{\epsilon}-\eta^\epsilon(t)|\frac{\alpha}{\inf_{x\geq 0}\phi(x)}
(|h(0)|+\Vert h'\Vert_{L^{\infty}[0,T]})(\eta^\epsilon(T)+T\eta^\epsilon(T))
\nonumber
\\
&\qquad
+\sup_{0\leq t\leq T}|Z_{t}^{\epsilon}-\eta^\epsilon(t)|
\left[|\log\phi(0)|
+\frac{\alpha}{\inf_{x\geq 0}\phi(x)}\Vert h\Vert_{L^{\infty}[0,T]}\eta^\epsilon(T)\right],
\nonumber
\end{align}
and secondly,
\begin{align}\label{estimate2}
&\left|\int_{0}^{T}\log\left(\phi\left(\int_{0}^{t-} h(t-s)dZ_{s}^{\epsilon}\right)\right)dZ_{t}^{\epsilon}
-\int_{0}^{T}\log\left(\phi\left(\int_{0}^{t} h(t-s)d\eta^\epsilon(s)\right)\right)dZ_{t}^{\epsilon}\right|
\\
&\leq\frac{\alpha}{\inf_{x\geq 0}\phi(x)}\left[|h(0)|+\int_{0}^{T}|h'(t)|dt\right]
\sup_{0\leq t\leq T}|Z_{t}^{\epsilon}-\eta^\epsilon(t)|
[\eta^\epsilon(T)+\delta],
\nonumber
\end{align}
and thirdly,
\begin{align}\label{estimate3}
&\left|\int_{0}^{T} \phi\left(\int_{0}^{t}h(t-s)dZ_{s}^{\epsilon}\right) ds
-\int_{0}^{T}\ \phi\left(\int_{0}^{t}h(t-s)d\eta^\epsilon(s)\right)ds\right|
\\
&\leq
T\alpha\left[|h(0)|+\int_{0}^{T}|h'(t)|dt\right]
\sup_{0\leq t\leq T}|Z_{t}^{\epsilon}-\eta^\epsilon(t)|,
\nonumber
\end{align}
and finally,
\begin{align}\label{estimate4}
&\left|\int_{0}^{T}\log\left(\phi\left(\int_{0}^{t} h(t-s)dZ^0(s)\right)\right)dZ_{t}^{\epsilon}
-\int_{0}^{T}\log\left(\phi\left(\int_{0}^{t} h(t-s)dZ^0(s)\right)\right)d\eta^\epsilon(t)\right|
\\
&\leq
\sup_{0\leq t\leq T}|Z_{t}^{\epsilon}-\eta^\epsilon(t)|\frac{\alpha}{\inf_{x\geq 0}\phi(x)}
(|h(0)|+\Vert h'\Vert_{L^{\infty}[0,T]})(Z^0(T)+TZ^0(T))
\nonumber
\\
&\qquad
+\sup_{0\leq t\leq T}|Z_{t}^{\epsilon}-\eta^\epsilon(t)|
\left[|\log\phi(0)|
+\frac{\alpha}{\inf_{x\geq 0}\phi(x)}\Vert h\Vert_{L^{\infty}[0,T]}Z^0(T)\right].
\nonumber
\end{align}
Thus, by \eqref{estimate1}, \eqref{estimate2}, \eqref{estimate3}, and \eqref{estimate4}, 
there exists a constant $C$ such that for any $\sup_{0\leq t\leq T}|Z_t^{\epsilon}-\eta^{\epsilon}(t)|\leq\delta(\epsilon)$,
$$
\begin{aligned}
 &\Bigg|\Bigg[\frac{1}{\epsilon}\int_{0}^{T}\log\left( \frac{\phi\left(\int_{0}^{t-} h(t-s)dZ_{s}^{\epsilon}\right)}{\phi\left(\int_{0}^{t} h(t-s)dZ_s^0\right)}\right)d Z_{t}^{\epsilon}
 \\
 &\qquad\qquad-\frac{1}{\epsilon} \int_{0}^{T} \left(\phi\left(\int_{0}^{t}  h(t-s)dZ_{s}^{\epsilon}\right)-\phi\left(\int_{0}^{t}  h(t-s)dZ_s^0\right)\right) ds\Bigg]\\
&\qquad - \Bigg[ \int_{0}^{T}\log\left( \frac{\phi\left(\int_{0}^{t} h(t-s)d\eta^{\epsilon}(s)\right)}{\phi\left(\int_{0}^{t} h(t-s)dZ_s^0\right)}\right)d \eta^{\epsilon}(t)
\\
&\qquad\qquad - \int_{0}^{T} \left(\phi\left(\int_{0}^{t} h(t-s)d\eta^{\epsilon}(s)\right)-\phi\left(\int_{0}^{t}  h(t-s)dZ_s^0\right)\right) ds\Bigg]\Bigg|\\
 \leq & \frac{\delta a^2(\epsilon) C}{\epsilon}.
 \end{aligned}
$$

Moreover, by a deterministic time change, we have
\begin{equation}
Z^{\epsilon}_{t}-Z^{0}_{t}=Y^{\epsilon}(Z_{t}^{0})=Y^{\epsilon}\left(\int_{0}^{t}\phi\left(\int_{0}^{s} h(s-u)dZ_u^0\right)ds\right),
\end{equation}
where the first equality above holds in distribution, 
where  $Y^{\epsilon}(t):=\epsilon\bar{N}_{t}^{\epsilon}-t$, and $\bar{N}_{t}^{\epsilon}$ is a standard Poisson process
with constant intensity $\frac{1}{\epsilon}$ under the probability measure $\tilde{\mathbb{P}}$.

It is well known that, see e.g. \cite{Mog}, that $\tilde{\mathbb{P}}(\frac{Y^{\epsilon}}{a(\epsilon)}\in\cdot)$ satisfies
a large deviation principle on $D[0,Z_{T}^{0}]$, see e.g. \cite{Mog}, with the speed $\frac{a^{2}(\epsilon)}{\epsilon}$ and the rate function
\begin{equation}
J_{Pos}(\xi)=
\begin{cases}
\frac{1}{2}\int_{0}^{Z_{T}^{0}}|\xi'(t)|^{2}dt  &\text{if $\xi\in\mathcal{AC}_{0}[0,Z_{T}^{0}]$},
\\
+\infty &\text{otherwise}.
\end{cases}
\end{equation}
 
Therefore,
\begin{align*}
&\lim_{\delta \rightarrow 0}\lim_{\epsilon\rightarrow 0} \frac{\epsilon}{a^2(\epsilon)}
\log\tilde{\mathbb{P}}\left(\sup_{0\leq t\leq T}|Z_t^{\epsilon}-\eta^{\epsilon}(t)|\leq\delta(\epsilon)\right)
\\
&=\lim_{\delta \rightarrow 0}\lim_{\epsilon\rightarrow 0} \frac{\epsilon}{a^2(\epsilon)}
\log\tilde{\mathbb{P}}\left(\sup_{0\leq t\leq T}|Z_t^{\epsilon}-Z_{t}^{0}-a(\epsilon)\eta(t)|\leq\delta a(\epsilon)\right)
\\
&=\lim_{\delta \rightarrow 0}\lim_{\epsilon\rightarrow 0} \frac{\epsilon}{a^2(\epsilon)}
\log\tilde{\mathbb{P}}\left(\sup_{0\leq t\leq T}|Y^{\epsilon}(Z_{t}^{0})-a(\epsilon)\eta(t)|\leq\delta a(\epsilon)\right)
\\
&=\lim_{\delta \rightarrow 0}\lim_{\epsilon\rightarrow 0} \frac{\epsilon}{a^2(\epsilon)}
\log\tilde{\mathbb{P}}\left(\sup_{0\leq t\leq Z_{T}^{0}}|Y^{\epsilon}(t)-a(\epsilon)\xi(t)|\leq\delta a(\epsilon)\right)
\\
&=J_{Pos}(\xi),
\end{align*}
where $\xi(\cdot)$ is defined via $\xi(Z_{t}^{0})=\eta(t)$, for every $0\leq t\leq T$, so that if $\eta\not\in \mathcal {AC}_0[0,T]$, then  $J_{Pos}(\xi)=+\infty$, and if $\eta\in \mathcal {AC}_0[0,T]$, then $\eta'(t)=(Z_{t}^{0})'\xi'(Z_{t}^{0})$ and
\begin{equation*}
\int_{0}^{Z_{T}^{0}}|\xi'(t)|^{2}dt
=\int_{0}^{T}|\xi'(Z_{t}^{0})|^{2}dZ_{t}^{0}
=\int_{0}^{T}\frac{|\eta'(t)|^{2}}{(Z_{t}^{0})'}dt
=\int_{0}^{T} \frac{|\eta'(t)|^2}{\phi\left(\int_{0}^{t} h(t-s)dZ_s^0\right)}dt.
\end{equation*}

Thus, we have
\begin{align*}
&\lim_{\delta \rightarrow 0}\lim_{\epsilon\rightarrow 0} \frac{\epsilon}{a^2(\epsilon)}
\log\tilde{\mathbb{P}}\left(\sup_{0\leq t\leq T}|Z_t^{\epsilon}-\eta^{\epsilon}(t)|\leq\delta(\epsilon)\right)
\\
&=\begin{cases}
-\frac{1}{2}\int_{0}^{T} \frac{|\eta'(t)|^2}{\phi\left(\int_{0}^{t} h(t-s)dZ_s^0\right)}dt  
&\text{if $\xi\in\mathcal{AC}_{0}[0,Z_{T}^{0}]$},
\\
-\infty &\text{otherwise}.
\end{cases}      
\end{align*}

Finally, we notice that
$$\begin{aligned}
&  \lim_{\epsilon\rightarrow 0}\frac{1}{a^2(\epsilon)}\int_{0}^{T}\log\left(\frac{\phi\left(\int_{0}^{t}h(t-s)d\eta^{\epsilon}(s)\right)}{\phi\left(\int_{0}^{t}  h(t-s)dZ_s^0\right)}\right)d\eta^{\epsilon}(t) \\
&\qquad-\lim_{\epsilon\rightarrow 0}\frac{1}{a^2(\epsilon)}\int_{0}^{T} \left(\phi\left(\int_{0}^{t} h(t-s)d\eta^{\epsilon}(s)\right)-\phi\left(\int_{0}^{t}  h(t-s)dZ_s^0\right)\right) dt\\
 &=\int_{0}^{T}\frac{\phi'\left(\int_{0}^{t}h(t-s)dZ_s^0\right)\int_{0}^{t}h(t-s)d\eta(s)}{\phi\left(\int_{0}^{t}  h(t-s)dZ_s^0\right)}d\eta(t)\\
  &\qquad -\frac{1}{2}\int_{0}^{T}\frac{|\phi'\left(\int_{0}^{t}h(t-s)dZ_s^0\right)\int_{0}^{t}h(t-s)d\eta(s)|^2}{\phi\left(\int_{0}^{t}  h(t-s)dZ_s^0\right)} dt.
\\
\end{aligned}$$
Hence, we conclude that
$$\begin{aligned}
&\lim_{\delta \rightarrow 0}\lim_{\epsilon\rightarrow 0}
\frac{\epsilon}{a^2(\epsilon)}
\log\tilde{\mathbb{E}}\Bigg[e^{ \frac{1}{\epsilon}\int_{0}^{T}\log\left( \frac{\phi\left(\int_{0}^{t-}  h(t-s)dZ_{s}^{\epsilon}\right)}{\phi\left(\int_{0}^{t} h(t-s)dZ_s^0\right)}\right)d Z_{t}^{\epsilon}
 -\frac{1}{\epsilon} \int_{0}^{T} \left(\phi\left(\int_{0}^{t}  h(t-s)dZ_{s}^{\epsilon}\right)-\phi\left(\int_{0}^{t}  h(t-s)dZ_s^0\right)\right) dt}\\
&\qquad\qquad\qquad\qquad 1_{\sup_{0\leq t\leq T}|Z_t^{\epsilon}-\eta^{\epsilon}(t)|\leq\delta(\epsilon)}\Bigg]\\
&= \lim_{\epsilon\rightarrow 0}\frac{1}{a^2(\epsilon)}\int_{0}^{T}\log\left(\frac{\phi\left(\int_{0}^{t}h(t-s)d\eta^{\epsilon}(s)\right)}{\phi\left(\int_{0}^{t}  h(t-s)dZ_s^0\right)}\right)d\eta^{\epsilon}(t) \\
&\qquad\qquad-\lim_{\epsilon\rightarrow 0}\frac{1}{a^2(\epsilon)}\int_{0}^{T} \left(\phi\left(\int_{0}^{t} h(t-s)d\eta^{\epsilon}(s)\right)-\phi\left(\int_{0}^{t}  h(t-s)dZ_s^0\right)\right) dt\\
&\qquad\qquad\quad+\lim_{\delta \rightarrow 0}\lim_{\epsilon\rightarrow 0} \frac{\epsilon}{a^2(\epsilon)}
\log\tilde{\mathbb{P}}\left(\sup_{0\leq t\leq T}|Z_t^{\epsilon}-\eta^{\epsilon}(t)|\leq\delta(\epsilon)\right)\\
&=-J(\eta).
\end{aligned}$$

Hence, the conclusion of the Theorem \ref{localMDPThm} holds.
\end{proof}

%%%%%%%%%%%%%%%%%%%%%%%%%%%%%%%%%%%%%%%%%%%%%%%%%%%%%%%%%%%%%%%%%%%%
%%%%%%%%%%%%%%%%%%%%%%%%%%%%%%%%%%%%%%%%%%%%%%%%%%%%%%%%%%%%
\begin{proof}[Proof of Lemma \ref{MDPexptightI}]
Let us recall that $Z_{t}^{\epsilon}$ satisfies the dynamics:
\begin{align}
Z_{t}^{\epsilon}
&=\int_{0}^{t}\int_{0}^{\infty}1_{[0,\phi(\int_{0}^{s}h(s-u)dZ_{u}^{\epsilon})]}(z)\epsilon\pi^{\epsilon^{-1}}(dzds)
\\
&=M_{t}^{\epsilon}+\int_{0}^{t}\phi\left(\int_{0}^{s}h(s-u)dZ_{u}^{\epsilon}\right)ds,
\nonumber
\end{align}
where $M_{t}^{\epsilon}$ is a martingale. 
Therefore, for any $0\leq t\leq T$,
\begin{align}
|Z_{t}^{\epsilon}-Z_{t}^{0}|
&\leq\sup_{0\leq t\leq T}|M_{t}^{\epsilon}|
+\int_{0}^{t}\left|\phi\left(\int_{0}^{s}h(s-u)dZ_{u}^{\epsilon}\right)-
\phi\left(\int_{0}^{s}h(s-u)dZ_{u}^{0}\right)\right|ds
\\
&\leq
\sup_{0\leq t\leq T}|M_{t}^{\epsilon}|
+\alpha\left[|h(0)|+\int_{0}^{T}|h'(t)|dt\right]
\int_{0}^{t}\sup_{0\leq u\leq s}|Z_{u}^{\epsilon}-Z_{u}^{0}|ds.
\nonumber
\end{align}
It follows from Gronwall's inequality that
\begin{equation}
\sup_{0\leq t\leq T}|Z_{t}^{\epsilon}-Z_{t}^{0}|
\leq\sup_{0\leq t\leq T}|M_{t}^{\epsilon}|
e^{\alpha\left[|h(0)|+\int_{0}^{T}|h'(t)|dt\right]T}.
\end{equation}
For any $\theta>0$, by Doob's martingale inequality,
\begin{align}
&\mathbb{P}\left(\sup_{0\leq t\leq T}|Z_{t}^{\epsilon}-Z_{t}^{0}|\geq Ka(\epsilon)\right)
\\
&\leq\mathbb{P}\left(\sup_{0\leq t\leq T}|M_{t}^{\epsilon}|
e^{\alpha\left[|h(0)|+\int_{0}^{T}|h'(t)|dt\right]T}\geq Ka(\epsilon)\right)
\nonumber
\\
&\leq\mathbb{P}\left(\sup_{0\leq t\leq T}M_{t}^{\epsilon}
e^{\alpha\left[|h(0)|+\int_{0}^{T}|h'(t)|dt\right]T}\geq Ka(\epsilon)\right)
\nonumber
\\
&\qquad\qquad\qquad
+\mathbb{P}\left(\sup_{0\leq t\leq T}(-M_{t}^{\epsilon})
e^{\alpha\left[|h(0)|+\int_{0}^{T}|h'(t)|dt\right]T}\geq Ka(\epsilon)\right)
\nonumber
\\
&\leq\mathbb{E}\left[e^{\frac{a(\epsilon)}{\epsilon}\theta
e^{\alpha\left[|h(0)|+\int_{0}^{T}|h'(t)|dt\right]T}M_{T}^{\epsilon}}\right]e^{-\theta K\frac{a(\epsilon)^{2}}{\epsilon}}
+\mathbb{E}\left[e^{-\frac{a(\epsilon)}{\epsilon}\theta
e^{\alpha\left[|h(0)|+\int_{0}^{T}|h'(t)|dt\right]T}M_{T}^{\epsilon}}\right]e^{-\theta K\frac{a(\epsilon)^{2}}{\epsilon}}.
\nonumber
\end{align}
Let us define
\begin{equation}
R_{t}^{\epsilon}=\frac{a(\epsilon)}{\epsilon}\theta
e^{\alpha\left[|h(0)|+\int_{0}^{T}|h'(t)|dt\right]T}M_{t}^{\epsilon}.
\end{equation}
Then $R_{t}^{\epsilon}$ is a martingale and $R_{0}^{\epsilon}=0$.
Moreover, $|\Delta M^{\epsilon}|\leq\epsilon$
and 
\begin{equation}
|\Delta R^{\epsilon}|\leq c:=a(\epsilon)\theta
e^{\alpha\left[|h(0)|+\int_{0}^{T}|h'(t)|dt\right]T}.
\end{equation}
By Lemma 26.19 in Kallenberg \cite{Kallenberg}, 
if $M$ is a local martingale starting at $0$ with $|\Delta M|\leq c$
then $e^{M-b\langle M\rangle}$ is a supermartingale 
where $b=g(c):=(e^{c}-1-c)c^{-2}$. Hence,
\begin{align}
\mathbb{E}[e^{R_{T}^{\epsilon}}]
&=\mathbb{E}[e^{R_{T}^{\epsilon}-\frac{1}{2}g(2c)\langle 2R^{\epsilon}\rangle_{T}}e^{\frac{1}{2}g(2c)\langle R^{\epsilon}\rangle_{T}}]
\\
&\leq\mathbb{E}\left[e^{2R_{T}^{\epsilon}-g(2c)\langle 2R^{\epsilon}\rangle_{T}}\right]^{1/2}
\mathbb{E}\left[e^{g(2c)\langle 2R^{\epsilon}\rangle_{T}}\right]^{1/2}
\nonumber
\\
&\leq\mathbb{E}\left[e^{4g(2c)\langle R^{\epsilon}\rangle_{T}}\right]^{1/2}.
\nonumber
\end{align}
Similarly,
\begin{equation}
\mathbb{E}[e^{-R_{T}^{\epsilon}}]
\leq\mathbb{E}\left[e^{4g(2c)\langle R^{\epsilon}\rangle_{T}}\right]^{1/2}.
\end{equation}
As $\epsilon\rightarrow 0$, it is easy to see that $c\rightarrow 0$
and $g(2c)\rightarrow\frac{1}{2}$. 
Therefore, for sufficiently small $\epsilon$,
\begin{align}\label{Rineq}
\mathbb{E}[e^{R_{T}^{\epsilon}}]
&\leq\mathbb{E}\left[e^{4\langle R^{\epsilon}\rangle_{T}}\right]^{1/2}
\\
&=\mathbb{E}\left[e^{4\frac{a(\epsilon)^{2}}{\epsilon^{2}}\theta^{2}
e^{2\alpha\left[|h(0)|+\int_{0}^{T}|h'(t)|dt\right]T}
\epsilon\int_{0}^{T}\phi(\int_{0}^{t}h(t-s)dZ_{s}^{\epsilon})dt}\right]^{1/2}
\nonumber
\\
&\leq\mathbb{E}\left[e^{4\frac{a(\epsilon)^{2}}{\epsilon^{2}}\theta^{2}
e^{2\alpha\left[|h(0)|+\int_{0}^{T}|h'(t)|dt\right]T}
\epsilon(\phi(0)T+\alpha\epsilon\int_{0}^{T}\int_{0}^{t}|h(t-s)|dN_{s}^{\epsilon}dt)}\right]^{1/2}
\nonumber
\\
&\leq
e^{2\frac{a(\epsilon)^{2}}{\epsilon}\theta^{2}
e^{2\alpha\left[|h(0)|+\int_{0}^{T}|h'(t)|dt\right]T}\phi(0)T}
\mathbb{E}\left[e^{4a(\epsilon)^{2}\theta^{2}
e^{2\alpha\left[|h(0)|+\int_{0}^{T}|h'(t)|dt\right]T}
\alpha\Vert h\Vert_{L^{\infty}[0,T]}TN_{T}^{\epsilon}}\right]^{1/2}
\nonumber
\\
&\leq
e^{2\frac{a(\epsilon)^{2}}{\epsilon}\theta^{2}
e^{2\alpha\left[|h(0)|+\int_{0}^{T}|h'(t)|dt\right]T}\phi(0)T}
e^{\frac{1}{2}C\left(4a(\epsilon)^{2}\theta^{2}
e^{2\alpha\left[|h(0)|+\int_{0}^{T}|h'(t)|dt\right]T}
\alpha\Vert h\Vert_{L^{\infty}[0,T]}T\right)\frac{1}{\epsilon}},
\nonumber
\end{align}
where $C(\theta)$ for small $\theta>0$ is defined in the proof of 
the exponential tightness for the large deviation principle
under the assumption that $\alpha\Vert h\Vert_{L^{1}[0,T]}<1$
and the fact that $a(\epsilon)^{2}$ is sufficiently small as $\epsilon\rightarrow 0$.
It is easy to check that for some $\bar{C}>0$
\begin{equation}
\bar{C}:=\limsup_{\theta\rightarrow 0}\frac{C(\theta)}{\theta}<\infty.
\end{equation}
Therefore, we conclude that
\begin{align}
&\limsup_{\epsilon\rightarrow 0}
\frac{\epsilon}{a(\epsilon)^{2}}\log\mathbb{P}\left(\sup_{0\leq t\leq T}|Z_{t}^{\epsilon}-Z_{t}^{0}|\geq Ka(\epsilon)\right)
\\
&\leq
2\theta^{2}
e^{2\alpha\left[|h(0)|+\int_{0}^{T}|h'(t)|dt\right]T}\phi(0)T
+2\theta^{2}
e^{2\alpha\left[|h(0)|+\int_{0}^{T}|h'(t)|dt\right]T}
\alpha\Vert h\Vert_{L^{\infty}[0,T]}T\bar{C}
-\theta K.
\nonumber
\end{align}
The desired result follows by letting $K\rightarrow\infty$.
\end{proof}

%%%%%%%%%%%%%%%%%%%%%%%%%%%%%%%%%%%%%%%%%%%%%%%%%%%%%%%%%%%%%%%%%
\begin{proof}[Proof of Lemma \ref{MDPexptightII}]
Without loss of generality, let us assume that $MT\in\mathbb{N}$.
For any $\delta>0$,
\begin{align} 
&\mathbb{P}\left(\sup_{0\leq s\leq t\leq T,
|t-s|\leq\frac{1}{M}}|Z_{t}^{\epsilon}-Z_{t}^{0}-Z_{s}^{\epsilon}+Z_{s}^{0}|\geq\delta a(\epsilon)\right)
\\
&\leq
\mathbb{P}\left(\exists j, 1\leq j\leq MT:
\left|Z_{\frac{j}{M}}^{\epsilon}-Z_{\frac{j-1}{M}}^{\epsilon}
-Z_{\frac{j}{M}}^{0}+Z_{\frac{j-1}{M}}^{0}\right|\geq\frac{\delta a(\epsilon)}{2}\right)
\\
&\leq
\sum_{j=1}^{MT}\mathbb{P}\left(
\left|Z_{\frac{j}{M}}^{\epsilon}-Z_{\frac{j-1}{M}}^{\epsilon}
-Z_{\frac{j}{M}}^{0}+Z_{\frac{j-1}{M}}^{0}\right|\geq\frac{\delta a(\epsilon)}{2}\right).
\nonumber
\end{align}
We can estimate that
\begin{align}
&\left|Z_{\frac{j}{M}}^{\epsilon}-Z_{\frac{j-1}{M}}^{\epsilon}
-Z_{\frac{j}{M}}^{0}+Z_{\frac{j-1}{M}}^{0}\right|
\\
&\leq|M_{j/M}^{\epsilon}-M_{(j-1)/M}^{\epsilon}|
+\int_{\frac{j-1}{M}}^{\frac{j}{M}}\left|
\phi\left(\int_{0}^{s}h(s-u)dZ_{u}^{\epsilon}\right)
-\phi\left(\int_{0}^{s}h(s-u)dZ_{u}^{0}\right)\right|ds
\nonumber
\\
&\leq|M_{j/M}^{\epsilon}-M_{(j-1)/M}^{\epsilon}|
+\frac{1}{M}\alpha\left[|h(0)|+\int_{0}^{T}|h'(t)|dt\right]
\sup_{0\leq t\leq T}|Z_{t}^{\epsilon}-Z_{t}^{0}|.
\nonumber
\end{align}
Thus,
\begin{align}
&\mathbb{P}\left(
\left|Z_{\frac{j}{M}}^{\epsilon}-Z_{\frac{j-1}{M}}^{\epsilon}
-Z_{\frac{j}{M}}^{0}+Z_{\frac{j-1}{M}}^{0}\right|\geq\frac{\delta a(\epsilon)}{2}\right)
\\
&\leq
\mathbb{P}\left(
\left|M_{j/M}^{\epsilon}-M_{(j-1)/M}^{\epsilon}\right|\geq\frac{\delta a(\epsilon)}{2}\right)
\nonumber
\\
&\qquad\qquad\qquad
+\mathbb{P}\left(
\frac{1}{M}\alpha\left[|h(0)|+\int_{0}^{T}|h'(t)|dt\right]
\sup_{0\leq t\leq T}|Z_{t}^{\epsilon}-Z_{t}^{0}|\geq\frac{\delta a(\epsilon)}{2}\right).
\nonumber
\end{align}
We can compute that for any $\theta>0$, for sufficiently small $\epsilon>0$,
\begin{align}
&\mathbb{P}\left(
\left|M_{j/M}^{\epsilon}-M_{(j-1)/M}^{\epsilon}\right|\geq\frac{\delta a(\epsilon)}{2}\right)
\\
&\leq\mathbb{P}\left(
\sup_{(j-1)/M\leq t\leq j/M}(M_{t}^{\epsilon}-M_{(j-1)/M}^{\epsilon})\geq\frac{\delta a(\epsilon)}{2}\right)
\nonumber
\\
&\qquad\qquad
+\mathbb{P}\left(
\sup_{(j-1)/M\leq t\leq j/M}(-M_{t}^{\epsilon}+M_{(j-1)/M}^{\epsilon})\geq\frac{\delta a(\epsilon)}{2}\right)
\nonumber
\\
&\leq\left(\mathbb{E}\left[e^{\theta\frac{a(\epsilon)}{\epsilon}(M_{j/M}^{\epsilon}-M_{(j-1)/M}^{\epsilon})}\right]
+\mathbb{E}\left[e^{\theta\frac{1}{\delta}\frac{a(\epsilon)}{\epsilon}(M_{(j-1)/M}^{\epsilon}-M_{j/M}^{\epsilon})}\right]\right)
e^{-\theta\frac{\delta}{2}\frac{a(\epsilon)^{2}}{\epsilon}}
\nonumber
\\
&\leq 
2\mathbb{E}\left[e^{4\theta^{2}
\frac{a(\epsilon)^{2}}{\epsilon^{2}}
\epsilon\int_{\frac{j-1}{M}}^{\frac{j}{M}}\phi(\int_{0}^{s}h(s-u)dZ_{u}^{\epsilon})ds}\right]^{1/2}
e^{-\theta\frac{\delta}{2}\frac{a(\epsilon)^{2}}{\epsilon}},
\nonumber
\end{align}
where the last line uses \eqref{Rineq}. From here, we can further estimate that
\begin{align}
&\mathbb{P}\left(
\left|M_{j/M}^{\epsilon}-M_{(j-1)/M}^{\epsilon}\right|\geq\frac{\delta a(\epsilon)}{2}\right)
\\
&\leq
2e^{4\theta^{2}
\frac{a(\epsilon)^{2}}{\epsilon}\frac{1}{M}\phi(0)}
\mathbb{E}\left[e^{2\theta^{2}
a(\epsilon)^{2}\frac{1}{M}\alpha\Vert h\Vert_{L^{\infty}[0,T]}N_{T}^{\epsilon}}\right]^{1/2}
e^{-\theta\frac{\delta}{2}\frac{a(\epsilon)^{2}}{\epsilon}},
\nonumber
\end{align}
which is uniform in $j$. Moreover,
\begin{align}
&\limsup_{\epsilon\rightarrow 0}\frac{\epsilon}{a(\epsilon)^{2}}
\log\mathbb{P}\left(
\left|M_{j/M}^{\epsilon}-M_{(j-1)/M}^{\epsilon}\right|\geq\frac{\delta a(\epsilon)}{2}\right)
\\
&\leq
2\theta^{2}\frac{1}{M}\phi(0)
+2\theta^{2}\bar{C}\frac{1}{M}\alpha\Vert h\Vert_{L^{\infty}[0,T]}
-\theta\frac{\delta}{2}.
\nonumber
\end{align}
The choice of $\theta>0$ is arbitrary. Let us choose $\theta=\sqrt{M}$, then
\begin{equation}
\limsup_{M\rightarrow\infty}
\limsup_{\epsilon\rightarrow 0}\frac{\epsilon}{a(\epsilon)^{2}}
\log\mathbb{P}\left(
\left|M_{j/M}^{\epsilon}-M_{(j-1)/M}^{\epsilon}\right|\geq\frac{\delta a(\epsilon)}{2}\right)
=-\infty.
\end{equation}
Finally, by Lemma \ref{MDPexptightI}, 
\begin{equation}
\limsup_{M\rightarrow\infty}
\limsup_{\epsilon\rightarrow 0}\frac{\epsilon}{a(\epsilon)^{2}}
\log\mathbb{P}\left(
\frac{1}{M}\alpha\left[|h(0)|+\int_{0}^{T}|h'(t)|dt\right]
\sup_{0\leq t\leq T}|Z_{t}^{\epsilon}-Z_{t}^{0}|\geq\frac{\delta a(\epsilon)}{2}\right)
=-\infty.
\end{equation}
Hence, we have proved the desired result.
\end{proof}

%%%%%%%%%%%%%%%%%%%%%%%%%%%%%%%%%%%%%%%%%%%%%%%%%%%%%%%%%%%%%%%%%%%%

\section*{Acknowledgements}
We are very grateful to the Associate Editor and two anonymous referees
for their helpful comments and suggestions.
Fuqing Gao acknowledges support from NSFC Grant 11571262 
and the Specialized Research Fund for the Doctoral Program of Higher Education of China (Grant No. 20130141110076).
Lingjiong Zhu is grateful to the support from NSF Grant DMS-1613164.

%%%%%%%%%%%%%%%%%%%%%%%%%%%%%%%%%%%%%%%%%%%%%%%%%%%%%%%%%%%%%%%%%%%%

\end{document}